\documentclass[11pt]{amsart}
\usepackage{graphicx}
\usepackage{amsmath}
\usepackage{color}
\usepackage{amsfonts}
\usepackage{amssymb}
\usepackage[utf8]{inputenc}
\usepackage[T1]{fontenc}
\usepackage{amsfonts,enumerate}
\usepackage{amsmath}
\usepackage{amssymb}
\usepackage{amsthm}
\usepackage{graphics}
\usepackage[backref = page]{hyperref}
\usepackage{tikz}
\usepackage{geometry}
\usepackage{amsfonts}
\usepackage{graphicx}
\usepackage{genyoungtabtikz}
\usepackage{dsfont}
\usepackage{mathrsfs}
\usepackage{ytableau}
\usepackage{tcolorbox}
\usepackage{color}
\usepackage{makecell}
\usepackage{longtable}
\usepackage[capitalize]{cleveref}
\usepackage{dynkin-diagrams}
\usepackage{float}%
\providecommand{\U}[1]{\protect\rule{.1in}{.1in}}

\hypersetup{
	colorlinks =  true,
	linkcolor = purple!80,
	citecolor = teal
}
\newcommand{\la}{\lambda}

\allowdisplaybreaks

\providecommand{\U}[1]{\protect\rule{.1in}{.1in}}
\providecommand{\U}[1]{\protect\rule{.1in}{.1in}}
\providecommand{\U}[1]{\protect\rule{.1in}{.1in}}
\providecommand{\U}[1]{\protect\rule{.1in}{.1in}}
\providecommand{\U}[1]{\protect\rule{.1in}{.1in}}
\providecommand{\U}[1]{\protect\rule{.1in}{.1in}}
\providecommand{\U}[1]{\protect\rule{.1in}{.1in}}
\theoremstyle{plain}
\newtheorem{theorem}{Theorem}[section]

\newtheorem{lemma}[theorem]{Lemma}
\newtheorem{proposition}[theorem]{Proposition}

\theoremstyle{definition}

\newtheorem{definition}[theorem]{Definition}
\newtheorem{example}[theorem]{Example}

\newtheorem{remark}[theorem]{Remark}

\makeatletter
\def\figcaption{%
\H@refstepcounter{figure}%
\@dblarg{\@caption{figure}}}
\makeatother

\makeatletter
\def\tablecaption{%
\H@refstepcounter{table}%
\@dblarg{\@caption{table}}}
\makeatother

\definecolor{mygreen}{RGB}{3,160,74}

\begin{document}
\sloppy
\renewcommand{\arraystretch}{1.5}

\title{Quantized Howe-type dualities via Koornwinder polynomials and the $X=K$ phenomenon}

\begin{abstract}
We derive the equality between one-dimensional sums associated with tensor
products of Kirillov-Reshetikhin column crystals of classical affine types and
Lusztig $q$-analogues of weight multiplicities. The matching of the corresponding root systems is suggested by Howe duality. Our main tool is the dual
Cauchy formula for Koornwinder polynomials due to Mimachi, which we combine with specializations in these polynomials. 
The mentioned dualities  are proved for  one-dimensional sums of all (twisted and untwisted) classical affine types except types $B_{n}^{(1)}$ and $D_{n}^{(1)}$. On another hand, all the Lusztig $q$-analogues of classical type are covered by our dualities, but they may have different parameters depending on the length of the roots in the underlying root system.
\end{abstract}

\author{Thomas Gerber}
\address{Institut Camille Jordan, Lyon 1 University, France}
\email{gerber@math.univ-lyon1.fr}

\author{Bogdan Ion}
\address{Department of Mathematics, University of Pittsburgh, USA}
\email{bion@pitt.edu}

\author{C\'{e}dric Lecouvey}
\address{Institut Denis Poisson, University of Tours, France}
\email{Cedric.Lecouvey@univ-tours.fr}

\author{Cristian Lenart}
\address{Department of Mathematics and Statistics,
State University of New York at Albany}
\email{clenart@albany.edu}

\maketitle

\section*{Introduction}

Lusztig's $q$-weight multiplicities are deformations of the usual weight multiplicities for complex simple Lie algebras~\cite{Lusztig1983}. 
They are also known as (generalized) Kostka-Foulkes (KF) polynomials, 
as in type $A$ they coincide with the usual KF polynomials. The numerous connections of these polynomials with many fundamental structures in representation theory make them particularly fascinating and challenging objects of study.   
Being affine Kazhdan-Lusztig polynomials \cite{Kato1982, Lusztig1983}, the $q$-weight multiplicities have non-negative integer coefficients,
but a combinatorial proof of this property is only known in type $A$ in full generality \cite{LS1978}, and relies on a statistic on semistandard Young tableaux called charge.
In fact, the approach in \cite{LS1978} was twofold, in the sense that a combinatorial proof of the positivity,
based on rank recursion (the so-called Morris recurrence formula), was given simultaneously with the combinatorial description in terms of semistandard tableaux. 
In other types, numerous partial results of this difficult problem have been established, but such a general picture remains incomplete for a general root system. 
We refer the reader, for example, to \cite{LN,JangKwon} for a description of the Kostka-Foulkes polynomials in classical types associated to the weight $0$ based on the combinatorics of generalized exponents, and for more historical background about the combinatorics of the Kostka-Foulkes polynomials with relevant references. 
We also mention~\cite{DGT,PT}, in which a charge statistic is shown to exist in type $C$ for row shapes and in rank 2, respectively.  

\medskip

The goal of this paper is to equate Kostka-Foulkes polynomials
(denoted $K_{\lambda,\mu}(q)$) with so-called one-dimensional sums (denoted $X_{\lambda,\mu}(q)$), 
which are polynomials in $q$ obtained as generating functions of the energy on the set of classical highest weight vertices of certain finite affine crystals known as Kirillov-Reshetikhin (KR) crystals.
This provides a duality between $q$-weight multiplicities and graded tensor product multiplicities, which, given the notation, is referred to as $X=K$.
The first $X=K$ result dates back to 1997, when Nakayashiki and Yamada \cite{NY1997} gave a combinatorial description of the Kostka-Foulkes polynomials of type $A$ by establishing a relationship between the Lascoux-Sch\"utzenberger charge and the energy function on a certain tensor product of column shape Kirillov-Reshetikhin crystals of affine type $A$.
In fact, these two statistics coincide up to a Howe-type duality, see \cite[Section 2.8]{GL2023} for a more modern viewpoint.
Later, a further $X=K$ result was derived in~\cite{LOS}, for all classical affine types in a stable limit. More precisely, it is shown that, if the two partitions $\lambda,\mu$ are fixed, and the rank $N$ of the corresponding root system goes to infinity, then the corresponding one-dimensional sum stabilizes (i.e., it does not depend on $N$). It is then proved that there are only four stable limits, which correspond to the following affine types: $A_N^{(1)}$, $C_N^{(1)}$, $D_N^{(1)}$, and $D_{N+1}^{(2)}$. Finally, the stable limits are realized as certain parabolic Lusztig $q$-weight multiplicities of the corresponding finite classical types. Much more recently, in \cite{CKL2024}, the Kostka-Foulkes polynomials of type $C_n$ (for any pair of dominant weights) and type $B_n$ (for any pair of spin dominant weights) are related to certain stable one-dimensional sums. Other versions of Lusztig $q$-weight multiplicities are also considered. 

\medskip

In this paper, we establish remarkable identities between the Lusztig $q$-weight multiplicities of all classical types and one-dimensional sums arising from the energy function on column Kirillov-Reshetikhin crystals of all (twisted and untwisted) classical affine types except type $B_n^{(1)}$ and $D_n^{(1)}$. 
More precisely, we prove that, for any  partitions $\la, \mu$ with at most $n$ rows and $m$ columns, we have
\begin{equation}\label{main-dual} X_{\la,\mu}(q) = K_{\widehat\la,\widehat\mu}(q) \,;\end{equation}
here 
$\widehat\la = (n-\la'_m, \ldots, n-\la'_1)$, with $\la'$ being the conjugate of $\la$, apart from affine type $A$, where such an identity is known, but we need to set $\widehat\la=\la'$. 
The matching of the corresponding root systems is suggested by Howe duality, and in fact the matched ranks are related to $m$ (for the $q$-weight multiplicities) and $n$ (for the one-dimensional sums). 
The general correspondence is more subtle than in type $A$ and, in some cases, it involves Kostka-Foulkes polynomials with unequal parameters. 
Furthermore, we sometimes need to change the labeling of the 0-node in the affine Dynkin diagram;  
although this operation does not change the affine root system up to isomorphism, it changes the one-dimensional sum considered. 
\Cref{table} illustrates the correspondence underlying the quantized duality results established in this paper. 
\begin{table}[H]
\centering
\begin{tabular}
[c]{lll}
\hline
Type of Kostka-Foulkes polynomial & Kostka-Foulkes parameter & Type of one-dimensional sum
\\
\hline
$A_{m-1}$ & $q$ & $A_{n-1}^{(1)}$\\
$C_{m}$ &$(q,q)$ & $A_{2n-1}^{(2)}$\\
$B_{m}$, integer weights & $(-q,q^{2})$ & $D_{n+1}^{(2)}$\\
$B_{m}$, half-integer weights& $(q,q^{2})$ & $A_{2n}^{(2)}%
$\\
$D_{m}$, integer weights &  $q$ & $A_{2n-1}^{(2,\dagger)}$\\
$D_{m}$, half-integer weights & $q$ & $A_{2n}^{(2,\dagger)}%
$\\
$C_{m}$ & $(0,q)$ & $C_{n}^{(1)}$\\
\hline
\\
\end{tabular}    
\tablecaption{Matching types between Kostka-Foulkes polynomials and one-dimensional sums.}
\label{table}
\end{table}

We discuss some computational applications of our results, while referring to Section~\ref{sec:final} for additional ones. We can derive a combinatorial description of the classical Kostka-Foulkes polynomials in terms of the corresponding affine crystals, cf.~\Cref{table}. Indeed, while the definition of the energy function via local energies is impractical, it was shown in~\cite{LNSSS} that, on a tensor product of column shape Kirillov-Reshetikhin crystals, this function can be computed very explicitly in terms of a type-independent combinatorial model known as the quantum alcove model. This is based on a directed graph on the corresponding Weyl group known as the quantum Bruhat graph. Moreover, in~\cite{Lenart,LeSc} it was shown that, in all classical types, the mentioned computations can be pushed to the corresponding (type-specific) tableau models (based on Kashiwara-Nakashima columns). In fact, in~\cite{Lenart} it was also shown that, by applying the same procedure in type $A$, one easily rederives the Lascoux-Sch\"{u}tzenberger charge statistic on semistandard tableaux.

We will now compare the results obtained in the present paper and those in~\cite{CKL2024}. 
\begin{enumerate}
\item \Cref{table} permits to equate, for any pair of dominant weights of a given classical root system 
(partitions or half-integer partitions in orthogonal types), its associated Kostka-Foulkes polynomial with a one-dimensional sum. \cite{CKL2024} covers the two types of classical Kostka-Foulkes polynomials mentioned above. 
Our level of generality sometimes requires us to consider Kostka-Foulkes polynomials with unequal parameters, possibly negative. In fact, the $(q,t)$-Kostka-Foulkes polynomials of type $B$ corresponding to a pair of half-integer partitions (spin weights) were already considered in~\cite{CKL2024}, whereas we consider their specialization at $t=q^2$. 
\item Our dualities are direct $X=K$ ones, whereas those in~\cite{CKL2024} sometimes involve only a certain part of a one-dimensional sum, which is identified via subtle combinatorics.
\item The duality results in \cite{CKL2024} hold for one-dimensional sums considered in large rank. 
In our work, we match root systems of arbitrary finite ranks (as suggested by Howe duality), without any assumption on these ranks being large. 
In order to understand the relationship between these two types of dualities, first recall from~\cite{LOS} that, for $n$ large enough, 
the one-dimensional sums of type $B_n^{(1)}$ and $D_{n+1}^{(2)}$ (those appearing in~\cite{CKL2024})
coincide respectively with those of type $A_{2n-1}^{(2)}$ and $A_{2n}^{(2)}$ (appearing in the present paper).
When this happens, while increasing $n$ and keeping $m$ fixed in~\eqref{main-dual}, we recover the results of \cite{CKL2024} involving certain, but not all, Kostka-Foulkes  polynomials of type $C$ and type $B$ (for spin weights and $t=q^2$).
\item We only need to consider one-dimensional sums associated to tensor products of column shape Kirillov-Reshetikhin-crystals. Tensor products of row shape Kirillov-Reshetikhin crystals are also considered in \cite{CKL2024} in the stable case, in relation to so-called level-restricted $q$-weight multiplicities.
% Although there is a similarity between the results in~\cite{CKL2024} and those in this paper for the Kostka-Foulkes polynomials of types $C$ and $B$ (in the latter case, for half-integer weights), there is no direct implication in either direction. 
\item The methods used in this paper are completely different from those in  \cite{CKL2024}, which are based on the intricate tableau combinatorics in classical types and the Morris-type recurrence formulas for the Kostka-Foulkes polynomials in~\cite{Lecouvey2006}. Our approach relies on properties of Macdonald-Koornwinder polynomials, which lead to more concise and conceptual proofs. 
\end{enumerate}

More precisely, our proofs are based on known Cauchy-type identities and connections between Macdonald polynomials specialized at $t=0$ and Kirillov-Reshetikhin crystals. 
In particular, we rederive Nakayashiki and Yamada's type $A$ result directly from the theory of Macdonald polynomials, without using any combinatorial description of the charge or the energy statistic.

\medskip

The structure of the paper is as follows.
In \Cref{sec_gen} we recall  the background on root systems and the Weyl  characters relevant for our purposes. \Cref{sec_Koornwinder} is devoted to Koornwinder polynomials, the dual Cauchy formula they satisfy, as well as their connections with Macdonald polynomials, Hall-Littlewood polynomials, affine Demazure characters, and Weyl module characters. Then, in~\Cref{sec_type_A}, we explain the way in which the classical type $A$ identity equating Kostka-Foulkes polynomials and one-dimensional sums  \cite{NY1997} can be recovered based on the usual Cauchy identities (for Schur functions, Hall-Littlewood polynomials, and Macdonald polynomials). This section is relatively independent of the others and should help the reader understand our general strategy. In \Cref{sec_type_C}, we adapt the previous ideas in order to derive our main result in type $C_{n}$. The case of Kostka-Foulkes polynomials of type $D_m$ parametrized by a pair of partitions is examined in \Cref{type_D_int}. In \Cref{sec_type_BD}, we use Kostka-Foulkes polynomials of type $B_m$ with  unequal parameters indexed by a pair of half-integer partitions or a pair of partitions, and relate them to certain one-dimensional sums. Here the second case requires the use of a negative parameter. Finally, we study the case of type $D_m$ Kostka-Foulkes polynomials parametrized by pairs of half-integer partitions in \Cref{sec_type_D_halfint}. All these identities require more work than in the type $A$ case, as we need to use a Cauchy identity at the level of Koornwinder polynomials and study specific ``non-Macdonald'' specializations. Furthermore, although the general strategy is the same, its realization depends on the type considered. Therefore, for the clarity of the exposition, we study each case separately. \Cref{sec_nontwisted} of the paper is devoted to the inverse problem of equating any one-dimensional sum associated with a tensor product of column Kirillov-Reshetikhin crystals of a given classical type with a Kostka-Foulkes polynomial. We show that this is indeed possible in type $C_n^{(1)}$, but present some obstructions in the remaining untwisted classical types. Nevertheless, we believe that it is possible to equate the remaining one-dimensional sums of types $B_n^{(1)}$ and $D_n^{(1)}$ with generalizations of Kostka-Foulkes polynomials, and we are currently working on this problem. We present a worked example in Section \ref{SecExample}, which is carried out in each affine type. Our final section outlines several directions for future research motivated by the results of this paper.   

\subsection*{Acknowledgment} 
T. Gerber and C. Lecouvey were partially supported by the Agence Nationale de la Recherche funding ANR CORTIPOM 21-CE40-0019. 
B. Ion was partially supported by the Simons Foundation grant 420882. 
C. Lenart was partially supported by the NSF grant DMS-2401755.

\bigskip\noindent2010 Mathematics Subject Classification. 05E10, 17B10.

\section{Background on representation theory and root systems}
\label{sec_gen}

\subsection{Simple Lie algebras and finite root systems}
\label{Section_DD}

In this section, we recall some classical results on root
systems and the representation theory of the Lie algebras over $\mathbb{C}$.
We refer the reader to \cite{BBK,FH,Hum} for a detailed
exposition. Consider such a finite-dimensional simple algebra $\mathfrak{g}^{T_{n}}$ with root system $R^{T_{n}}$ 
of type $T_n$, realized in the
Euclidean space $E=\oplus_{i=1}^{n}\mathbb{R\varepsilon}_{i}$. 
When there is no risk of confusion, we will drop the superscript $T_{n}$ to simplify the
notation and simply write for example $\mathfrak{g},R$ instead of
$\mathfrak{g}^{T_{n}},R^{T_{n}}$.
The Dynkin diagram of $R$ is indexed
by $I=\{1,\ldots,n\}$ and we denote as usual by

\begin{itemize}
\item $R_{+}$ and $S=\{\alpha_{i},i\in I\}$ the subsets of positive and simple roots respectively,

\item $W$ the Weyl group with generators $s_{i},\;i\in I$ associated with the
simple roots $\alpha_{i},i\in I$,

\item $\ell$ the length function on $W$: for any $w$ in $W$, $\ell(w)$ is the
number of generators $s_{i}$ in any reduced expression of $w$,

\item $Q$ the root lattice and $Q_{+}$ the cone generated by the positive roots,

\item $P$ the weight lattice and $P_{+}$ the cone of dominant weights,
generated by the fundamental weights $\omega_{i},i\in I$,

\item $\rho=\sum_{i=1}^{n}\omega_{i}=\frac{1}{2}\sum_{\alpha\in R_{+}}\alpha$,
the half-sum of positive roots,

\item $V(\nu)$ the simple $\mathfrak{g}$-module of highest weight $\nu\in
P_{+},$

\item $\leq$ the dominance order on $P$, defined by $\gamma\leq\mu$ if and only
if $\mu-\gamma\in Q_{+}$.
\end{itemize}

We also recall the Weyl character formula. For each dominant weight $\lambda$ in
$P_{+}$, the character of $V(\lambda)$ is the polynomial $s_\lambda\in\mathbb{Z}^{W}[P]=\{U\in\mathbb{Z}[P]\mid w(U)=U\}$ verifying
\[
s_{\lambda}=\frac{\sum_{w\in W}\varepsilon(w)e^{w(\lambda+\rho)-\rho}}{\prod_{\alpha\in R_{+}}(1-e^{-\alpha})}.
\]
We shall also use the notation $a_{\gamma}=\sum_{w\in W}\varepsilon
(w)e^{w(\gamma)}$ for any $\gamma\in P$. Then $s_{\lambda}=\frac
{a_{\lambda+\rho}}{a_{\rho}}$. The family of polynomials $\{m_{\mu}\mid\mu\in
P_{+}\}$ where
\[
m_{\mu}=\sum_{\gamma\in W\cdot\mu}e^{\gamma}
\]
is another basis of the character ring $\mathbb{Z}^{W}[P]$. The generalized
Kostka numbers are the coefficients in the expansion of the Weyl characters on
this basis:
\begin{equation}
s_{\lambda}=\sum_{\mu}K_{\lambda,\mu}\,m_{\mu}. \label{Kostka}
\end{equation}
The generalized Kostka number $K_{\lambda,\mu}$ is a nonnegative integer equal
to the dimension of the weight space of weight $\mu$ in the representation
$V(\lambda)$.

We will be interested in the classical root systems of type $A_{n-1}
,B_{n},C_{n}$ and $D_{n}$. We will assume the classical realization of these
root systems, namely%
\[
S=\left\{
\begin{array}
[c]{l}%
\{\alpha_{i}=\varepsilon_{i}-\varepsilon_{i+1},\;i=1,\ldots,n-1\}\text{ in type
}A_{n-1}\\
\{\alpha_{i}=\varepsilon_{i}-\varepsilon_{i+1},\;i=1,\ldots,n-1\text{ and
}\alpha_{n}=\varepsilon_{n}\}\text{ in type }B_{n}\\
\{\alpha_{i}=\varepsilon_{i}-\varepsilon_{i+1},\;i=1,\ldots,n-1\text{ and
}\alpha_{n}=2\varepsilon_{n}\}\text{ in type }C_{n}\\
\{\alpha_{i}=\varepsilon_{i}-\varepsilon_{i+1},\;i=1,\ldots,n-1\text{ and
}\alpha_{n}=\varepsilon_{n+1}+\varepsilon_{n}\}\text{ in type }D_{n},
\end{array}
\right.
\]
and
\[
R_{+}=\left\{
\begin{array}
[c]{l}%
\{\varepsilon_{i}-\varepsilon_{j},1\leq i<j\leq n\}\text{ in type }A_{n-1}\\
\{\varepsilon_{i}\pm\varepsilon_{j},\;1\leq i<j\leq n\text{ and }\varepsilon
_{i},i=1,\ldots n\}\text{ in type }B_{n}\\
\{\varepsilon_{i}\pm\varepsilon_{j},\;1\leq i<j\leq n\text{ and }\varepsilon
_{i},i=1,\ldots n\}\text{ in type }C_{n}\\
\{\varepsilon_{i}\pm\varepsilon_{j},\;1\leq i<j\leq n\}\text{ in type }D_{n}.%
\end{array}
\right.
\]
Also, it will be convenient to set $x_{i}=e^{\varepsilon_{i}}$ in order to
identify the previous character ring $\mathbb{Z}^{W}[P]$ with the ring of
symmetric polynomials for type $A_{n-1}$ in the indeterminates $x_{1},\ldots,x_{n}$ (where we will consider for simplicity the Lie algebra
$\mathfrak{gl}_{n}$ rather than $\mathfrak{sl}_{n}$) and with the ring of
symmetric Laurent polynomials for types $B_{n}$, $C_{n}$, or $D_{n}$.

\subsection{Affine Lie algebras and crystals}

Recall here that the affine root systems were classified by Kac (see
\cite{Kac}) in terms of their associated affine Dynkin diagram. Each such
Dynkin diagram of type $T_{N}^{(a)}$ is obtained by adding an affine node
(usually labelled by $0$) to one of the Dynkin diagrams associated with a
finite root system of rank $n$. 
Here again, in what follows, we will only use a
superscript $T_{N}^{(a)}$ when it will be required for the clarity of the
exposition. The root lattice so obtained is then
\[
Q_{\mathrm{a}}=Q\oplus\mathbb{Z\delta'=}
{\textstyle\bigoplus\limits_{i=1}^{n}}
\mathbb{Z\alpha}_{i}\oplus\mathbb{Z\delta'}\,,
\]
where $\delta'=\delta$ is
the imaginary null root for any affine classical types but type $A_{2n}^{(2)}$ where $\delta'=\frac{1}{2}\delta$.
The affine weight lattice can then be described as
\[
P_{\mathrm{a}}=\mathbb{Z}\Lambda_{0}\oplus P\oplus\mathbb{Z\delta'=Z}
\Lambda_{0}\oplus%
{\textstyle\bigoplus\limits_{i=1}^{n}}
\mathbb{Z\omega}_{i}\oplus\mathbb{Z\delta'}\,,
\]
where $P=\oplus_{i=1}^{n}\mathbb{Z\omega}_{i}$ is the weight lattice of the
finite root system with fundamental weights $\omega_{i},i=1,\ldots n$ and
$\Lambda_{0}$ the fundamental affine weight associated with the $0$-node.
We
will denote by $W_{\mathrm{a}}$ the affine Weyl group associated with our
affine root system. It is generated by the affine reflections $s_{i},\:i=0,1,\ldots n$ and contains the finite Weyl group $W$ as the subgroup
generated by the $s_{i},i=1,\ldots,n$. In the rest of this paper, we will
assume that the affine root systems that we consider are of classical type,
that is their underlying finite root system is of type $A,B,C$ or $D$.
Kac's classification ensures that
$T_{N}^{(a)}$ is one of 
$A_{n-1}^{(1)}$, $B_{n}^{(1)}$, $C_{n}^{(1)}$, $D_{n}^{(1)}$ (untwisted types),
$A_{2n}^{(2)}$, $A_{2n-1}^{(2)}$, $D_{n+1}^{(2)}$ (twisted types). 
In fact, we will also need the variations $A_{2n-1}^{(2,\dag)}$, $A_{2n}^{(2,\dag)}$, and $B_{n}^{(1,\dag)}$ of the
affine root systems $A_{2n-1}^{(2)}$, $A_{2n}^{(2)}$, and $B_{n}^{(1)}$ respectively, in which we relabel the nodes of the Dynkin diagrams by changing each label $i$ into $n-i$. 
This does not change the associated root system up to isomorphism but will change the 
energy statistic.
\Cref{fig:aff} and \Cref{fig:aff_twist} contain the affine Dynkin diagrams that will be
used in this paper.

\begin{figure}[ht]
\begin{tabular}
[c]{rc}%
$A_{1}^{(1)}$ & \dynkin[labels={0,1}, edge length=1.5cm, extended] A[1]{1}\\
$A_{n}^{(1)}$ & \dynkin[labels={0,1,2,n-1,n}, edge length=1.5cm, extended]
A[1]{}\\
$B_{n}^{(1)}$ &
\dynkin[labels={0,1,2,3,n-2,n-1,n}, edge length=1.5cm, extended] B[1]{}\\
$B_{n}^{(1,\dagger)}$ &
\dynkin[labels={n,n-1,n-2,3,2,1,0}, edge length=1.5cm, extended] B[1]{}\\
$C_{n}^{(1)}$ & \dynkin[labels={0,1,2,n-2,n-1,n}, edge length=1.5cm, extended]
C[1]{}\\
$D_{n}^{(1)}$ &
\dynkin [labels={0,1,2,3,n-3,n-2,n-1,n}, edge length=1.5cm, extended] D[1]{}\\
&
\end{tabular}
\figcaption{Affine Dynkin diagrams of untwisted classical types.}
\label{fig:aff}
\end{figure}

\begin{figure}[ht]
\begin{tabular}
[c]{rc}%
$A_{2}^{(2)}$ & \dynkin[labels={0,1}, edge length=1.5cm, extended] A[2]{2}\\
$A_{2n}^{(2)}$ &
\dynkin[labels={0,1,2,3,n-2,n-1,n}, edge length=1.5cm, extended] A[2]{even}\\
$A_{2n}^{(2,\dagger)}$ &
\dynkin[labels={n,n-1,n-2,n-3,2,1,0}, edge length=1.5cm, extended]
A[2]{even}\\
$A_{2n-1}^{(2)}$ &
\dynkin[labels={0,1,2,3,4,n-2,n-1,n}, edge length=1.5cm, extended] A[2]{odd}\\
$A_{2n-1}^{(2,\dagger)}$ &
\dynkin[labels={n,n-1,n-2,n-3,n-4,2,1,0}, edge length=1.5cm, extended]
A[2]{odd}\\
$D_{n+1}^{(2)}$ &
\dynkin[labels={0,1,2,3,n-3,n-2,n-1,n}, edge length=1.5cm, extended] D[2]{}\\
&
\end{tabular}
\figcaption{Affine Dynkin diagrams of twisted classical types.}
\label{fig:aff_twist}
\end{figure}

\medskip

Recall that crystal graphs can be regarded as combinatorial skeletons of
irreducible highest weight modules associated with any simple affine Lie
algebra $\widehat{\mathfrak{g}}$, see \cite{HongKang, BumpSchilling} for generalities.
More precisely, in the Kashiwara-Lusztig approach
of crystal theory, these objects are defined from representation theory of the quantum group
$U_{\nu}(\widehat{\mathfrak{g}}\mathfrak{)}$ associated with $\widehat
{\mathfrak{g}}$.
The irreducible highest weight $U_{\nu}(\widehat{\mathfrak{g}}\mathfrak{)}$-modules
are parametrized by the dominant affine weights: write $V(\Lambda)$ for
the module labeled by the dominant weight $\Lambda$. 
The crystal $B(\Lambda)$
is then an oriented graph with arrows $\overset{i}{\rightarrow}$, where 
$i\in \{0,1,\ldots,n\}$, equipped with a weight map
\[
\mathrm{wt}:B(\Lambda)\rightarrow P_{\mathrm{a}}.
\]
One can then compute the character of $V(\Lambda)$ as the generating series
of the weight map, that is
\[
\mathrm{char}(V(\Lambda))=\sum_{b\in B(\Lambda)}e^{\mathrm{wt}(b)}.
\]
This crystal $B(\Lambda)$ has a unique source vertex $b_{\Lambda}$ of weight
$\Lambda$ and a natural grading: the degree $\mathrm{d}(b)$ of $b$ in
$B(\Lambda)$ is the number of $0$-arrows in any path connecting $b_{\Lambda}$
to $b$.
A remarkable property of crystals is its compatibility with the tensor product.
More precisely, the decomposition of a tensor product of
simple modules into irreducible components is obtained by looking at the
decomposition of the associated crystal into its connected components.

\medskip

There is also a rich finite-dimensional representation theory of a
particular subalgebra $U_{\nu}^{\prime}(\widehat{\mathfrak{g}})\subset U_{\nu
}(\widehat{\mathfrak{g}})$.
The associated simple modules are no longer of highest
weight and the associated category is not semisimple.
In this article, we are
especially interested in some particular simple finite-dimensional such modules
called the Kirillov-Reshetikhin modules. These are
parametrized by pairs $(r,s)\in\{1,\ldots,n\}\times\mathbb{Z}_{>0}$, and are denoted $B^{(r,s)}$. In what follows, we will only consider these KR modules for $s=1$, which are called
\textit{column KR modules} since $r$ can be
interpreted as the height of a column-shaped Young diagram.\ 
The KR modules also have
associated crystal graphs with $i$-arrows indexed by $\{0,1,\ldots ,n\}$.\ 
These graphs are finite and connected but do not have a distinguished
source vertex (reflecting the fact the KR modules are finite-dimensional but
not of highest weight). 
When $s=1$, they admit a simple
combinatorial description in terms of column tableaux introduced by Kashiwara
and Nakashima \cite{KN1994}, which depends on the classical affine type
considered, see \cite{BumpSchilling} for more details.

\begin{example}
In type $A_{n-1}^{(1)}$, the vertices of $B^{(r,1)}$ can be identified with
the column tableaux of height $r$ on the alphabet $\{1,\ldots,n\}$. For
$i=1,\ldots,n-1$, there is an edge
$C \overset{i}{\rightarrow} C^{\prime}$
if and only if there exists $i\in\{1,\ldots,n-1\}$ such that $i\in C$, $i+1\notin
C^{\prime}$ and $C^{\prime}$ is the column obtained by replacing $i$ by $i+1$
in $C$; there is an edge $C\overset{0}{\rightarrow}C^{\prime}$ if $n\in C$,
$1\notin C^{\prime}$ and $C^{\prime}$ is the column obtained by replacing $n$
by $1$ in $C$ and by reordering the entries.
For instance, take $n=5$ and $r=2$. The set of vertices in the column KR crystal
$B^{(2,1)}$ is
\[%
\begin{array}
[c]{ccccccccccc}%
\scalebox{.8}{\begin{ytableau} 1 \\ 2\end{ytableau}} &
\scalebox{.8}{\begin{ytableau} 1 \\ 3\end{ytableau}} &
\scalebox{.8}{\begin{ytableau} 2 \\ 3\end{ytableau}} &
\scalebox{.8}{\begin{ytableau} 1\\ 4\end{ytableau}} &
\scalebox{.8}{\begin{ytableau} 2 \\ 4\end{ytableau}} &
\scalebox{.8}{\begin{ytableau} 1 \\ 5\end{ytableau}} &
\scalebox{.8}{\begin{ytableau} 3 \\ 4\end{ytableau}} &
\scalebox{.8}{\begin{ytableau} 2 \\ 5\end{ytableau}} &
\scalebox{.8}{\begin{ytableau} 3 \\ 5\end{ytableau}} &
\scalebox{.8}{\begin{ytableau} 4 \\ 5\end{ytableau}} & 
\end{array},
\]
and the column KR crystal $B^{(2,1)}$ is shown in~\Cref{fig:crystal}.

\begin{figure}[ht]
\begin{center}
\begin{tikzpicture}[scale=1]%[H]
				\tikzstyle{vertex}=[inner sep=2pt,minimum size=10pt]
				
				\node[vertex] (a0) at (0,0) {$\scalebox{.7}{\begin{ytableau} 1 \\ 2\end{ytableau}}$};
				
				\node[vertex] (a1) at (0,-1.5) {$\scalebox{.7}{\begin{ytableau} 1 \\ 3\end{ytableau}}$};
				
				\node[vertex] (a2) at (-1,-3) {$\scalebox{.7}{\begin{ytableau} 2 \\ 3\end{ytableau}}$};
				\node[vertex] (a3) at (1,-3) {$\scalebox{.7}{\begin{ytableau} 1 \\ 4\end{ytableau}}$};
				
				\node[vertex] (a4) at (-1,-4.5) {$\scalebox{.7}{\begin{ytableau} 2 \\ 4\end{ytableau}}$};
				\node[vertex] (a5) at (1,-4.5) {$\scalebox{.7}{\begin{ytableau} 1 \\ 5\end{ytableau}}$};
				
				\node[vertex] (a6) at (-1,-6) {$\scalebox{.7}{\begin{ytableau} 3 \\ 4\end{ytableau}}$};
				\node[vertex] (a7) at (1,-6) {$\scalebox{.7}{\begin{ytableau} 2 \\ 5\end{ytableau}}$};
				
				\node[vertex] (a8) at (0,-7.5) {$\scalebox{.7}{\begin{ytableau} 3 \\ 5\end{ytableau}}$};
				
				\node[vertex] (a9) at (0,-9) {$\scalebox{.7}{\begin{ytableau} 4 \\ 5\end{ytableau}}$};

				\draw[line width=1pt,->] (a0)  -- node[right] {\footnotesize $2$}  (a1);
				\draw[line width=1pt,->] (a1)  -- node[right] {\footnotesize $1$}  (a2);
				\draw[line width=1pt,->] (a1)  -- node[right] {\footnotesize $3$} (a3);
				\draw[line width=1pt,->] (a2)  -- node[right] {\footnotesize $3$} (a4);
				\draw[line width=1pt,->] (a3)  -- node[right] {\footnotesize $1$} (a4);
				\draw[line width=1pt,->] (a3)  -- node[right] {\footnotesize $4$} (a5);
				\draw[line width=1pt,->] (a4)  -- node[right] {\footnotesize $2$} (a6);
				\draw[line width=1pt,->] (a4)  -- node[right] {\footnotesize $4$} (a7);
				\draw[line width=1pt,->] (a5)  -- node[right] {\footnotesize $1$} (a7);
				
				\draw[line width=1pt,->] (a6)  -- node[right] {{\footnotesize $4$}} (a8);
				\draw[line width=1pt,->] (a7)  -- node[right] {\footnotesize $2$} (a8);
				
				\draw[line width=1pt,->] (a8)  -- node[right] {\footnotesize $3$} (a9);

                \draw[->,line width=0.5pt]
                (a7) to[out=0,in=0] node[right] {\footnotesize $0$} (a0);
                
                \draw[->,line width=0.5pt]
                (a8) to[out=180,in=-180] node[left] {\footnotesize $0$} (a1);
                
                \draw[->,line width=0.5pt]
                (a9) to[out=0,in=0] node[right] {\footnotesize $0$} (a3);
                                        
				\end{tikzpicture}
\end{center}
\figcaption{The column KR crystal $B^{(2,1)}$.}
\label{fig:crystal}
\end{figure}
\end{example}

The tensor products of KR modules are still irreducible,
thus the tensor product of column
KR crystals give finite-connected crystals. 
These crystals do not remain
irreducible in general when one removes the $0$-arrows. In fact, these
$0$-arrows permit to define a subtle statistic $D$, called \textit{energy}, on any
tensor product $B$ of KR crystals in such a way that $D$ is constant on the
classical components of $B$ (obtained by removing the $0$-arrows). The
definition of $D$ is quite involved and also depends on a choice of normalization 
(i.e. the choice of a particular classical component where $D$ is zero).
For the purpose of this paper, we can in fact bypass this definition by
exploiting the connection between our crystal $B$ and a well-chosen affine
Demazure crystal, which we will explain in \Cref{Th_ST}. 
For the moment, let us denote by $\mathrm{HW}(B)$ the set of classical highest weight vertices in $B$, that is the set of
vertices in $b$ with no incident $i$-arrows but one $0$-arrow. Each vertex $b$
in $\mathrm{HW}(B)$ admits a classical dominant weight$.$ 
We can then collect
all the classical highest weight vertices in $\mathrm{HW}(B)$ with prescribed dominant weight $\lambda$. 
The generating series associated with the energy
function $D$ over the subset $\mathrm{HW}(B)_{\lambda}$ of $\mathrm{HW}(B)$
with dominant weight $\lambda$ is called a one-dimensional sum (1-d sum for
short). In this paper we are particularly interested in the 1-d sums defined
from a partition $\mu$ in the rectangular partition $(m^{n})$.
(whose Young diagram has $n$ rows and $m$ columns). 
To such a partition, we can indeed associate the tensor product of
column KR crystals%
\[
B_{\mu}=B^{(\mu_{1}^{\prime},1)}\otimes\cdots\otimes B^{(\mu_{m}^{\prime},1)}\,,
\]
where $\mu_{1}^{\prime},\ldots,\mu_{m}^{\prime}$ are the columns of the Young
diagram associated with $\mu$.

\begin{definition}
\label{Def1-d}
Let $\mu$ be a partition in the rectangle $(m^n)$.
The \textit{one-dimensional sum} associated to $\mu$ and to the dominant weight $\la$ is the polynomial
\[
X_{\lambda,\mu}(q)=\sum_{b\in\mathrm{HW}(B_{\mu})_{\lambda}}q^{D(b)}.
\]
\end{definition}

\section{Koornwinder polynomials and their Macdonald specializations}
\label{sec_Koornwinder}

\subsection{Basics on Koornwinder polynomials}

In this paragraph, we recall the definition of the Koornwinder polynomials.
We refer the reader to \cite{Mimachi2001} for a brief history and more details.\
Consider a formal parameter $a$ and recall the notation for the $q$-Pochhammer symbol%
\begin{equation*}
(a;q)_{\infty }=\prod_{j\geq 0}(1-aq^{j}).
\end{equation*}
Similarly for a family of formal parameters $(a_{1},\ldots ,a_{m})$ set%
\begin{equation*}
(a_{1},\ldots ,a_{m};q)_{\infty }=\prod_{k=1}^{m}(a_{k};q)_{\infty }.
\end{equation*}%
In what follows $q,t,a,b,c,d$ are indeterminates and we set $\mathbb{K}=%
\mathbb{C}(q,t,a,b,c,d)$. Consider the ring $\mathbb{K}^{W}[x_{1}^{\pm
1},\ldots ,x_{n}^{\pm 1}]$ of Laurent polynomials in the indeterminates $x_{1},\ldots ,x_{n}$ invariant by the action of $W,$ the Weyl group of type $C_{n}$.\ 
As mentioned in the previous section, this can be regarded as the character ring of type $C_{n}$ over $%
\mathbb{K}$. In particular $\mathbb{K}^{W}[x_{1}^{\pm 1},\ldots ,x_{n}^{\pm
1}]$ admits various bases indexed by the set $\mathcal{P}_{n}$ of partitions
with at most $n$ parts (which here can be identified with the set of
dominant weights of type $C_{n}$).\ We have a natural bar involution on $\mathbb{K}[x_{1}^{\pm 1},\ldots ,x_{n}^{\pm 1}]$ such that for any $f$ in $\mathbb{K}[x_{1}^{\pm 1},\ldots ,x_{n}^{\pm 1}]$, the polynomial $\overline{f}$ is obtained by replacing each $x_{i}$ by its inverse $x_{i}^{-1}$. We
write $[f]_{1}$ for the constant term in $f$.\ Now we can define a paring on 
$\mathbb{K}^{W}[x_{1}^{\pm 1},\ldots ,x_{n}^{\pm 1}]$ by 
\begin{equation*}
\langle f,g\rangle =[f\Delta \overline{g\Delta }]_{1}
\end{equation*}%
where 
\begin{equation*}
\Delta =\prod_{1\leq i<j\leq n,~s_{1},s_{2}\in \{\pm 1\}}\frac{%
(x_{i}^{s_{1}}x_{j}^{s_{2}};q)_{\infty }}{(tx_{i}^{s_{1}}x_{j}^{s_{2}};q)_{%
\infty }}\prod_{j=1,s\in \{\pm 1\}}^{n}\frac{(x_{j}^{2s};q)_{\infty }}{%
(ax_{j}^{s},bx_{j}^{s},cx_{j}^{s},dx_{j}^{s};q)_{\infty }}.
\end{equation*}%
The following proposition defines the basis of Koornwinder polynomials.

\begin{proposition}
There exists a unique basis $\{P_{\lambda }(x;a,b,c,d;q,t)\mid \lambda \in 
\mathcal{P}_{n}\}$ of $\mathbb{K}^{W}[x_{1}^{\pm 1},\ldots ,x_{n}^{\pm 1}]$
whose decomposition on the monomial basis $\{m_{\lambda }(x)\mid \lambda \in 
\mathcal{P}_{n}\}$ is unitriangular for the dominant order and such that 
\begin{equation*}
\langle P_{\lambda }(x;a,b,c,d;q,t),P_{\mu }(x;a,b,c,d;q,t)\rangle =0
\end{equation*}%
for any pair $\lambda,\mu \in \mathcal{P}_{n}$ with that $\lambda
\neq \mu $.\ The polynomials $P_{\lambda }(x;a,b,c,d;q,t)$, for $\lambda \in 
\mathcal{P}_{n}$, are called the Koornwinder polynomials (of rank $n$).
\end{proposition}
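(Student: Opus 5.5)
The plan is to obtain the Koornwinder basis as the set of eigenvectors of a self-adjoint operator that is triangular on the monomial basis and has distinct eigenvalues. This is the standard route, going back to Koornwinder and Macdonald. A direct Gram--Schmidt argument would not work, because the dominance order is only partial. We first need the pairing to be symmetric and nondegenerate on $\mathbb{K}^{W}[x_1^{\pm1},\ldots,x_n^{\pm1}]$. Nondegeneracy can be checked by specialising $q$ and $t$ to values where $\Delta\overline{\Delta}$ reduces to a positive weight on the torus. Alternatively, take the leading term at $t=q$ and $a=b=c=d=0$ (together with the appropriate specialisation making $\Delta$ a finite product): there $\langle m_\lambda,m_\mu\rangle$ is block-triangular with nonzero diagonal. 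Symmetry follows from $W$-invariance of $\Delta\overline{\Delta}$ and invariance of the constant term under $x_i\mapsto x_i^{-1}$.

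Next we introduce Koornwinder's $q$-difference operator
\[
D=\sum_{i=1}^{n}\Phi_i(x)\,(T_{q,i}-1)+\Phi_i(x^{-1})\,(T_{q,i}^{-1}-1),
\]
where $T_{q,i}$ sends $x_i$ to $qx_i$ and
\[
\Phi_i(x)=\frac{(1-ax_i)(1-bx_i)(1-cx_i)(1-dx_i)}{(1-x_i^2)(1-qx_i^2)}\prod_{j\neq i}\frac{(1-tx_ix_j)(1-tx_ix_j^{-1})}{(1-x_ix_j)(1-x_ix_j^{-1})}.
\]
We then prove three facts in order.

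\begin{enumerate}
\item $D$ preserves $\mathbb{K}^{W}[x^{\pm1}]$. The expression $Df$ is $W$-invariant by construction. To see it is a Laurent polynomial, note that its possible poles along $x_i^{2}=1$, $x_i^2=q^{-1}$ and $x_ix_j^{\pm1}=1$ cancel in pairs by symmetry, exactly as in the Macdonald case.
\item $D$ is triangular: $Dm_\lambda=E_\lambda m_\lambda+\sum_{\mu<\lambda}c_{\lambda\mu}m_\mu$. We get this by expanding $\Phi_i$ as a power series in the dominant chamber and reading off the leading exponent. The result is
\[
E_\lambda=\sum_i\Bigl(q^{-1}abcd\,t^{2n-i-1}(q^{\lambda_i}-1)+t^{i-1}(q^{-\lambda_i}-1)\Bigr).
\]
\item $D$ is symmetric for $\langle\cdot,\cdot\rangle$. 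This reduces to the identity
\[
\Phi_i(x)\,\Delta\overline{\Delta}=T_{q,i}\bigl(\Phi_i(x^{-1})\,\Delta\overline{\Delta}\bigr),
\]
which is a direct computation using $(z;q)_\infty/(qz;q)_\infty=1-z$. We combine it with the shift invariance of the constant term.
\end{enumerate}

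Given these, existence and orthogonality follow. The eigenvalues $E_\lambda$ are pairwise distinct for $\lambda\neq\mu$ as elements of $\mathbb{K}$, since $q,t,a,b,c,d$ are generic indeterminates. So the triangular matrix of $D$ is diagonalisable with a unique unitriangular eigenvector $P_\lambda=m_\lambda+\sum_{\mu<\lambda}u_{\lambda\mu}m_\mu$ for each $\lambda$. Self-adjointness gives
\[
E_\lambda\langle P_\lambda,P_\mu\rangle=\langle DP_\lambda,P_\mu\rangle=\langle P_\lambda,DP_\mu\rangle=E_\mu\langle P_\lambda,P_\mu\rangle,
\]
hence $\langle P_\lambda,P_\mu\rangle=0$ whenever $\lambda\neq\mu$.

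For uniqueness, let $\{Q_\lambda\}$ be another unitriangular basis with the orthogonality property. We prove $Q_\lambda=P_\lambda$ by induction on dominance. The difference $Q_\lambda-P_\lambda$ lies in $\mathrm{span}\{P_\mu:\mu<\lambda\}=\mathrm{span}\{Q_\mu:\mu<\lambda\}$, and it is orthogonal to every $Q_\mu=P_\mu$ with $\mu<\lambda$. Note that $\langle P_\mu,P_\mu\rangle\neq0$, because otherwise $P_\mu$ would lie in the radical of the pairing, contradicting nondegeneracy. So the Gram matrix on this span is diagonal with nonzero entries, and the difference vanishes.

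The main obstacle is step (1) together with the adjointness identity in step (3). These carry the real content of Koornwinder's theorem: the exact pole cancellation and the matching of the $(a,b,c,d)$-factors of $\Delta$ against the numerator of $\Phi_i$. I would first verify both in rank one, the Askey--Wilson case, where they reduce to explicit identities in a single variable. Then I would observe that the pairwise $t$-factors behave as in Macdonald's $(C_n,C_n)$ theory. Alternatively, one may simply cite \cite{Mimachi2001} and Koornwinder's original paper for these two facts.
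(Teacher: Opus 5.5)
The paper gives no proof of this proposition. It recalls the result from the literature (Koornwinder's construction, with Mimachi's paper as reference), and the remark after it only mentions the characterization by a $q$-difference operator. Your proposal is that standard proof, and its structure is correct:
\begin{itemize}
\item the operator $D$ and the eigenvalues $E_\lambda$ are the right ones;
\item $E_\lambda\neq E_\mu$ for $\lambda\neq\mu$ does hold, since the $abcd$-free part $\sum_i t^{i-1}(q^{-\lambda_i}-1)$ already determines $\lambda$;
\item triangular diagonalization on the finite-dimensional spaces $V_\lambda=\mathrm{span}\{m_\mu:\mu\le\lambda\}$ gives existence;
\item you correctly see that uniqueness needs $\langle P_\mu,P_\mu\rangle\neq0$.
\end{itemize}
However, two conventions must be fixed. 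As written, step (3) is false for the paper's literal $\Delta$, and step (2) is false for the $C_n$ dominance order suggested by Section~1.

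First, the weight. The paper's $\Delta$ already contains the factors for both $x_i^{\pm1}$, so $\overline{\Delta}=\Delta$ and $\Delta\overline{\Delta}=\Delta^{2}$. Your identity in (3) holds if $\Delta\overline{\Delta}$ is replaced by the Koornwinder density $\Delta$ itself; in rank one, $T_q\bigl(\Phi(x^{-1})\Delta\bigr)=\Phi(x)\Delta$. It fails for $\Delta^{2}$, because it would force $T_{q,i}\Delta=\Delta$. You must therefore state that you work with $\langle f,g\rangle=[f\,\overline{g}\,\Delta]_1$.

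Second, the order. The numerator of $\Phi_i$ has the odd part $-(a+b+c+d)x_i-(abc+abd+acd+bcd)x_i^{3}$. So $D$ is triangular only for the $BC_n$ (i.e.\ $B_n$) dominance order, where $\mu\le\lambda$ means all partial sums of $\lambda-\mu$ are nonnegative, with no parity condition. In the $C_n$ order, $\lambda-\mu\in Q_+$ forces $|\lambda|\equiv|\mu|\pmod 2$. With that order, both your step (2) and the proposition fail already for $n=1$: one computes $P_{(1)}=m_{(1)}+\frac{abc+abd+acd+bcd-a-b-c-d}{1-abcd}\,m_{(0)}$.

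Two smaller points.
\begin{itemize}
\item Your alternative nondegeneracy argument is imprecise as stated. Taking $t=q$ with $a=b=c=d=0$ does not make $\Delta$ a finite product, and the monomial Gram matrix is not triangular. It becomes a clean, purely formal proof if you set all six parameters to $0$. The pairing of the $m_\mu$ takes values in $\mathbb{C}[[q,t,a,b,c,d]]$, and $\Delta$ specializes to the Weyl density $\prod_{\alpha\in R^{C_n}}(1-x^{\alpha})$. By Weyl orthogonality and the unitriangularity of the change of basis between $\{m_\mu\}$ and $\{s^{C_n}_\mu\}$, the Gram determinant of $\{m_\mu:\mu\le\lambda\}$ has constant term $|W|^{N}\neq0$, with $N=\#\{\mu\le\lambda\}$.
\item ``Shift invariance of the constant term'' is not automatic for the infinite products in $\Delta$. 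It is a contour shift from $|x_i|=1$ to $|x_i|=|q|$, which is legitimate for $|a|,|b|,|c|,|d|,|t|<1$. The reason is that the numerator of $\Phi_i(x^{-1})$ cancels exactly the factors $1-ax_i^{-1},\ldots,1-dx_i^{-1}$ and $1-tx_i^{-1}x_j^{\pm1}$ in the denominator of $\Delta$. Without this cancellation these factors would produce poles in the annulus $|q|\le|x_i|\le1$; the poles of $\Phi_i(x^{-1})$ itself are absorbed by the numerator of $\Delta$. This cancellation, not the identity itself (which is routine), is the real content of step (3), and you should state it.
\end{itemize}
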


\begin{remark}
The Koornwinder polynomials are invariant with respect to the permutation of the
indeterminates $a,b,c,d$ which are called \emph{Askey-Wilson parameters}.\ They can also be defined as the eigenfunctions of a
remarkable order one $q$-difference operator acting on $\mathbb{K}^{W}[x_{1}^{\pm 1},\ldots,x_{n}^{\pm 1}]$.
\end{remark}

\subsection{Koornwinder polynomials and the dual Cauchy identity}

Fix $n,m\in\mathbb{Z}_{\geq1}$. We denote by $(m^{n})$ the rectangular
partition with $n$ rows and $m$ columns. For any partition $\lambda
\subseteq(m^{n})$, we set
\[
\widehat{\lambda}=(n-\la'_m, \ldots, n-\la'_1)\,,
\]
where $\lambda^{\prime}=(\lambda_{1}^{\prime},\ldots,\lambda_{m}^{\prime
})\subseteq(n^{m})$ is the conjugate of $\lambda$.

\noindent In \cite[Theorem 2.1]{Mimachi2001}, Mimachi establishes a
dual Cauchy-type identity between rank $n$ and rank $m$  Koornwinder polynomials, which is stated as follows:
\begin{equation}
\prod_{\substack{1\leq i\leq n\\1\leq j\leq m}}(x_{i}+x_{i}^{-1}-y_{j}-y_{j}^{-1})=\sum_{\lambda\subseteq(m^{n})}(-1)^{|\widehat{\lambda}|}P_{\lambda
}(x;a,b,c,d;q,t)\,P_{\widehat{\lambda}}(y;a,b,c,d;t,q). \label{Cauchy_Koorn}
\end{equation}
Note that $P_{\lambda}(x;a,b,c,d;q,t)$ is a rank $n$  Koornwinder
\emph{$(q,t)$-polynomial} in $x=x_{1},\ldots,x_{n}$, and $P_{\widehat{\lambda}}(y;a,b,c,d;t,q)$ is a rank $m$  Koornwinder
\emph{$(t,q)$-polynomial} in $y=y_{1},\ldots,y_{m}$, both with the same  Askey-Wilson parameters
$a,b,c,d$.

\subsection{Macdonald specializations}

It is known that these Koornwinder polynomials recover Macdonald polynomials
by appropriate specialization of the Askey-Wilson parameters $a,b,c,d$. More precisely, we
recover Macdonald polynomials $P_{\lambda}^{T_{N}^{(a)}}(x; q,t,u)$ of each
classical affine types $T_{N}^{(a)}$ by using the specializations in Table~\ref{table_spec}, see e.g. \cite{DFK25}.

\begin{table}[ht]%
\[%
\begin{array}
[c]{@{}l@{\hskip20pt}l@{\hskip20pt}l@{\hskip20pt}l@{\hskip20pt}l}\hline
\text{Macdonald type} & a & b & c & d\\\hline
D_{n}^{(1)} & 1 & -1 & q^{\frac{1}{2}} & -q^{\frac{1}{2}}\\
B_{n}^{(1)} & u & -1 & q^{\frac{1}{2}} & -q^{\frac{1}{2}}\\
B_{n}^{(1,\dagger)} & 1 & -1 & q^{\frac{1}{2}}u & -q^{\frac{1}{2}}\\
C_{n}^{(1)} & u^{\frac{1}{2}} & -u^{\frac{1}{2}} & u^{\frac{1}{2}}q^{\frac
{1}{2}} & -u^{\frac{1}{2}}q^{\frac{1}{2}}\\
A_{2n-1}^{(2)} & u^{\frac{1}{2}} & -u^{\frac{1}{2}} & q^{\frac{1}{2}} &
-q^{\frac{1}{2}}\\
A_{2n-1}^{(2,\dagger)} & 1 & -1 & q^{\frac{1}{2}}u^{\frac{1}{2}} &
-q^{\frac{1}{2}}u^{\frac{1}{2}}\\
D_{n+1}^{(2)} & u & -1 & uq^{\frac{1}{2}} & -q^{\frac{1}{2}}\\
A_{2n}^{(2)} & u & -1 & u^{\frac{1}{2}}q^{\frac{1}{2}} & -u^{\frac{1}{2}}q^{\frac{1}{2}}\\\hline
\end{array}
\]
\tablecaption{Specializations of the Koornwinder polynomials yielding Macdonald
polynomials. Permutations of the parameters $(a,b,c,d)$ are allowed. The usual equal parameter
Macdonald polynomials are obtained for $u=t$.}
\label{table_spec}%
\end{table}

Observe that the Macdonald polynomials we consider are associated to non-simply laced affine root systems and we can consider the corresponding Macdonald
polynomials with unequal parameters $(t,u)$, with $t$ being associated to the roots of square length $2$, and $u$ being associated to roots of square root length $1$ (for $B_{n}^{(1)}$, $B_{n}^{(1,\dagger)}$, $D_{n+1}^{(2)}$), or $4$ (for $A_{2n-1}^{(2)}$, $A_{2n-1}^{(2,\dagger)}$, $C_{n}^{(1)}$), or both (for $A_{2n}^{(2)}$). Note that  $A_{2}^{(2)}$ has no roots of square length $2$ and in this case the Macdonald polynomials do not involve the parameter $t$. On the other hand, $D_{n}^{(1)}$ has only roots of square length $2$ and in this case the Macdonald polynomials do not involve the parameter $u$. To streamline the notation we will keep both parameters $t,u$ in the notation even in the situations when the particular Macdonald polynomials depend only one of them.

The more typically used equal parameter Macdonald
polynomials $P_{\lambda}^{T_{N}^{(a)}}(x;q,t)$ are obtained by setting $u=t$, that
is $P_{\lambda}^{T_{N}^{(a)}}(x;q,t)=P_{\lambda}^{T_{N}^{(a)}}(x; q,t,t)$. The parameter $u$ will allow us to consider the Hall-Littlewood
polynomials with unequal parameters which will be considered starting with Section~\ref{Sec_D_(n+1)^2}.\ 

\subsection{Hall-Littlewood polynomials and the \texorpdfstring{$q=0$}{q=0} Macdonald
specialization}

\label{subsec_HL}The Hall-Littlewood polynomials can be regarded as the $q=0$
specializations of  Macdonald polynomials. Observe first that by
Table~\ref{table_spec}, once a Macdonald specialization of type $T_{N}^{(a)}$
is performed in a Koornwinder polynomial, the additional $q=0$ specialization
only depends on the parabolic subsystem $T_{n}$ of $T_{N}^{(a)}$  obtained by
removing its $0$-node. For any dominant weight $\gamma\in P_{+}^{T_{n}}$, we
can thus define the Hall-Littlewood polynomial $P_{\gamma}^{T_{n}}(x;t,u)$ by
\[
P_{\gamma}^{T_{n}}(x; t,u)=P_{\gamma}^{T_{N}^{(a)}}(x; 0,t, u)\,,
\]
where $T_{N}^{(a)}$ is any affine root system with underlying classical root
system $T_{n}$. 

The previous Hall-Littlewood polynomials have a simpler definition independent of
Koornwinder-Macdonald polynomial theory. For each classical root system
$T_{n}=B_{n}$, $C_{n}$, or $D_{n}$, one can indeed establish that the family of
Hall-Littlewood polynomials $\{P_{\gamma}^{T_{n}}(x;u,t)\mid\gamma\in
P_{+}^{T_{n}}\}$ yields a basis of the character ring of type $T_{n}$ such
that%
\[
s_{\nu}^{T_{n}}(x)=\sum_{\gamma}K_{\nu,\gamma}(u,t)\,P_{\gamma}^{T_{n}%
}(x;u,t)\text{ for any }\nu\in P_{+}^{T_{n}}\,,%
\]
where the polynomials $K_{\nu,\gamma}(u,t)$ are $(u,t)$-deformations of the
generalized Kostka numbers $K_{\nu,\gamma}$ (see (\ref{Kostka})). One can show
(see \cite{NR}) that they can be obtained from the $(u,t)$-Kostant partition
function $\mathcal{P}_{u,t}^{T_{n}}$ defined from the series expansion%
\begin{align*}
\prod_{i=1}^{n}\frac{1}{1-ux_{i}}\prod_{1\leq i<j\leq n}\frac{1}{\left(
1-t\frac{x_{i}}{x_{j}}\right)  \left(  1-tx_{i}x_{j}\right)  }  &
=\sum_{\beta\in\mathbb{Z}^{n}}\mathcal{P}_{u,t}^{B_{n}}(\beta)x^{\beta}\text{
in type }B_{n},\\
\prod_{i=1}^{n}\frac{1}{1-ux_{i}^{2}}\prod_{1\leq i<j\leq n}\frac{1}{\left(
1-t\frac{x_{i}}{x_{j}}\right)  \left(  1-tx_{i}x_{j}\right)  }  &
=\sum_{\beta\in\mathbb{Z}^{n}}\mathcal{P}_{u,t}^{C_{n}}(\beta)x^{\beta}\text{
in type }C_{n},\\
\prod_{1\leq i<j\leq n}\frac{1}{\left(  1-t\frac{x_{i}}{x_{j}}\right)  \left(
1-tx_{i}x_{j}\right)  }  &  =\sum_{\beta\in\mathbb{Z}^{n}}\mathcal{P}%
_{u,t}^{D_{n}}(\beta)x^{\beta}\text{ in type }D_{n}.
\end{align*}
For any pair of dominant weights $\nu,\gamma$ in $P_{+}^{T_{n}},$ we then have
\begin{equation}
K_{\nu,\gamma}^{T_{n}}(u,t)=\sum_{w\in W^{T_{n}}}(-1)^{\ell(w)}\mathcal{P}%
_{u,t}^{T_{n}}\left(  w(\nu+\rho^{T_{n}})-(\gamma+\rho^{T_{n}})\right).
\label{DefKF(u,t)}%
\end{equation}
Observe that such double deformations of the generalized Kostka numbers have
been already introduced and studied in \cite{CKL2024} and \cite{Lecouvey2026}.\ We will
call them unequal-parameter Kostka-Foulkes polynomials. We will write for simplicity
$K_{\nu,\gamma}^{T_{n}}(t)=K_{\nu,\gamma}^{T_{m}}(t,t).\ $These last
$t$-deformations of the generalized Kostka numbers are also called Lusztig
$t$-analogues of weight multiplicities in the literature. They are know to
admit nonnegative integer coefficients. Similarly, the polynomials $P_{\gamma
}^{T_{n}}(x;t)=P_{\gamma}^{T_{n}}(x;t,t)$ are the usual (one-parameter)
Hall-Littlewood polynomials of type $T_{n}$.

\begin{remark}
\label{Rq_flip}  \ 

\begin{enumerate}
\item One can prove (see \cite{NR}) that the unequal-parameter Hall-Littlewood polynomials
of type $B_{n}$ also satisfy%
\[
P_{\nu}^{B_{n}}(x;u,t)=\frac{1}{W_{\nu}(u,t)}\frac{\left(  \sum_{w}%
(-1)^{\ell(w)}w\prod_{i=1}^{n}(1-ux_{i}^{-1})\prod_{1\leq i<j\leq n}%
(1-tx_{i}^{-1}x_{j}^{-1})x^{\nu+\rho^{B_{n}}}\right)  }{a_{\rho}^{B_n}}\,,%
\]
where
\[
W_{\nu}(u,t)=\sum_{w\in W_{\nu}}\prod_{\alpha\in I(w)}v^{\alpha}%
\]
with $I(w)=\{\alpha\in R_{+}^{B_{n}}\mid w(\alpha)\in-R_{+}^{B_{n}}\}$ and
$v^{\alpha}=t$ (resp. $v^{\alpha}=u$) if $\alpha$ is a long (resp. short)
root. There are similar formulas in type $C_{n}$ and $D_{n}$.

\item It is proved in \cite{CKL2024} that the polynomials $K_{\nu,\gamma}^{B_{n}}(u,t)$ have nonnegative integer coefficients when $\nu$ and $\gamma$ are
half-integer dominant weights. This is not true in general when $\nu$ and
$\gamma$ are partitions or in type $C_{n}$.

\item In the following sections, we will often need to consider the previous
Hall-Littlewood polynomials but for the root system of rank $m$ and the associated
character ring in the indeterminates $y=(y_{1},\ldots,y_{m})$ with the
deformation parameter $q$ instead of $t$. This swap will be a consequence of
the dual Cauchy formula for Koornwinder polynomials (\ref{Cauchy_Koorn}).
\end{enumerate}
\end{remark}

As explained in Remark \ref{Rq_flip},  in the forthcoming sections we will need
to swap the parameters $q$ and $t$ and also the ranks $n$ and $m$ in the
Koornwinder specializations; and next to specialize $t=0$ to get unequal parameter
Hall-Littlewood polynomials in $(u,t)$. Table~\ref{table_spec_swap} contains the mentioned specializations.

\begin{table}[ht]
\[
\begin{array}
[c]{@{}l@{\hskip20pt}l@{\hskip20pt}l@{\hskip20pt}l@{\hskip20pt}l}\hline
\text{Macdonald and classical types} & a & b & c & d\\\hline
D_{m}^{(1)},D_{m} & 1 & -1 & 0 & 0\\
B_{m}^{(1)},B_{m} & u & -1 & 0 & 0\\
B_{m}^{(1,\dagger)},D_{m} & 1 & -1 & 0 & 0\\
C_{m}^{(1)},C_{m} & u^{\frac{1}{2}} & -u^{\frac{1}{2}} & 0 & 0\\
A_{2m-1}^{(2)},C_{m} & u^{\frac{1}{2}} & -u^{\frac{1}{2}} & 0 & 0\\
A_{2m-1}^{(2,\dagger)},D_{m} & 1 & -1 & 0 & 0\\
D_{m+1}^{(2)},B_{m} & u & -1 & 0 & 0\\
A_{2m}^{(2)},B_{m} & u & -1 & 0 & 0\\\hline
\end{array}
\]
\tablecaption{Specializations of the Koornwinder polynomials yielding $(u,q)$ 
Hall-Littlewood polynomials in rank $m$ after swapping the ranks $n$ and $m$ 
and also the indeterminates $t$ and $q$ and putting $t=0$.}
\label{table_spec_swap}
\end{table}

\subsection{Weyl module characters and the \texorpdfstring{$t=0$}{t=0} Macdonald specialization}

Fix an affine root system of type $T_{N}^{(a)}$ with underlying classical root
system of type $T_{n}$. The goal of this paragraph is to establish the
following theorem which is crucial for our purposes.

\begin{theorem}
\label{Cor_PXS} For any dominant weight $\mu$ of type $T_{n}$, we have
\[
P_{\mu}^{T_{N}^{(a)}}(x;q,0)=\sum_{\lambda}X_{\lambda,\mu}^{T_{N}^{(a)}}(q)\,s_{\lambda}^{T_{n}}(x).
\]

\end{theorem}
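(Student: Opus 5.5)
The plan is to chain two known results, both of which the paper's text points to. The first is the identification of specialized Macdonald polynomials at $t=0$ with (graded) characters of affine Demazure modules, equivalently of local Weyl modules. The second is the Schilling--Tingley theorem, referred to above as \Cref{Th_ST}, which identifies tensor products of column KR crystals with affine Demazure crystals. No computation with Koornwinder polynomials should be needed beyond recognizing the $t=0$ specialization of the Macdonald polynomials of Table~\ref{table_spec}.

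First, I would recall the theorem of Ion, which holds in the equal-parameter setting (so $u=t=0$ in our notation). For a dominant $\mu$, it says that $P_\mu^{T_N^{(a)}}(x;q,0)$ equals the graded character of the affine Demazure module $V_{w}(\Lambda)$. Here $\Lambda$ is a level-one weight and the Demazure module has $\mathfrak{g}$-stable lowest part, specialized at $e^{\delta'}\mapsto q$ and restricted to the classical weights. The affine root system in this statement is the dual of the one labelling the Macdonald polynomial, so the reindexing conventions need to be matched in each line of Table~\ref{table_spec}, including the $\dagger$ variants. The essential point is that the $\dagger$ relabeling in the Macdonald specialization corresponds exactly to the relabeled $0$-node used to define the energy on the crystal side.

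Second, I would invoke the result of Fourier--Littelmann and Schilling--Tingley. It states that $B_\mu=B^{(\mu_1',1)}\otimes\cdots\otimes B^{(\mu_m',1)}$, tensored with the highest-weight crystal $B(\Lambda)$ of the appropriate level one, is isomorphic to the Demazure crystal $B_w(\Lambda')$. Under this isomorphism the energy $D$ corresponds, up to a normalizing shift and sign convention, to the affine degree $\mathrm{d}$ counting $0$-arrows. Since the Demazure crystal is a union of classical components, and $D$ is constant on each of them, the graded character decomposes as
\[
\sum_{b\in B_\mu} q^{D(b)}e^{\mathrm{wt}(b)}=\sum_{\lambda}\sum_{b\in \mathrm{HW}(B_\mu)_\lambda}q^{D(b)}\,s^{T_n}_\lambda(x)=\sum_\lambda X_{\lambda,\mu}(q)\,s_\lambda^{T_n}(x),
\]
by \Cref{Def1-d}. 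Combining this with the first step gives the claim.

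The main obstacle is bookkeeping rather than conceptual. One must check three things type by type: that Ion's affine type matches the crystal's affine type under the duality, that the normalization of $D$ makes the $q$-power agree exactly (no stray $q^{c}$ factor, and $q$ versus $q^{-1}$), and that the twisted cases $A^{(2)}_{2n}$, with $\delta'=\tfrac12\delta$, give $q$ rather than $q^{2}$. I would handle this first by fixing the normalization so that the component of highest weight $\mu$ has energy $0$, which matches the leading monomial $m_\mu$ of $P_\mu$ with coefficient $1$. I would then check the grading on one small example, such as $B^{(1,1)}\otimes B^{(1,1)}$, in each twisted type. Where Ion's theorem requires $\Lambda$ to be special, I would use the diagram automorphisms that the $\dagger$ conventions already encode.
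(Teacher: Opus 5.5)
Your chain (Ion's identification of $P_\mu(x;q,0)$ with a level-one Demazure character, then the Schilling--Tingley embedding with the energy normalized to the degree) is exactly the paper's argument for the simply laced and twisted types. It does not, however, cover the untwisted non-simply-laced types $B_n^{(1)}$ and $C_n^{(1)}$. These are part of the statement, and the $C_n^{(1)}$ case is what Theorem~\ref{thm_Cn1} rests on. In these types both links of your chain break.

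\begin{itemize}
\item On the Macdonald side, $P_\mu(x;q,0)$ is the graded character of a local Weyl module of the current algebra (the Chari--Ion--Kus result recalled in the paper). For non-simply-laced $\mathfrak g$, this Weyl module is in general not a level-one Demazure module of the same affine algebra; it only has a filtration by Demazure modules. Your phrase ``equivalently of local Weyl modules'' holds only in the simply laced and twisted cases, which is exactly the hypothesis of Theorem~\ref{Th_Bogdan}.
\item On the crystal side, Theorem~\ref{Th_ST} needs perfectness. The column KR crystals $B^{(r,1)}$ with $1\le r<n$ of type $C_n^{(1)}$, and $B^{(n,1)}$ of type $B_n^{(1)}$, are not perfect. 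Already in $B^{(1,1)}$ of type $C_n^{(1)}$, the minimal elements $2$ and $\overline{1}$ both have $\varepsilon=\Lambda_1$. So there is no level-one Demazure crystal to embed into.
\end{itemize}

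The missing ingredient is Corollary~7.11 of Lenart--Naito--Sagaki--Schilling--Shimozono. It identifies $P_\mu(x;q,0)$, the Weyl module character, directly with the energy generating function on $B_\mu$ via quantum Lakshmibai--Seshadri paths, with no Demazure modules involved. The paper uses it for all untwisted types.

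Relatedly, your remark that the Demazure side lives on the \emph{dual} affine type is not harmless bookkeeping. In the paper's labelling (Table~\ref{table_spec}), the Macdonald type and the crystal type coincide. Pairing, say, the $C_n^{(1)}$ polynomial with Demazure crystals of the dual algebra $D_{n+1}^{(2)}$ would produce different 1-d sums; compare Theorems~\ref{thm_Cn1} and~\ref{thm_Dn+12}, whose classical crystal structures already differ.

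The $\dagger$ cases are also more than a relabeling. The map $i\mapsto n-i$ is not induced by an automorphism of the affine Dynkin diagram. It moves the $0$-node to the other end and changes the classical subsystem: from $C_n$ to $D_n$ for $A_{2n-1}^{(2)}$, and from $C_n$ to $B_n$ for $A_{2n}^{(2)}$. So the corresponding specializations in Table~\ref{table_spec} are new cases to which Ion's theorem, as proved, does not apply, and diagram automorphisms cannot reduce them to the undaggered ones. The paper handles this by rerunning Ion's intertwiner recursion for full-parameter Koornwinder polynomials at these specializations. It then uses perfectness of the relevant column KR crystals to apply Theorem~\ref{Th_ST}.

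A minor point: the Demazure crystal is $B_\mu\otimes u_\Lambda$, the tensor product with the highest weight element only. The product $B_\mu\otimes B(\Lambda)$ is the whole highest weight crystal $B(\Lambda')$.
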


In fact this theorem can be obtained by combining various results already appearing in the literature. We are going to proceed in three steps.

\subsubsection{Step 1: the affine root system $T_{N}^{(a)}$ is untwisted with
no dagger}

In this case our theorem is exactly Corollary~7.11 in \cite{LNSSS} (up to a slight
change of convention).

\subsubsection{Step 2: the affine root system $T_{N}^{(a)}$ is of twisted type
with no dagger}

Here we need a detour and consider some affine Demazure characters.\ Recall
first that the Demazure characters are the characters of modules associated
with the positive part $U_{\nu}^{+}(\widehat{\mathfrak{g}}\mathfrak{)}$ of
$U_{\nu}(\widehat{\mathfrak{g}}\mathfrak{)}$. For any dominant weight
$\Lambda$ and any element $w$ in $W_{\mathrm{a}}$, we have a Demazure $U_{\nu
}^{+}(\widehat{\mathfrak{g}}\mathfrak{)}$-module $V_{w}(\Lambda)$ contained in
$V(\Lambda)$ as a vector space. It also admits a crystal $B_{w}(\Lambda)$
contained in $B(\Lambda)$ as a subgraph, which thus inherits the underlying degree
$\mathrm{d}$. The general theory of Macdonald polynomials of simply laced
types ($A_{n}^{(1)}$ and $D_{n}^{(1)}$) and twisted affine types permits to
interpret their specialization at $t=0$ as certain Demazure characters of
irreducible highest weight modules associated with the affine root system
considered. Roughly speaking, this is done by setting $q=e^{-\delta}$, that is, 
by interpreting the dependence of the Demazure character on $\delta$ as a
$q$-grading.\ This crucial fact was proved in~\cite{Ion}, to which we refer for
a more complete presentation.\ 

\begin{theorem}
\label{Th_Bogdan}Assume $T_{N}^{(a)}$ is simply laced or of twisted type with
no dagger. For any dominant weight $\mu$ in $P_{+}$, there exists an affine
dominant weight $\Lambda$ and an element $w$ in $W_{\mathrm{a}}$ such that
\[
P_{\mu}^{T_{N}^{(a)}}(x;q,0)=\sum_{b\in B_{w}(\Lambda)}q^{\mathrm{d}%
(b)}x^{\mathrm{wt}(b)}.
\]
\end{theorem}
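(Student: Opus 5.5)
The plan is to deduce the statement from Ion's theorem~\cite{Ion}, which identifies nonsymmetric Macdonald polynomials at $t=0$ with affine Demazure characters, and then symmetrize.

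\textbf{Step 1: the nonsymmetric result.} In the simply laced or twisted setting (no dagger), Ion's result gives the following: for every weight $\gamma\in P$, the nonsymmetric Macdonald polynomial $E_\gamma(x;q,0)$ equals $\sum_{b\in B_{w_\gamma}(\Lambda_\gamma)}q^{\mathrm{d}(b)}x^{\mathrm{wt}(b)}$, once we set $e^{-\delta}=q$ and drop $\Lambda_0$. Here $\Lambda_\gamma$ is the affine dominant weight $\gamma_+ + \ell\Lambda_0$ at the appropriate level, taken modulo $\delta$, and $w_\gamma\in W_{\mathrm a}$ is the minimal element with $w_\gamma(\Lambda_\gamma)\equiv t_\gamma(\Lambda_\gamma)$. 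The key structural input is that the intertwiners $T_i+(\text{const})$ degenerate at $t=0$ to the Demazure operators $\pi_i$. Concretely, $E_{s_i\gamma}=\pi_i E_\gamma$ whenever $\langle\gamma,\alpha_i^\vee\rangle>0$, for $i=0,\dots,n$, and the affine operator $\pi_0$ accounts for the powers of $q$ through $\delta$.

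\textbf{Step 2: symmetrization.} The symmetric polynomial $P_\mu(x;q,0)$ is obtained from the nonsymmetric one by applying the finite Demazure operator of the longest element, $P_\mu(x;q,0)=\pi_{w_0}E_{\mu^-}(x;q,0)$ with $\mu^-=w_0\mu$ antidominant. This holds because at $t=0$ the symmetrizer $\sum_w T_w$, normalized by the Poincaré-type constant, becomes $\pi_{w_0}$, and the normalization constant reduces to $1$ at $t=0$. I would check this normalization against the unitriangularity of $P_\mu$ in the monomial basis. By Demazure's character formula, $\pi_{w_0}$ applied to the character of $B_w(\Lambda)$ gives the character of $B_{\max(w_0 * w)}(\Lambda)$, where $*$ is the Demazure product. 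So we take $w=w_0*w_{\mu^-}$ and $\Lambda=\Lambda_{\mu^-}$, and the degree function $\mathrm{d}$ is inherited from $B(\Lambda)$.

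\textbf{Step 3: matching grading and normalization.} It remains to check that the grading by $\delta$ matches the powers of $q$. In twisted types $\delta'$ may be $\tfrac12\delta$, notably in type $A_{2n}^{(2)}$, so I would fix the conventions of Table~\ref{table_spec} to make the powers of $q$ agree with counting $0$-arrows. One also has to check that the leading term $x^{\mu}$ has degree $0$; it does, since it corresponds to the extremal vector $b_{w(\Lambda)}$ reached without a net $0$-shift after normalization. The main obstacle is this bookkeeping: reconciling Ion's parametrization of the Koornwinder and Macdonald specializations in Table~\ref{table_spec} (affine root system versus its dual, which is precisely why untwisted non-simply-laced types are excluded) with the crystal-theoretic degree. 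I would handle this by comparing the Cherednik operators for the specialized Askey--Wilson parameters with the affine Hecke algebra of the dual affine root system used in~\cite{Ion}.
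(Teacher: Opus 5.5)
Your route is the paper's. The paper gives no argument of its own and simply invokes Ion~\cite{Ion}, and Ion's proof is indeed the intertwiner recursion degenerating at $t=0$ to Demazure operators $\pi_0,\dots,\pi_n$, followed by passage to the symmetric polynomial. However, the version of Ion's result you state in Step~1 is wrong, and as written Step~1 fails.

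\textbf{Main gap: the Demazure modules must be of level one.} In Ion's theorem, $\Lambda$ is a level-one fundamental weight: $\Lambda_0$, or $\Lambda_j$ for a special node $j$ when $t_\gamma$ lies only in the extended affine Weyl group and one writes $t_\gamma=w\pi_j$ with $w\in W_{\mathrm{a}}$. The classical weight $\gamma$ enters only through the translation, i.e. $w(\Lambda)\equiv t_\gamma(\Lambda_0)$ modulo $\delta$.

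With your choice $\Lambda_\gamma=\gamma_++\ell\Lambda_0$, the extremal weight $t_\gamma(\Lambda_\gamma)$ has classical part $\gamma_++\ell\gamma$, not $\gamma$. So even the leading monomial is wrong. Correcting the classical part does not rescue higher levels either, because level $\ell\ge 2$ Demazure characters are not $t=0$ Macdonald polynomials. For $\widehat{\mathfrak{sl}}_2$, the $\mathfrak{g}$-stable module $V_{s_1s_0}(2\Lambda_0)$ has extremal classical weight $-4\omega$ and character $1+q\,s_{2\omega}+q^{2}s_{4\omega}$, of dimension $9$. By contrast, $P_{4\omega}(x;q,0)$ is the graded character of the local Weyl module $W(4\omega)$, of dimension $2^4=16$. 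Level one is exactly what the paper relies on right after the statement and in Theorem~\ref{Th_ST}.

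\textbf{Secondary issue: the grading normalization in Step~3.} Your claim that the term $x^\mu$ sits in degree $0$ is false for the grading $\mathrm{d}$ (number of $0$-arrows from $b_\Lambda$). For $\Lambda=\Lambda_0$ in untwisted simply-laced type, an extremal vector of classical weight $\beta$ has $\mathrm{d}=(\beta,\beta)/2$. Hence $V(\mu)$ lies in the top degree, while $b_{\Lambda_0}$ contributes a copy of $V(0)$ in degree $0$. For $\widehat{\mathfrak{sl}}_2$, $\pi_1\pi_0e^{\Lambda_0}$ gives $s_0+q\,s_{2\omega}$, whereas $P_{2\omega}(x;q,0)=s_{2\omega}+q\,s_0$. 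The correct matching therefore needs $q\mapsto q^{-1}$ together with an overall factor $q^{(\mu,\mu)/2}$ (equivalently $q=e^{\delta}$ plus a shift, as in the paper's later paragraph on Weyl modules). The paper's statement also glosses over this normalization ("roughly speaking"), but your justification for degree $0$ does not hold.

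\textbf{Minor remark: Step~2 is superfluous.} In your convention, the level-one Demazure module whose extremal weight has antidominant classical part $\mu^-$ is already $\mathfrak{g}$-stable. So $\pi_{w_0}$ acts trivially, $w_0*w_{\mu^-}=w_{\mu^-}$, and $E_{\mu^-}(x;q,0)$ is itself $W$-invariant. The normalization you planned to check is then automatic, since both it and $P_\mu(x;q,0)$ have coefficient $1$ on $x^{\mu^-}$.
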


This interaction between the $t=0$ Macdonald specialization and the Demazure
characters theory also has a deeper interpretation in terms of crystal graphs.
In the cases we are considering here, there is indeed a strong connection
between a tensor product of column shape KR crystals and a certain Demazure crystal
inside a highest weight crystals of level $1$, that is, for dominant weights
$\Lambda$ which are affine fundamental weights.\ Here again, we only give
below a simplified version of a more precise theorem established in~\cite{ST}
(see Theorem~7.4). Before stating this theorem, we should mention that it only
holds in the cases when the considered column KR crystals are perfect.\ This
quite technical assumption (see Definition 2.4 in~\cite{ST}) is 
satisfied when $T_{N}^{(a)}$ is simply laced or of twisted type and, in
particular, even if $T_{N}^{(a)}$ is of type $B_{n}^{(1,\dag)}$, $A_{2n-1}%
^{(2,\dag)}$, or $A_{2n}^{(2,\dag)}$ (see \cite{FOS}).

\begin{theorem}
\label{Th_ST}Assume that $T_{N}^{(a)}$ is simply laced or of twisted
type.\ Then, each tensor product $B_{\mu}$ of perfect column KR crystals
admits a classical embedding\footnote{By a classical embedding, we mean a
graph embedding compatible with the classical crystal structure obtained by
removing the $0$-arrows.} in a level $1$ Demazure crystal which is also a
bijection on the associated sets of vertices. Moreover, the energy on $B$ can
be normalized in such a way it becomes equal to the grading $\mathrm{d}$
via this embedding.
\end{theorem}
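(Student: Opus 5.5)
The plan is to deduce the statement from the Kyoto path model combined with the Demazure-crystal refinement of Fourier--Okado--Schilling and Schilling--Tingley, rather than to prove anything from scratch. The first step is to check the perfectness hypothesis. For every affine type under consideration (simply laced or twisted, including the relabelled types $B_n^{(1,\dagger)}$, $A_{2n-1}^{(2,\dagger)}$ and $A_{2n}^{(2,\dagger)}$), we invoke \cite{FOS} to get that each column KR crystal $B^{(r,1)}$ occurring in $B_\mu$ is perfect of level $1$. Concretely, this provides a bijection between level-$1$ dominant weights and minimal elements. For each level-$1$ weight $\Lambda$ it yields a unique $b_\Lambda\in B^{(r,1)}$ and a level-$1$ weight $\Lambda'$ with $\varepsilon(b_\Lambda)=\Lambda$ and $\varphi(b_\Lambda)=\Lambda'$. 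The relabelling in the dagger cases only permutes node labels, so it preserves perfectness once the distinguished node $0$ is fixed.

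Next, we iterate the perfect-crystal isomorphism $B(\Lambda)\cong B^{(r,1)}\otimes B(\Lambda')$ along the columns $\mu'_m,\dots,\mu'_1$. This identifies $B(\Lambda)$ with $B_\mu\otimes B(\Lambda'')$ for suitable level-$1$ weights $\Lambda,\Lambda''$. The key input, which is \cite[Theorem 7.4]{ST} in the general form and \cite{FOS} in the column case, is that under this isomorphism the subset $B_\mu\otimes u_{\Lambda''}$ is exactly a Demazure crystal $B_w(\Lambda)$. Here $w$ is determined by the translation associated with $\mu$, and $u_{\Lambda''}$ is the highest weight element. Since $u_{\Lambda''}$ is killed by all $\tilde e_i$, the map $b\mapsto b\otimes u_{\Lambda''}$ commutes with $\tilde e_i,\tilde f_i$ for $i\neq 0$ whenever both sides are defined. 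This gives the classical embedding, which is bijective onto the vertex set of $B_w(\Lambda)$.

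Finally, we compare gradings. We will show that $\mathrm{d}(b\otimes u_{\Lambda''})-D(b)$ is constant on $B_\mu$. For this we use the standard recursive description of the energy as a sum of local energies $H$ and the ground-state path. Applying $\tilde e_0$ to $b\otimes u_{\Lambda''}$ acts on some tensor factor of $b$. By the signature rule, $D$ drops by exactly $1$ when $\tilde e_0$ acts on the leftmost factor, and is unchanged when $\tilde e_i$ acts with $i\neq 0$. This matches the degree $\mathrm{d}$, which counts $0$-arrows from $u_\Lambda$. Normalizing $D$ so that it vanishes on the ground-state path $b_\Lambda\otimes\cdots$ then gives $D=\mathrm{d}$ on the nose.

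The main obstacle is the last step, that is, the equality of energy and degree, including the ``$D$ changes only at the leftmost factor'' property. This is the content of the Schilling--Tingley analysis, and it requires the perfectness (level-$1$) hypothesis precisely to control where $\tilde e_0$ acts. I would rely on \cite[Theorem 7.4]{ST} verbatim there, checking only that its conventions for $D$ (tail versus head normalization, and the relabelled $0$-node in the dagger cases) agree with those used in Definition~\ref{Def1-d}.
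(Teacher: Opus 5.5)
Your route is the paper's. The paper gives no argument of its own for this statement: it quotes \cite[Theorem~7.4]{ST}, after noting via \cite{FOS} that the column KR crystals involved are perfect. You likewise plan to cite \cite{ST} verbatim for the essential step. The parts you argue yourself, however, contain mistakes that should be corrected or dropped.

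\emph{Classical embedding.} It does not follow from $u_{\Lambda''}$ being highest weight. In the convention you use (the one with $B(\Lambda)\cong B^{(r,1)}\otimes B(\Lambda')$), the tensor product rule gives two failures:
\begin{itemize}
\item $\tilde f_i(b\otimes u_{\Lambda''})=b\otimes \tilde f_i u_{\Lambda''}$ whenever $\varepsilon_i(b)<\langle h_i,\Lambda''\rangle$;
\item $\tilde e_i(b\otimes u_{\Lambda''})=0$ whenever $\varepsilon_i(b)\le\langle h_i,\Lambda''\rangle$, even if $\tilde e_i b\neq 0$.
\end{itemize}
So $b\mapsto b\otimes u_{\Lambda''}$ respects the $i$-arrows for all $i\neq 0$ only if $\langle h_i,\Lambda''\rangle=0$ for $i\neq 0$. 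That is, the tail must be $u_{\Lambda_0}$, which is available at level one exactly when $\Lambda_0$ has level one. You must then build the ground-state path from the right, starting from $\varepsilon(b_m)=\Lambda_0$. The top weight $\Lambda$ is forced by this, not chosen. Also, your $\varepsilon$ and $\varphi$ are interchanged: the element realizing $B(\Lambda)\cong B^{(r,1)}\otimes B(\Lambda')$ must satisfy $\varphi(b)=\Lambda$ and $\varepsilon(b)=\Lambda'$.

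\emph{Dagger types.} These are not a mere renaming. Moving the $0$-node changes the classical subalgebra: from $C_n$ to $D_n$ for $A_{2n-1}^{(2)}$, and from $C_n$ to $B_n$ for $A_{2n}^{(2)}$. With it change the family of column KR crystals and their classical decompositions, and the level function. In $A_{2n-1}^{(2,\dagger)}$ and $A_{2n}^{(2,\dagger)}$ the new $0$-node has dual Kac label $a_0^\vee=2$. So $\Lambda_0$ has level $2$, and the level-one weights sit on other nodes, where they are not invisible to the $i$-arrows ($i\neq 0$) in the sense of the previous paragraph. Hence neither perfectness nor the tail construction can be transported from the undaggered case. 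These cases must be taken directly from the literature, as the paper does with \cite{FOS}, and the hypotheses of \cite[Theorem~7.4]{ST} must be checked there rather than inferred.

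\emph{Energy versus degree.} $\mathrm{d}$ only counts the $0$-arrows that stay inside $B_\mu\otimes u_{\Lambda_0}$, namely $\tilde e_0$ applied to those $b$ with $\varepsilon_0(b)\ge 2$. What must be shown is that $D$ changes by exactly one along precisely these arrows. That is the key input of \cite{ST}, proved through local energies and the combinatorial $R$-matrix, not through the signature rule. Your ``leftmost factor'' sentence does not establish it. Since you defer to \cite{ST} at this point, the proof stands as a citation, but that sentence should not be presented as an argument.
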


We then derive our Theorem \ref{Cor_PXS} by combining Theorems \ref{Th_Bogdan}
and \ref{Th_ST}. As discussed above, for the simply laced untwisted types, this approach works  without using the results of \cite{LNSSS}.

\subsubsection{Step 3: the affine root system $T_{N}^{(a)}$ is of type
$B_{n}^{(1,\dag)}$, $A_{2n-1}^{(2,\dag)}$, or $A_{2n}^{(2,\dag)}$}

Here we proceed as in Step 2.\ As already observed, Theorem \ref{Th_ST} also
holds in these cases.\ The parameter specializations of the Koornwinder polynomials that 
correspond to these $T_{N}^{(a)}$ are not part of Theorem \ref{Th_Bogdan} as stated in \cite{Ion}.
Nevertheless, the technique of intertwiner operators for double affine Hecke algebras (on which Theorem \ref{Th_Bogdan} is based) is 
available for the full parameter Koornwinder polynomials (see, for example, \cite[\S2.6]{Ion2008}),  and, in particular, for the specializations relevant here. As in \cite{Ion}, the recursion given by the application of the intertwiner operators allows the identification of the $t=u=0$ limit of the relevant specialized Koornwinder polynomials with the graded character of the level 1 affine Demazure module specified by Theorem \ref{Th_ST}, for $T_{N}^{(a)}$ of the type considered here. This allows us to establish the validity of Theorem \ref{Th_Bogdan}, and consequently of Theorem \ref{Cor_PXS}, for  $T_{N}^{(a)}$ of type
$B_{n}^{(1,\dag)}$, $A_{2n-1}^{(2,\dag)}$, or $A_{2n}^{(2,\dag)}$.

\subsubsection{Connection with Weyl modules}

Theorem \ref{Th_Bogdan} connecting the $t=0$ specialization in Macdonald
polynomials with the affine Demazure characters does not hold for a general
untwisted non-simply laced affine root system. In fact the relevant general
context permitting to interpret these specializations as graded characters is
that of Weyl modules for current algebras.\ More precisely, in their study of
the category of finite-dimensional representations of quantum affine Lie
algebras $U_{\nu}(\widehat{\mathfrak{g}})$, Chari and Pressley
\cite{CP} defined some universal highest weight objects in this
category, called (local) Weyl modules. The Weyl modules are cyclic
indecomposable modules that play the role of standard objects in the category.
Any finite-dimensional cyclic indecomposable module is a quotient of a Weyl
module. The theory of Weyl modules transfers to the classical limit
$\nu\rightarrow1$, where it becomes the theory of Weyl modules for current
algebras; here, they play the role of standard objects in the category of
finite-dimensional \emph{graded} representations of the current algebra. The
grading of the current algebra representations arises from the action of the
scaling (sometimes called loop rotation) element in $\widehat{\mathfrak{g}}$
(see \S 2.1-2.3 in \cite{CIK}). At the level of characters, the grading is
thus captured by the imaginary root $\delta$: a $\widehat{\mathfrak{g}}%
$-character is seen as a graded $\mathfrak{g}$-character by denoting
$q=e^{\delta}$. If $T_{N}^{(a)}$ is twisted or simply laced untwisted, the
current algebra Weyl modules are precisely the symmetric (i.e. $\mathfrak{g}%
$-stable) Demazure modules of $V(\Lambda)$, for $\Lambda$ an affine dominant
weight of level one. Under the same constraint on $T_{N}^{(a)}$, the graded
characters of symmetric level-one Demazure modules were shown to be precisely
the $t=0$ specialization of the symmetric Macdonald polynomials of type
$T_{N}^{(a)}$~\cite{Ion}. What is true without any constraint on $T_{N}^{(a)}$
is that the $t=0$ specializations of the symmetric Macdonald polynomials of
type $T_{N}^{(a)}$ are the graded characters of the Weyl modules of the
current algebra of type $T_{N}^{(a)}$, see Theorem 4.2 in \cite{CIK}. A series of
results, culminating with the work of Lenart-Naito-Sagaki-Schilling-Shimozono~\cite{LNSSS}, shows that the Weyl modules have a graded crystal basis whose crystal is the tensor product of column KR-crystals; the energy function can be normalized so that it is identified with the grading.

\medskip

To facilitate the identification of Weyl module characters from Koornwinder specializations
in the forthcoming sections, we illustrate the specialization at $t=u=0$ in Table~\ref{table_spec_Demaz}.

\begin{table}[ht]
\[
\begin{array}
[c]{@{}l@{\hskip20pt}l@{\hskip20pt}l@{\hskip20pt}l@{\hskip20pt}l}\hline
\text{Macdonald type} & a & b & c & d\\\hline
D_{n}^{(1)} & 1 & -1 & q^{\frac{1}{2}} & -q^{\frac{1}{2}}\\
B_{n}^{(1)} & 0 & -1 & q^{\frac{1}{2}} & -q^{\frac{1}{2}}\\
B_{n}^{(1,\dagger)} & 1 & -1 & 0 & -q^{\frac{1}{2}}\\
C_{n}^{(1)} & 0 & 0 & 0 & 0\\
A_{2n-1}^{(2)} & 0 & 0 & q^{\frac{1}{2}} & -q^{\frac{1}{2}}\\
A_{2n-1}^{(2,\dagger)} & 1 & -1 & 0 & 0\\
D_{n+1}^{(2)} & 0 & -1 & 0 & -q^{\frac{1}{2}}\\
A_{2n}^{(2)} & 0 & -1 & 0 & 0\\\hline
\end{array}
\]
\tablecaption{Specializations of the Koornwinder polynomials yielding Weyl module characters.}
\label{table_spec_Demaz}%
\end{table}

\section{Dual Cauchy identity and the \texorpdfstring{$X=K$}{X=K} phenomenon in type  \texorpdfstring{$A_{n-1}^{(1)}$}{A{n-1}(1)}   } 
\label{sec_type_A}

In this Section, we reprove the equality between Kostka-Foulkes polynomials (in type
$A_{m-1})$ for a pair of dominant weights of level $n$ (i.e. a pair of
partitions contained in the rectangle $(n^{m})$) and 1-d sums corresponding to
tensor product of $m$ column KR crystals of affine type $A_{n-1}^{(1)}$.\ This
result was initially obtained by Nakayashiki and Yamada \cite{NY1997} in a purely
combinatorial way from the description of the Kostka-Foulkes polynomials in terms of
Lascoux-Sch\"{u}tzenberger's charge on semistandard tableaux \cite{LS1978} and that
of 1-d sums as generating functions for the energy statistic on finite affine
crystals. In contrast the proof we propose here is based on the dual Cauchy
formula for symmetric Macdonald polynomials. In particular, it will be simpler
to work with the ring of symmetric functions rather than in the character ring
of $\mathfrak{sl}_{n}$. We refer the reader to the classical book of Macdonald
\cite{mac} for more details on the notions introduced in this section.\ It is
written to be be read quite independently of the rest of the paper and we hope
it will help the reader to understand the main ideas which will be reinvested
in the study of the $X=K$ phenomenon beyond type $A$.

\subsection{Symmetric polynomials and Macdonald polynomials}

Let $\mathrm{Sym}[x_{1},\ldots,x_{n}]$ the ring of symmetric polynomials over
the rational functions in $\mathbb{Q}[q,t]$ where $q$ and $t$ are two
indeterminates.\ We denote by $\mathfrak{S}_{n}$ the symmetric group on the
set $\{1,\ldots,n\}$.\ It acts on $\mathbb{Z}^{n}=%
{\textstyle\bigoplus\limits_{i=1}^{n}}\mathbb{Z\varepsilon}_{i}$ by permutation of the coordinates. A partition
$\lambda$ of length at most $n$ is a sequence $\lambda=(\lambda_{1}\geq
\cdots\geq\lambda_{n}\geq0)$ and will be identified with its Young diagram.
The length of $\lambda$ is the number of its nonzero parts $\lambda_{i}$.\ We
denote by $\mathcal{P}_{n}$ the set of partitions with length at most $n$. The
orbits of the action of $\mathfrak{S}_{n}$ on $\mathbb{Z}^{n}$ are labelled by
the partitions of length at most $n$. For any such partition $\lambda$, we
define the monomial symmetric function by
\[
m_{\lambda}(x)=\sum_{\beta\in\mathfrak{S}_{n}\cdot\lambda}x^{\beta}\,,%
\]
where $\mathfrak{S}_{n}\cdot\lambda$ is the orbit of $\lambda$ under the
action of $\mathfrak{S}_{n}$ on $\mathbb{Z}^{n}$ (which so extends to
$\mathbb{Z}[x_{1},\ldots,x_{n}]$) and for any $\beta=(\beta_{1},\ldots
,\beta_{n})$ in $\mathbb{Z}^{n}$, we have $x^{\beta}=x_{1}^{\beta_{1}}\cdots
x_{n}^{\beta_{n}}$. The family $\{m_{\lambda}(x),\lambda\in\mathcal{P}_{n}\}$
is a basis of $\mathrm{Sym}[x_{1},\ldots,x_{n}]$.\ 

Now put $\partial=(n-1,\ldots,2,1)$. For any $\beta$ set%
\[
a_{\beta}(x)=\sum_{\sigma\in\mathfrak{S}_{n}}(-1)^{\ell(\sigma)}%
x^{\sigma(\beta)}\,,%
\]
where $\ell$ is the length function of $\mathfrak{S}_{n}$, that is the number
of elementary reflections $s_{i}=(i,i+1)$ appearing in any minimal length
decomposition of the permutation $\sigma$.\ For any $\lambda$ in
$\mathcal{P}_{n}$, define the Schur polynomial by%
\[
\mathsf{s}_{\lambda}(x)=\frac{a_{\lambda+\partial}(x)}{a_{\partial}(x)}.
\]
Then, the family $\{\mathsf{s}_{\lambda}(x),\lambda\in\mathcal{P}_{n}\}$ is
another basis of $\mathrm{Sym}[x_{1},\ldots,x_{n}]$. The Kostka numbers are
such that%
\begin{equation}
\mathsf{s}_{\lambda}(x)=\sum_{\mu\in\mathcal{P}_{n}}K_{\lambda,\mu}\,m_{\mu}(x). \label{SinM}
\end{equation}
The Kostka number $K_{\lambda,\mu}$ is in fact a nonnegative integer equal to
the dimension of the weight space of weight $\mu$ in the finite-dimensional
irreducible representation of the Lie algebra $\mathfrak{gl}_{n}(\mathbb{C)}$
with highest weight $\lambda$ (see \cite{FH} for an introduction on the
representation theory of Lie algebras). We can endow $\mathrm{Sym}%
[x_{1},\ldots,x_{n}]$ with a pairing $\langle\cdot,\cdot\rangle$ such that
$\langle\mathsf{s}_{\lambda}(x),\mathsf{s}_{\mu}(x)\rangle_{n}=\delta
_{\lambda,\mu}$ for any pair of partitions $(\lambda,\mu)$ in $\mathcal{P}%
_{n}^{2}$.\ 

The Hall-Littlewood polynomials can be regarded as $t$-interpolations between
the monomials and the Schur polynomials. They are defined by
\[
P_{\lambda}(x;t)=\frac{1}{\mathfrak{S}_{\lambda}(t)}\frac{\sum_{\sigma
\in\mathfrak{S}_{n}}(-1)^{\ell(w)}\sigma\left(  \prod_{1\leq i<j\leq
n}(1-t\frac{x_{j}}{x_{i}})x^{\lambda+\partial}\right)  }{a_{\partial}}\,,
\]
where $\mathfrak{S}_{\lambda}(t)=\sum_{\sigma\in\mathfrak{S}_{n}}%
t^{\ell(\sigma)}.$ We thus have $P_{\lambda}(x,0)=\mathsf{s}_{\lambda}(x)$ and
$P_{\lambda}(x,1)=m_{\lambda}(x)$ and in particular $K_{\lambda,\mu
}(1)=K_{\lambda,\mu}$ for any $\lambda,\mu$ in $\mathcal{P}_{n}$. Although
this is not obvious on their definition, they are indeed polynomials and their
coefficients belong to $\mathbb{Z}[t].$ They also yield a basis $\{P_{\lambda
}(x;t),\lambda\in\mathcal{P}_{n}\}$ of $\mathrm{Sym}[x_{1},\ldots,x_{n}]$.
This permits to define the $t$-Kostka-Foulkes polynomials by setting%
\begin{equation}
\mathsf{s}_{\lambda}(x)=\sum_{\mu\in\mathcal{P}_{n}}K_{\lambda,\mu
}(t)P_{\lambda}(x;t). \label{SinP}%
\end{equation}
Observe that both decompositions (\ref{SinM}) and (\ref{SinP}) are
unitriangular for the dominant order on $\mathcal{P}_{n}$. This means that we
have $K_{\lambda,\lambda}(t)=K_{\lambda,\lambda}=1$ and $K_{\lambda,\mu}(t)=0$
unless $\lambda\geq\mu,$ that is unless $\lambda-\mu$ in a nonnegative integer
combination of the simple roots $\alpha_{i}=\varepsilon_{i}-\varepsilon_{i+1}$
corresponding to the root system of type $A_{n-1}$. Since both families of
Schur and Hall-Littlewood polynomials have coefficients in $\mathbb{Z}[t]$, the Kostka
polynomials also belong to $\mathbb{Z}[t]$.\ In fact they have nonnegative
coefficients and we will see that among the many ways to prove this
fundamental result, one of them is to equate each Kostka-Foulkes polynomial with a 1-d
sum having by definition nonnegative integer coefficients as the generating
series of some particular vertices in affine crystals for the energy statistic.

The definition of the Macdonald polynomials is more involved.\ They can be
regarded as $q$-deformations $P_{\lambda}(x;q,t)$ of the Hall-Littlewood polynomials
$P_{\lambda}(x;t)$ with $\lambda$ in $\mathcal{P}_{n}$. In fact they are
defined as the unique basis of $\mathrm{Sym}[x_{1},\ldots,x_{n}]$
unitriangular on the monomial basis $\{m_{\lambda}(x),\lambda\in
\mathcal{P}_{n}\}$ and satisfying the orthogonality condition $\langle
P_{\lambda}(x;q,t),P_{\mu}(x;q,t)\rangle_{q,t}=\delta_{\lambda,\mu}$ where
$\langle\cdot,\cdot\rangle_{q,t}$ is a $(q,t)$-deformation of the previous
scalar product $\langle\cdot,\cdot\rangle$ making orthonormal the basis of
Schur functions. We do not really need the very definition of the symmetric
Macdonald polynomials in what follows but rather some of their crucial
properties. In particular, for any partition $\lambda$ in
$\mathcal{P}_{n}$ we have%
\[
P_{\lambda}(x,0,t)=P_{\lambda}(x;t)\,,
\]
that is, the Hall-Littlewood polynomials are the $q=0$ specializations of the Macdonald
polynomials. 

\subsection{The dual Cauchy identity for Hall-Littlewood polynomials}

Observe that for any partition $\lambda$ in the rectangle $(m^{n})$, its
conjugate partition is in the rectangle $(n^{m})$. We recall the theorem by
Nakayashiki and Yamada.

\begin{theorem}\label{thm_An-11}
\label{XvsK_type_A} For all $\lambda,\mu$ two partitions in the rectangle
$(m^{n})$, we have%
\[
X_{\lambda,\mu}(q)=K_{\lambda^{\prime},\mu^{\prime}}(q)\,,
\]
where $X_{\lambda,\mu}(q)$ is the column 1-d sum of type $A_{n-1}^{(1)}$
determined by the columns of the partition $\mu$ (see Definition \ref{Def1-d})
and $K_{\lambda^{\prime},\mu^{\prime}}(q)$ the Kostka-Foulkes polynomial of type
$A_{m-1}$ associated with the pair of partitions $(\lambda^{\prime}%
,\mu^{\prime})$.
\end{theorem}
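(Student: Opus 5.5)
The plan is to start from the type $A$ dual Cauchy identity for Macdonald polynomials and extract coefficients after two specializations. The identity reads
\[
\prod_{i=1}^{n}\prod_{j=1}^{m}(1+x_iy_j)=\sum_{\lambda\subseteq(m^n)}P_{\lambda}(x;q,t)\,P_{\lambda'}(y;t,q)
\]
(see \cite{mac}, Chapter VI). Here $P_\lambda(x;q,t)$ is a Macdonald polynomial in $n$ variables, and $P_{\lambda'}(y;t,q)$ is one in $m$ variables with $q$ and $t$ exchanged. This is the type $A$ shadow of the Koornwinder identity \eqref{Cauchy_Koorn}.

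\textbf{Specializing $t=0$.} The left-hand side does not depend on $q$ or $t$, so we may set $t=0$. In the $x$-variables, $P_\mu(x;q,0)$ is the $t=0$ Macdonald polynomial of type $A_{n-1}^{(1)}$. By Theorem \ref{Cor_PXS} (type $A_{n-1}^{(1)}$ is simply laced untwisted, so Step 1 or Step 2 applies), it expands as
\[
P_\mu(x;q,0)=\sum_{\lambda} X_{\lambda,\mu}(q)\,\mathsf{s}_\lambda(x).
\]
In the $y$-variables, $P_{\mu'}(y;0,q)$ is the Hall--Littlewood polynomial $P_{\mu'}(y;q)$ of rank $m$ with parameter $q$. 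The identity therefore becomes
\[
\prod_{i,j}(1+x_iy_j)=\sum_{\mu\subseteq(m^n)}\sum_{\lambda}X_{\lambda,\mu}(q)\,\mathsf{s}_\lambda(x)\,P_{\mu'}(y;q).
\]

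\textbf{Specializing $q=0$.} Setting $q=0$ in the original identity gives the classical dual Cauchy identity
\[
\prod(1+x_iy_j)=\sum_\lambda \mathsf{s}_\lambda(x)\,\mathsf{s}_{\lambda'}(y).
\]
Since the left-hand side is independent of the parameters, we may substitute \eqref{SinP} in rank $m$, namely $\mathsf{s}_{\lambda'}(y)=\sum_{\nu}K_{\lambda',\nu}(q)P_{\nu}(y;q)$, to obtain
\[
\prod(1+x_iy_j)=\sum_{\lambda}\sum_{\nu}K_{\lambda',\nu}(q)\,\mathsf{s}_\lambda(x)\,P_\nu(y;q).
\]

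\textbf{Comparing coefficients.} The products $\mathsf{s}_\lambda(x)P_\nu(y;q)$ form a basis of $\mathrm{Sym}[x]\otimes\mathrm{Sym}[y]$, so we may compare the coefficients of $\mathsf{s}_\lambda(x)P_{\mu'}(y;q)$ in the two expansions. This yields $X_{\lambda,\mu}(q)=K_{\lambda',\mu'}(q)$ for all $\lambda,\mu\subseteq(m^n)$. A quick degree and rectangle check is needed here. Only $\lambda\subseteq(m^n)$ appear on the Schur side, since $\mathsf{s}_\lambda(x)\neq0$ requires $\ell(\lambda)\le n$, and $\mathsf{s}_{\lambda'}(y)\neq0$ requires $\lambda_1\le m$. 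Correspondingly, $K_{\lambda',\nu}(q)\neq 0$ forces $\nu\le\lambda'$, so $\nu$ has at most $m$ parts and parts at most $n$; hence $\nu=\mu'$ for some $\mu\subseteq(m^n)$.

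The main obstacle is the bookkeeping of conventions rather than any new idea, and two points need checking. First, the $t=0$ specialization of $P_\mu(x;q,t)$ in $n$ variables must match the normalization in Theorem \ref{Cor_PXS}: the paper works with $\mathfrak{gl}_n$ rather than $\mathfrak{sl}_n$, and the KR tensor product must be $B_\mu$ indexed by the columns of $\mu$ with the energy normalized as in Theorem \ref{Th_ST}. Second, one must confirm that specializing the two sides of the Macdonald dual Cauchy identity at $t=0$ is legitimate. This holds because the coefficients are polynomial in $q,t$ for $\lambda$ in the rectangle, which follows from the integral form or from the Koornwinder identity in the type $A$ limit. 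I would verify both points on a small case, e.g.\ $n=m=2$, before writing the final argument.
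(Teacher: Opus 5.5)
Your proposal is correct and is essentially the paper's argument: the paper also sets $t=0$ in Macdonald's dual Cauchy identity, uses Theorem~\ref{Cor_PXS} on the $x$-side and \eqref{SinP} on the $y$-side, and compares with \eqref{Cauchy_A}; the only difference is that the paper routes the coefficient comparison through the dual basis $\mathsf{Q}_\mu(x;q)$ and Lemma~\ref{Lemma_Qins} (obtaining $P_\mu(x;q,0)=\mathsf{Q}_\mu(x;q)$), whereas you compare coefficients of $\mathsf{s}_\lambda(x)P_{\mu'}(y;q)$ directly, which is the same computation. One correction to your justification of the $t\to 0$ limit: the coefficients of $P_\lambda(x;q,t)$ are rational, not polynomial (e.g.\ $P_{(2)}=m_{(2)}+\frac{(1+q)(1-t)}{1-qt}\,m_{(1,1)}$), and the limit is legitimate because the denominators are products of factors $1-q^at^b$ with $(a,b)\neq(0,0)$, none of which vanishes at $t=0$ (likewise, with $q$ and $t$ swapped, for $P_{\lambda'}(y;t,q)$).
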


The dual Cauchy identity can be written
\begin{equation}
\prod_{\substack{1\leq i\leq n\\1\leq j\leq m}}(1+x_{i}y_{j})=\sum
_{\lambda\subseteq(m^{n})}\mathsf{s}_{\lambda}(x)\,\mathsf{s}_{\lambda^{\prime}%
}(y). \label{Cauchy_A}%
\end{equation}
Then, as explained in \cite[Chapter VI, $(2.7)$]{mac}, any pair of bases
$\left(  \,(u_{\lambda}(x))_{\lambda\subseteq(m^{n})}\,,\,(v_{\lambda^{\prime
}})_{\lambda\subseteq(m^{n})}\,\right)  $ verifying
\begin{equation}
\left\langle u_{\lambda},v_{\mu^{\prime}}\right\rangle =\delta_{\lambda,\mu
}\text{ for any }(\lambda,\mu)\ \text{in }(m^{n})\times(m^{n})
\label{dual_bases_A}%
\end{equation}
yields a Cauchy-type identity%

\begin{equation}
\prod_{\substack{1\leq i\leq n\\1\leq j\leq m}}(1+x_{i}y_{j})=\sum
_{\lambda\subseteq(m^{n})}u_{\lambda}(x)\,v_{\lambda^{\prime}}(y)
\label{Cauchy_A_gen}\,,%
\end{equation}
and conversely. We can thus introduce the basis $\{\mathsf{Q}_{\lambda
}(x;q)\mid\lambda\subseteq(m^{n})\}$ such that
\begin{equation}
\left\langle \mathsf{Q}_{\lambda}(x;q),P_{\mu^{\prime}}(y,q\right\rangle
=\delta_{\lambda,\mu}\text{ for any }(\lambda,\mu)\subseteq(m^{n})\times(m^{n})
\label{DefQ'}%
\end{equation}
and get the dual Cauchy identity.

\begin{equation}
\prod_{\substack{1\leq i\leq n\\1\leq j\leq m}}(1+x_{i}y_{j})=\sum
_{\mu\subseteq(m^{n})}\mathsf{Q}_{\mu}(x;q)P_{{\mu^{\prime}}}(y,q).
\label{Cauchy_A_HL}
\end{equation}
One can observe here that, with the notation of Macdonald's book, we have
$\mathsf{Q}_{\mu}(x;q)=\omega\left(  Q_{\mu^{\prime}}(x;q)\right)  $, i.e. the
polynomial $\mathsf{Q}_{\mu}(x;q)$ is the just the image of the modified
Hall-Littlewood polynomial $Q_{\mu}^{\prime}(x;q)$ under the involution
$\omega$ in the ring of symmetric functions.

\begin{lemma}
\label{Lemma_Qins}We have for any partition $\mu\subseteq(m^{n})$%
\[
\mathsf{Q}_{\mu}(x;q)=\sum_{\lambda\subseteq(m^{n})}K_{\lambda^{\prime}%
,\mu^{\prime}}(q)s_{\lambda}(x).
\]

\end{lemma}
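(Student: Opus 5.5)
The plan is to read off the Schur expansion of $\mathsf{Q}_{\mu}(x;q)$ from the duality \eqref{DefQ'}, using the characterization \eqref{dual_bases_A} of dual bases via the dual Cauchy identity. Concretely, I write the unknown expansion as $\mathsf{Q}_{\mu}(x;q)=\sum_{\lambda\subseteq(m^n)} c_{\lambda,\mu}(q)\,\mathsf{s}_{\lambda}(x)$. I expand the Hall--Littlewood polynomials in the $y$ variables via the inverse Kostka--Foulkes matrix: by \eqref{SinP} in $m$ variables, $\mathsf{s}_{\nu}(y)=\sum_{\kappa}K_{\nu,\kappa}(q)P_{\kappa}(y;q)$. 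Inverting this unitriangular system gives
\[
P_{\kappa}(y;q)=\sum_{\nu}(K^{-1})_{\nu,\kappa}(q)\,\mathsf{s}_{\nu}(y),
\]
with $\nu,\kappa\subseteq(n^m)$. Here I use that, in $m$ variables, dominance-order triangularity keeps everything inside the partitions with at most $m$ parts, and the bound on $|\nu|=|\kappa|$ together with $\nu\geq\kappa$ forces $\nu_1\le n$ whenever $\kappa_1\le n$. Hence the index set $\{\nu\subseteq(n^m)\}$ is stable under the triangular change of basis.

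Next I substitute this into the Cauchy identity \eqref{Cauchy_A_HL} and compare with \eqref{Cauchy_A}. This gives
\[
\sum_{\mu}\mathsf{Q}_{\mu}(x;q)\sum_{\nu}(K^{-1})_{\nu,\mu'}(q)\,\mathsf{s}_{\nu}(y)=\sum_{\lambda}\mathsf{s}_{\lambda}(x)\,\mathsf{s}_{\lambda'}(y).
\]
Since $\{\mathsf{s}_{\nu}(y)\}_{\nu\subseteq(n^m)}$ is linearly independent, I extract the coefficient of $\mathsf{s}_{\lambda'}(y)$ to get $\sum_{\mu}(K^{-1})_{\lambda',\mu'}(q)\,\mathsf{Q}_{\mu}(x;q)=\mathsf{s}_{\lambda}(x)$ for every $\lambda\subseteq(m^n)$. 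Multiplying by the matrix $K$ and summing over $\lambda$ then yields
\[
\mathsf{Q}_{\mu}(x;q)=\sum_{\lambda}K_{\lambda',\mu'}(q)\,\mathsf{s}_{\lambda}(x),
\]
which is the claim. Alternatively, one can argue directly with the pairing: \eqref{dual_bases_A} is understood with respect to the pairing $\langle \mathsf{s}_\lambda(x),\mathsf{s}_{\nu'}(y)\rangle=\delta_{\lambda,\nu}$ implicit in \eqref{Cauchy_A}. Pairing the ansatz for $\mathsf{Q}_\mu$ against $P_{\kappa'}(y;q)$ expanded as above, the orthogonality \eqref{DefQ'} says the matrix $(c_{\lambda,\mu})$ is inverse-transpose to $(K^{-1})_{\nu',\kappa'}$, so $c_{\lambda,\mu}=K_{\lambda',\mu'}(q)$.

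The main obstacle is the bookkeeping of index sets: the truncation to the rectangle, i.e.\ justifying that inverting the Kostka--Foulkes matrix restricted to partitions inside $(n^m)$ coincides with the restriction of the full inverse. I would settle this via the triangularity argument given in the first paragraph. I would also double-check that the pairing in \eqref{DefQ'}, between functions of $x$ and of $y$, is the one induced by \eqref{Cauchy_A}, so that the conjugation $\lambda\mapsto\lambda'$ lands where stated.
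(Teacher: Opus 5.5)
Your strategy of extracting the Schur expansion of $\mathsf{Q}_\mu$ from the duality \eqref{DefQ'} together with \eqref{SinP} is sound. However, your inversion step has transposed indices, and this breaks exactly the step you single out as the main obstacle.

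From $\mathsf{s}_\nu=\sum_\kappa K_{\nu,\kappa}(q)P_\kappa$ one gets $P_\kappa(y;q)=\sum_{\nu}(K^{-1})_{\kappa,\nu}(q)\,\mathsf{s}_\nu(y)$, not $\sum_\nu (K^{-1})_{\nu,\kappa}(q)\,\mathsf{s}_\nu(y)$. With your indexing the sum runs over $\nu\ge\kappa$. Your claim that $\nu\ge\kappa$ forces $\nu_1\le n$ is then false, since dominance gives $\nu_1\ge\kappa_1$. Concretely, take $n=1$, $m=2$, $\kappa=(1,1)\subseteq(n^m)$. Since $K_{(2),(1,1)}(q)=q$, your formula would give $P_{(1,1)}(y;q)=\mathsf{s}_{(1,1)}(y)-q\,\mathsf{s}_{(2)}(y)$, whereas in fact $P_{(1,1)}(y;q)=y_1y_2=\mathsf{s}_{(1,1)}(y)$.

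The same transposition carries through to the end. From your coefficient identity $\sum_\mu (K^{-1})_{\lambda',\mu'}(q)\,\mathsf{Q}_\mu=\mathsf{s}_\lambda$, inverting actually gives $\mathsf{Q}_\mu=\sum_\lambda K_{\mu',\lambda'}(q)\,\mathsf{s}_\lambda$, which is the transpose of the claim. You reach the correct formula only because the final ``multiply by $K$'' step silently transposes back. The same happens in your ``inverse-transpose'' variant.

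The repair is immediate. Write $P_\kappa=\sum_{\nu\le\kappa}(K^{-1})_{\kappa,\nu}\,\mathsf{s}_\nu$. Triangularity then gives $\nu_1\le\kappa_1\le n$, so the rectangle is stable. The coefficient of $\mathsf{s}_{\lambda'}(y)$ gives $\sum_\mu (K^{-1})_{\mu',\lambda'}(q)\,\mathsf{Q}_\mu=\mathsf{s}_\lambda$. Multiplying by $K_{\lambda',\rho'}(q)$ and summing over $\lambda$ yields $\mathsf{Q}_\rho=\sum_\lambda K_{\lambda',\rho'}(q)\,\mathsf{s}_\lambda$.

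The paper's proof is shorter and never inverts anything. It computes the coefficient as $\langle\mathsf{Q}_\mu(x;q),\mathsf{s}_{\lambda'}(y)\rangle$, expands $\mathsf{s}_{\lambda'}(y)=\sum_{\nu'}K_{\lambda',\nu'}(q)P_{\nu'}(y;q)$ by \eqref{SinP}, and applies \eqref{DefQ'}. There $K_{\lambda',\nu'}(q)\ne0$ forces $\nu'\le\lambda'$, hence $\nu'\subseteq(n^m)$, so the truncation issue you worried about never arises.
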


\begin{proof}
Let us set%
\[
\mathsf{Q}_{\mu}(x,q)=\sum_{\lambda\subseteq(m^{n})}a_{\lambda,\mu}s_{\lambda
}(x).
\]
Then we can write%
\[
a_{\lambda,\mu}=\langle\mathsf{Q}_{\mu}(x;q),s_{\lambda^{\prime}}%
(y)\rangle=\sum_{\nu^{\prime}}\langle\mathsf{Q}_{\mu}(x;q),P_{\nu^{\prime}%
}(x;q)\rangle K_{\lambda^{\prime},\nu^{\prime}}(q)=K_{\lambda^{\prime}%
,\mu^{\prime}}(q)\,,
\]
where the first and last equalities follow from (\ref{SinP}) and
(\ref{DefQ'}), respectively.
\end{proof}

\subsection{The dual Cauchy identity for Macdonald polynomials}

In \cite[VI, $(5.4)$ p. 329]{mac}, the following dual Cauchy identity for the
Macdonald polynomials is established:%

\[
\prod_{\substack{1\leq i\leq n\\1\leq j\leq m}}(1+x_{i}y_{j})=\sum
_{\lambda\subseteq(m^{n})}P_{\lambda}(x;q,t)\,P_{\lambda^{\prime}}(y;t,q).
\]
Now, we let $t$ tends to $0$ in the above expression. According to Theorem
\ref{Th_Bogdan}, the polynomial $P_{\lambda}(x;q,t)$ on the left specializes
to the Demazure character $P_{\lambda}(x,q,0)$, whereas the polynomial on the
right specializes to the Hall-Littlewood $P_{\lambda^{\prime}}(y,q)$. We thus obtain%

\[
\prod_{\substack{1\leq i\leq n\\1\leq j\leq m}}(1+x_{i}y_{j})=\sum
_{\lambda\subseteq(m^{n})}P_{\lambda}(x;q,0)\,P_{\lambda^{\prime}}(y;q).
\]
Comparing with (\ref{Cauchy_A_HL}), this yields
\[
P_{\lambda}(x;q,0)=\mathsf{Q}_{\lambda}(x;q).
\]
We can now use Lemma \ref{Lemma_Qins} and Theorem \ref{Cor_PXS} to get the
$X=K$ equality of Theorem \ref{XvsK_type_A}, namely%
\[
X_{\lambda,\mu}(q)=K_{\lambda^{\prime},\mu^{\prime}}(q)\text{ for any
}(\lambda,\mu)\text{ in }(m^{n})\times(m^{n}).
\]

\section{Dual Cauchy identity and the \texorpdfstring{$X=K$}{X=K} phenomenon in type \texorpdfstring{$A_{2n-1}^{(2)}$}{A{2n-1}(2)}}
\label{sec_type_C}

\label{Sec_A_(2n-1)^(2)}In this section, we prove that the 1-d sums of level
$m$ and type $A_{2n-1}^{(2)}$ coincide with the Lusztig $q$-analogues of type
$C_{m}$ indexed by pairs of partitions in the rectangle $(n^{m})$.

\subsection{Duality and main theorem}

Let $\lambda,\mu\subseteq(m^{n})$. Recall the following notation.

\begin{itemize}
\item $X_{\lambda,\mu}^{A_{2n-1}^{(2)}}(q)$ is the one-dimensional sum associated
with the dominant weight $\lambda$ of the affine crystal $B^{(\mu_{1}^{\prime
},1)}\otimes\cdots\otimes B^{(\mu_{m}^{\prime},1)}$. Here $B^{(k,1)}$ denotes
the KR crystal of type $A_{2n-1}^{(2)}$ and column shape of height $k$. By
removing the $0$-arrows in $B^{(k,1)}$, we get a type $C_{n}$-crystal
isomorphic to the sum%
\[
B(\omega_{k})\oplus B(\omega_{k-2})\oplus\cdots\oplus B(\omega
_{k\operatorname{mod}2})
\]
where $B(\omega_{0})=B(0)$ is the crystal of the trivial representation (one
vertex with no arrow).

\item $K_{\widehat{\lambda},\widehat{\mu}}^{C_{m}}(q)$ is the Kostka-Foulkes polynomial
of type $C_{m}$ associated with the partitions $\widehat{\lambda}$ and $\widehat{\mu}$.
\end{itemize}

The goal of this section is to establish the following theorem.

\begin{theorem}\label{thm_A2n-12}
\label{XvsK_type_C} For all $\lambda,\mu\subseteq(m^{n})$, we have
\[
X_{\lambda,\mu}^{A_{2n-1}^{(2)}}(q)=K_{\widehat{\lambda},\widehat{\mu}}^{C_{m}}(q)\,.
\]

\end{theorem}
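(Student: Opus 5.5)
We will mimic the type $A$ argument of \Cref{sec_type_A}, replacing Macdonald's dual Cauchy identity with Mimachi's identity \eqref{Cauchy_Koorn}. We specialize the Askey–Wilson parameters to the $A_{2n-1}^{(2)}$ values of \Cref{table_spec} with $u=t$, namely $(a,b,c,d)=(t^{1/2},-t^{1/2},q^{1/2},-q^{1/2})$, and then let $t\to0$.

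On the $x$-side (rank $n$, parameters $(q,t)$), the polynomial $P_\lambda(x;a,b,c,d;q,t)$ becomes $P^{A_{2n-1}^{(2)}}_\lambda(x;q,t)$. At $t=0$ this is the Weyl module character of \Cref{table_spec_Demaz}. By \Cref{Cor_PXS} it equals $\sum_\nu X^{A_{2n-1}^{(2)}}_{\nu,\lambda}(q)\,s^{C_n}_\nu(x)$.

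On the $y$-side (rank $m$), the roles of $q$ and $t$ are swapped. Using the permutation invariance in $(a,b,c,d)$, the specialization reads $(q^{1/2},-q^{1/2},t^{1/2},-t^{1/2})$, with "$q$-parameter" $t$. Setting $t=0$ gives $(q^{1/2},-q^{1/2},0,0)$, which by \Cref{table_spec_swap} (row $A_{2m-1}^{(2)},C_m$ with $u=q$) is the Hall–Littlewood polynomial $P^{C_m}_{\widehat\lambda}(y;q,q)=P^{C_m}_{\widehat\lambda}(y;q)$. The left-hand side of \eqref{Cauchy_Koorn} does not involve $a,b,c,d,q,t$. We therefore obtain
\[
\prod_{i,j}(x_i+x_i^{-1}-y_j-y_j^{-1})=\sum_{\lambda\subseteq(m^n)}(-1)^{|\widehat\lambda|}\sum_{\nu}X_{\nu,\lambda}(q)\,s^{C_n}_\nu(x)\,P^{C_m}_{\widehat\lambda}(y;q).
\]
One point must be checked: the specializations commute with the limit $t\to0$. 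That is, the Koornwinder coefficients have no poles there. This follows because both limits are known to exist, as Weyl module characters and as Hall–Littlewood polynomials respectively.

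Next we compare with the same identity at the Schur level. Specializing \emph{both} sides so that they become Weyl characters should yield the classical $(Sp_{2n},Sp_{2m})$ dual Howe identity
\[
\prod_{i,j}(x_i+x_i^{-1}-y_j-y_j^{-1})=\sum_{\nu\subseteq(m^n)}(-1)^{|\widehat\nu|}s^{C_n}_\nu(x)\,s^{C_m}_{\widehat\nu}(y).
\]
This can be obtained from \eqref{Cauchy_Koorn} at $q=t=0$ with $a=b=c=d=0$, as in the $C_n^{(1)}$ row of \Cref{table_spec_Demaz}, where both sides are symplectic Schur functions.

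We then expand $s^{C_m}_{\widehat\nu}(y)=\sum_{\widehat\lambda}K^{C_m}_{\widehat\nu,\widehat\lambda}(q)P^{C_m}_{\widehat\lambda}(y;q)$. Since the $s^{C_n}_\nu(x)$ and the $P^{C_m}_\gamma(y;q)$ are bases, comparing coefficients of $s^{C_n}_\nu(x)P^{C_m}_{\widehat\lambda}(y;q)$ gives
\[
(-1)^{|\widehat\lambda|}X_{\nu,\lambda}(q)=(-1)^{|\widehat\nu|}K^{C_m}_{\widehat\nu,\widehat\lambda}(q).
\]
To conclude, the signs must agree. Both sides vanish unless $\widehat\nu-\widehat\lambda$ lies in the $C_m$ root lattice. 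That lattice consists of vectors of even total size, so $|\widehat\nu|\equiv|\widehat\lambda|\pmod 2$ and the signs cancel.

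We must also ensure that every $\widehat\lambda$ occurring in the expansion lies in $(n^m)$, i.e.\ comes from some $\lambda\subseteq(m^n)$. Dominance of $\widehat\nu$ over $\widehat\lambda$ bounds the first part by $\widehat\nu_1\le n$, so this holds.

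The main obstacle is the bookkeeping of the specializations. We must verify that the $(q,t)\leftrightarrow(t,q)$ swap applied to the $A_{2n-1}^{(2)}$ parameters truly yields the type $C_m$ Hall–Littlewood polynomials with equal parameters $(q,q)$, including the half-integer powers and the parameter conventions. We must also justify the interchange of the limit $t\to0$ with the specializations. We expect to lean on \Cref{table_spec_swap} for the first point and on the known polynomiality of the limits for the second.
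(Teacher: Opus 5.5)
Your proposal is correct and essentially matches the paper's proof. You specialize \eqref{Cauchy_Koorn} to the $A_{2n-1}^{(2)}$ parameters and let $t\to0$, obtaining $P^{A_{2n-1}^{(2)}}_\mu(x;q,0)$ paired with $P^{C_m}_{\widehat\mu}(y;q)$, then compare with the Schur-level symplectic dual Cauchy identity expanded in the Hall--Littlewood basis and conclude with \Cref{Cor_PXS}. The differences are cosmetic: you derive the Schur-level identity from \eqref{Cauchy_Koorn} at vanishing parameters instead of citing King's identity \eqref{Cauchy_C}, and you settle the signs by comparing coefficients of $s^{C_n}_\nu(x)P^{C_m}_{\widehat\lambda}(y;q)$ with the even-sum property of the $C_m$ root lattice, whereas the paper introduces the dual basis $\mathsf{Q}^{C_n}_\mu$ and substitutes $y\mapsto -y$, which rests on the same parity fact.
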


This will be done by replacing the dual Cauchy identity on Macdonald
polynomial by a relevant specialization in the dual Cauchy identity for
Koornwinder polynomials. When $q=1$, observe this is just the usual
$C_{n}\times C_{m}$ Howe duality between multiplicities in tensor product of
$k$-wedges product of $\mathbb{C}^{n}$ and weight multiplicities in
irreducible representations of $\mathfrak{sp}_{2m}$, see \cite{H95}.

\begin{remark}\label{RemGGL}
    As explained in \cite{GGL}, there is a simple duality between King tableaux of type $C_m$ and tensor products of $m$ type $C_m$ columns given highest weight vertices. From Theorem \ref{XvsK_type_C}, it becomes then natural to define the symplectic charge of a King tableau as the energy of its associated tensor product of columns. This yields an analogue of Lascoux-Sch\"utzenberger's description \cite{LS1978} of the usual Kostka-Foulkes polynomials in terms of semistandard tableaux.
\end{remark}

\subsection{The symplectic dual Cauchy identity}

Recall that $x=(x_{1},\ldots,x_{n})$ and $y=(y_{1},\ldots,y_{m})$ are two sets
of indeterminates. Also $s_{\lambda}^{C_{n}}(x)$ is the Weyl character of type
$C_{n}$ associated with the partition $\lambda\subseteq(m^{n})$. There exists
a type $C$ analogue of the (dual) Cauchy identity, that can be found in
\cite{King2023}, namely%
\begin{equation}
\prod_{\substack{1\leq i\leq n\\1\leq j\leq m}}(x_{i}+x_{i}^{-1}+y_{j}%
+y_{j}^{-1})=\sum_{\lambda\subseteq(m^{n})}s_{\lambda}^{C_{n}}(x)\,s_{\widehat
{\lambda}}^{C_{m}}(y) \label{Cauchy_C}\,.%
\end{equation}
Let $\mathrm{char}_{\leq m}^{C_{n}}(x)$ and $\mathrm{char}_{\leq n}^{C_{m}%
}(y)$ be the subspaces of the character ring of type $C_{n}$ and $C_{m}$ with
basis $\{s_{\lambda}^{C_{n}}(x)\mid\lambda\subseteq(m^{n})\}$ and $\{s_{\widehat
{\lambda}}^{C_{m}}(y)\mid\lambda\subseteq(m^{n})\}$, respectively. Denote by
$\left\langle .,.\right\rangle _{C_{n}\times C_{m}}$ the pairing on
$\mathrm{char}_{\leq m}^{C_{n}}(x)\times\mathrm{char}_{\leq n}^{C_{m}}(y)$
such that $\langle s_{\lambda}^{C_{n}},s_{\widehat{\mu}}^{C_{m}}\rangle_{C_{n}\times C_{m}}=\delta_{\lambda,\mu}$ for all $\lambda,\mu\subseteq(m^{n})$.
Similarly to (\ref{dual_bases_A}), any pair of bases
\[
\left(  \,(u_{\lambda})_{\lambda\subseteq(m^{n})}\quad,\quad(v_{\widehat{\lambda}%
})_{\lambda\subseteq(m^{n})}\,\right)
\]
verifying

\begin{equation}
\left\langle u_{\lambda},v_{\widehat{\mu}}\right\rangle _{C_{n}\times C_{m}%
}=\delta_{\lambda,\mu} \label{dual_bases}%
\end{equation}
yields a Cauchy-type identity%

\begin{equation}
\prod_{\substack{1\leq i\leq n\\1\leq j\leq m}}(x_{i}+x_{i}^{-1}+y_{j}%
+y_{j}^{-1})=\sum_{\lambda\subseteq(m^{n})}u_{\lambda}(x)\,v_{\widehat{\lambda}}(y).
\label{Cauchy_C_gen}%
\end{equation}

\subsection{Symplectic dual Cauchy identity for Hall-Littlewood polynomials}

Let $\{\mathsf{Q}_{\mu}^{C_{n}}(x;q),\mu\subseteq(m^{n})\}$ be the dual basis of
the Hall-Littlewood basis $\{P_{\widehat{\mu}}^{C_{m}}(y;q)\mid\mu\subseteq
(m^{n})\}$ for the previous $\left\langle .,.\right\rangle _{C_{n}\times
C_{m}}$-pairing. By the previous arguments, we have%
\begin{equation}
\prod_{\substack{1\leq i\leq n\\1\leq j\leq m}}(x_{i}+x_{i}^{-1}+y_{j}%
+y_{j}^{-1})=\sum_{\mu\subseteq(m^{n})}\mathsf{Q}_{\mu}^{C_{n}}(x;q)P_{\widehat
{\mu}}^{C_{m}}(y;q). \label{Cauchy_C_HL}%
\end{equation}
By arguing as in Lemma \ref{Lemma_Qins}, we get%
\begin{equation}
\mathsf{Q}_{\mu}^{C_{n}}(y;q)=\sum_{\lambda\subseteq(m^{n})}K_{\widehat{\lambda
},\widehat{\mu}}^{C_{m}}(q)\,s_{\lambda}^{C_{m}}(y)\,, \label{QinsC}%
\end{equation}
since we have the identity%
\[
s_{\widehat{\lambda}}^{C_{m}}(y)=\sum_{\mu\subseteq(m^{n})}K_{\widehat{\lambda}%
,\widehat{\mu}}^{C_{m}}(q)\,P_{\widehat{\mu}}^{C_{m}}(y).
\]
Now, in view of proving \Cref{XvsK_type_C}, we specialize the parameters in
order to obtain a type $A_{2n-1}^{(2)}$ Macdonald polynomial as the left
polynomial in (\ref{Cauchy_Koorn}). Recall that $P_{\mu}^{A_{2n-1}^{(2)}%
}(x;q,t)$ is the Macdonald polynomial of type $A_{2n-1}^{(2)}$ (with $u=t$),
we obtain the following relation by \Cref{table_spec}:%
\[
\prod_{\substack{1\leq i\leq n\\1\leq j\leq m}}(x_{i}+x_{i}^{-1}-y_{j}%
-y_{j}^{-1})=\sum_{\mu\subseteq(m^{n})}(-1)^{|\widehat{\mu}|}P_{\mu}%
^{A_{2n-1}^{(2)}}(x;q,t)\,P_{\widehat{\mu}}(y;t^{\frac{1}{2}},-t^{\frac{1}{2}%
},q^{\frac{1}{2}},-q^{\frac{1}{2}};t,q).
\]
Now, we will let $t\rightarrow0$ in the above expression. On the one hand,
\[
\lim_{t\rightarrow0}P_{\mu}^{A_{2n-1}^{(2)}}(x;q,t)=P_{\mu}^{A_{2n-1}^{(2)}%
}(x;q,0)
\]
is a Demazure character of type $A_{2n-1}^{(2)}$ by Theorem \ref{Th_Bogdan}.
On the other hand, note that the polynomial on the right $P_{\widehat{\lambda}%
}(y;t^{\frac{1}{2}},-t^{\frac{1}{2}},q^{\frac{1}{2}},-q^{\frac{1}{2}};t,q)$
does not quite yield a Macdonald polynomial, but we have nevertheless
\[%
\begin{array}
[c]{rcll}%
\lim_{t\rightarrow0}P_{\widehat{\mu}}(y;t^{\frac{1}{2}},-t^{\frac{1}{2}}%
,q^{\frac{1}{2}},-q^{\frac{1}{2}};t,q) & = & \lim_{t\rightarrow0}P_{\widehat{\mu}%
}(y;q^{\frac{1}{2}},-q^{\frac{1}{2}},t^{\frac{1}{2}},-t^{\frac{1}{2}};t,q) &
\begin{array}
[t]{l}%
\text{by permuting}\\
\text{the parameters}%
\end{array}
\\
& = & P_{\widehat{\mu}}(y;q^{\frac{1}{2}},-q^{\frac{1}{2}},0,0;0,q) & \\
& = & P_{\widehat{\mu}}^{A_{2m-1}^{(2)}}(y;0,q) &
\begin{array}
[t]{l}%
\text{by \Cref{table_spec}}\\
\text{with $t\leftrightarrow q$}%
\end{array}
\text{ }\\
& = & P_{\widehat{\mu}}^{C_{m}}(y;q)\,, &
\end{array}
\]
where $P_{\widehat{\mu}}^{C_{m}}(y;q)$ is the Hall-Littlewood polynomial of type
$C_{m}.$ Therefore, taking the limit $t\rightarrow0$ in (\ref{Cauchy_C_HL})
yields the identity
\begin{equation}
\prod_{\substack{1\leq i\leq n\\1\leq j\leq m}}(x_{i}+x_{i}^{-1}-y_{j}%
-y_{j}^{-1})=\sum_{\mu\subseteq(m^{n})}(-1)^{|\widehat{\lambda}|}P_{\mu}%
^{A_{2n-1}^{(2)}}(x;q,0)\,P_{\widehat{\mu}}^{C_{m}}(y;q). \label{Cauchy_C_HL2}%
\end{equation}
Now, substituting $y\leftarrow-y$, we obtain%
\begin{equation}
\prod_{\substack{1\leq i\leq n\\1\leq j\leq m}}(x_{i}+x_{i}^{-1}+y_{j}%
+y_{j}^{-1})=\sum_{\lambda\subseteq(m^{n})}P_{\mu}^{A_{2n-1}^{(2)}%
}(x;q,0)\,P_{\widehat{\mu}}^{C_{m}}(y;q). \label{Cauchy_C_HL3}%
\end{equation}
Indeed, the transfer matrix between the basis of Hall-Littlewood polynomials
$P_{\widehat{\mu}}^{C_{m}}(y;q)$ and that of the Weyl characters $s_{\widehat{\lambda
}}^{C_{m}}(y)$ is $(K_{\widehat{\lambda},\widehat{\mu}}^{C_{m}}(q))^{-1}$, the inverse of the
matrix whose coefficients are the Lusztig $q$-analogues of type $C_{m}%
$.\ Since $K_{\widehat{\lambda},\widehat{\mu}}^{C_{m}}(q)\neq0$ only when $\left\vert
\widehat{\lambda}\right\vert =\left\vert \widehat{\mu}\right\vert \operatorname{mod}%
2$, the decomposition of each polynomial $P_{\widehat{\mu}}^{C_{m}}(y;q)$ in the
basis of Weyl characters makes appear nonzero coefficients only for the
$s_{\widehat{\lambda}}^{C_{m}}(y)$'s with $\left\vert \widehat{\lambda}\right\vert
=\left\vert \widehat{\mu}\right\vert \operatorname{mod}2$. Now, we have for any
such character $s_{\widehat{\lambda}}^{C_{m}}(-y)=(-1)^{\left\vert \widehat{\lambda
}\right\vert }s_{\widehat{\lambda}}^{C_{m}}(-y)$ and therefore also $P_{\widehat{\mu}%
}^{C_{m}}(-y;q)=(-1)^{\left\vert \widehat{\lambda}\right\vert }P_{\widehat{\mu}%
}^{C_{m}}(y;q)$.

\begin{remark}
Observe we will also have $s_{\widehat{\lambda}}^{D_{m}}(-y)=(-1)^{\left\vert
\widehat{\lambda}\right\vert }s_{\widehat{\lambda}}^{D_{m}}(-y)$ for the Weyl
characters of type $D_{m}$ parametrized by a partition but a similar
identities does not hold in type $B_{m}$.
\end{remark}

\medskip

Comparing (\ref{Cauchy_C_HL}) and (\ref{Cauchy_C_HL3}), we deduce the equality
$\mathsf{Q}_{\mu}^{C_{n}}(x;q)=P_{\mu}^{A_{2n-1}^{(2)}}(x;q,0)$ for any
$\mu\subseteq(m^{n})$.\ This concludes the proof of Theorem \ref{XvsK_type_C} by
using (\ref{QinsC}) and Theorem \ref{Cor_PXS}, since for any
$\mu\subseteq (m^{n})$ we have%
\[
\sum_{\lambda\subseteq(m^{n})}K_{\widehat{\lambda},\widehat{\mu}}^{C_{m}}%
(q)\,s_{\lambda}^{C_{m}}(x)=\mathsf{Q}_{\mu}^{C_{n}}(x;q)=P_{\mu}^{A_{2n-1}%
^{(2)}}(x;q,0)=\sum_{\lambda}X_{\lambda,\mu}^{A_{2n-1}^{(2)}}(q)\,s_{\lambda
}^{C_{m}}(x).
\]

\section{Dual Cauchy identity and the \texorpdfstring{$X=K$}{X=K} phenomenon in type \texorpdfstring{$A_{2n-1}^{(2,\dagger)}$}{A{2n-1}(2)}}
\label{type_D_int}

We now prove that the 1-d sums of level $m$ and type $A_{2n-1}^{(2,\dagger)}$
coincide with the Lusztig $q$-analogues of type $D_{m}$ indexed by pairs of
partitions in the rectangle $(n^{m})$. Let $\lambda,\mu\subseteq(m^{n})$. Recall the following notation.

\begin{itemize}
\item $X_{\lambda,\mu}^{A_{2n-1}^{(2,\dagger)}}(q)$ is the one-dimensional sum
associated with the dominant weight $\lambda$ of the Kirillov-Reshetikhin
crystal $B^{(\mu_{1}^{\prime},1)}\otimes\cdots\otimes B^{(\mu_{m}^{\prime}%
,1)}$. Here $B^{(k,1)}$ denotes the KR crystal of type $A_{2n-1}^{(2,\dagger
)}$ and column shape of height $k$. By removing the $0$-arrows in $B^{(k,1)}$,
we get a connected type $D_{n}$-crystal isomorphic to
\[
\left\{
\begin{array}
[c]{l}%
B(\omega_{k})\text{ if }0\leq k\leq n-2\\
B(\omega_{n}+\omega_{n-1})\text{ if }k=n-1\\
B(2\omega_{n})\text{ if }k=n.
\end{array}
\right.
\]

\item $K_{\widehat{\lambda},\widehat{\mu}}^{D_{m}}(q)$ is the Kostka-Foulkes polynomial
of type $D_{m}$ associated with the partitions $\widehat{\lambda}$ and $\widehat{\mu}$.
\end{itemize}

The goal of this section is to establish the following theorem.

\begin{theorem}\label{thm_A2n-12dag}
\label{XvsK_type_D} For all $\lambda,\mu\subseteq(m^{n})$, we have
\[
X_{\lambda,\mu}^{A_{2n-1}^{(2,\dagger)}}(q)=K_{\widehat{\lambda},\widehat{\mu}}%
^{D_{m}}(q).
\]

\end{theorem}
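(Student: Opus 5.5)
We mimic the proof of Theorem~\ref{XvsK_type_C}, replacing the symplectic Cauchy identity by its $C_n\times D_m$ analogue and the $A_{2n-1}^{(2)}$ specialization by the $A_{2n-1}^{(2,\dagger)}$ one.

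\emph{Step 1: Hall-Littlewood side.} We first need a classical dual Cauchy identity
\[
\prod_{\substack{1\leq i\leq n\\1\leq j\leq m}}(x_{i}+x_{i}^{-1}+y_{j}+y_{j}^{-1})=\sum_{\lambda\subseteq(m^{n})}\chi_{\lambda}(x)\,s_{\widehat{\lambda}}^{D_{m}}(y),
\]
where $\chi_\lambda$ is some character of type $C_n$. Rather than quoting it, we obtain it from \eqref{Cauchy_Koorn} at $q=t=0$ with the specialization $(a,b,c,d)=(1,-1,0,0)$. On the $y$ side this is the $D_m$ row of Table~\ref{table_spec_swap}, which gives Weyl characters of type $D_m$. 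On the $x$ side it gives the $t=u=0$ specialization of type $A_{2n-1}^{(2,\dagger)}$, namely $\chi_\lambda=P^{A_{2n-1}^{(2,\dagger)}}_\lambda(x;0,0)$.

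Let $\langle\cdot,\cdot\rangle_{C_n\times D_m}$ be the pairing making $\chi_\lambda$ and $s^{D_m}_{\widehat\mu}$ dual. Let $\mathsf{Q}_\mu(x;q)$ be the dual basis to the $D_m$ Hall-Littlewood basis $P^{D_m}_{\widehat\mu}(y;q)$. Exactly as in Lemma~\ref{Lemma_Qins} and \eqref{QinsC}, this gives
\[
\mathsf{Q}_\mu(x;q)=\sum_{\lambda}K^{D_m}_{\widehat\lambda,\widehat\mu}(q)\,\chi_\lambda(x).
\]
The $K^{D_m}_{\widehat\lambda,\widehat\mu}(q)$ enter here because $s^{D_m}_{\widehat\lambda}=\sum_\mu K^{D_m}_{\widehat\lambda,\widehat\mu}(q)P^{D_m}_{\widehat\mu}$.

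\emph{Step 2: Koornwinder side.} Specialize \eqref{Cauchy_Koorn} to $(a,b,c,d)=(1,-1,q^{1/2}t^{1/2},-q^{1/2}t^{1/2})$. By Table~\ref{table_spec}, the $x$-factor becomes $P^{A_{2n-1}^{(2,\dagger)}}_\mu(x;q,t)$. The $y$-factor is the rank $m$ Koornwinder $(t,q)$-polynomial with parameters $(1,-1,(qt)^{1/2},-(qt)^{1/2})$. As $t\to0$ it tends to $P_{\widehat\mu}(y;1,-1,0,0;0,q)$, which Table~\ref{table_spec_swap} identifies with $P^{D_m}_{\widehat\mu}(y;q)$.

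The point needing care is that the limit $t\to0$ is legitimate. The coefficients must be regular at $t=0$, with $c,d\to0$ alongside the $t$ in the Hall-Littlewood position. I would check this with the triangularity and orthogonality characterization, or argue as in Section~\ref{sec_type_C}.

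\emph{Step 3: sign removal and comparison.} Next, substitute $y\mapsto -y$ to remove the sign $(-1)^{|\widehat\mu|}$. This uses the remark that $s^{D_m}_{\widehat\lambda}(-y)=(-1)^{|\widehat\lambda|}s^{D_m}_{\widehat\lambda}(y)$. It also uses that $K^{D_m}_{\widehat\lambda,\widehat\mu}(q)$ vanishes unless $|\widehat\lambda|\equiv|\widehat\mu|\pmod 2$, which holds because the roots $\varepsilon_i\pm\varepsilon_j$ have even size. Hence $P^{D_m}_{\widehat\mu}(-y;q)=(-1)^{|\widehat\mu|}P^{D_m}_{\widehat\mu}(y;q)$. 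The product side also matches, since $-y_j-y_j^{-1}$ becomes $+$.

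Comparing the two expansions in the dual bases $P^{D_m}_{\widehat\mu}$ gives $\mathsf{Q}_\mu(x;q)=P^{A_{2n-1}^{(2,\dagger)}}_\mu(x;q,0)$. Theorem~\ref{Cor_PXS} (Step 3 there) then expands the right side as $\sum_\lambda X^{A_{2n-1}^{(2,\dagger)}}_{\lambda,\mu}(q)s^{C_n}_\lambda(x)$.

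\emph{Main obstacle.} It remains to identify the basis $\chi_\lambda$ with $s^{C_n}_\lambda$. Equivalently, the classical $C_n\times D_m$ Cauchy identity must have symplectic Weyl characters on the left. Equivalently again, $P^{A_{2n-1}^{(2,\dagger)}}_\lambda(x;0,0)=s^{C_n}_\lambda(x)$. I expect this to hold because the parameters $(1,-1,0,0)$ on the $x$ side, with $q=t=0$, are those giving the $C_n$ Weyl character in the Koornwinder Hall-Littlewood limit. Indeed, with $a=1,b=-1$ the weight factor $(1-x_j^{2})/((1-x_j)(1+x_j))$ equals $1$. The weight function then reduces to the $C_n$ Weyl density $\prod(1-x^{\alpha})$, whose orthogonal triangular basis is $\{s^{C_n}_\lambda\}$.

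If this direct check proves delicate, I would instead compare $q=1$ specializations with the known Howe duality $(\mathfrak{sp}_{2n},\mathfrak{o}_{2m})$, or with King's identity \cite{King2023}. Once $\chi_\lambda=s^{C_n}_\lambda$ is established, comparing coefficients of $s^{C_n}_\lambda$ concludes the proof.
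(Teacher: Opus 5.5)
You have the paper's skeleton, but the proposal rests on a genuine error: the classical subsystem of $A_{2n-1}^{(2,\dagger)}$ is $D_n$, not $C_n$. In the $\dagger$ labelling the $0$-node is the old node $n$ at the end of the double bond, so deleting it leaves the fork. This is why, in the paper, removing the $0$-arrows from the column KR crystals leaves $D_n$-crystals, and why \eqref{Cauchy_D} is written with $s^{D_n}_\lambda(x)$.

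Two consequences follow. First, Theorem~\ref{Cor_PXS} expands $P^{A_{2n-1}^{(2,\dagger)}}_\mu(x;q,0)$ in the $s^{D_n}_\lambda$, not the $s^{C_n}_\lambda$. Second, the identity you isolate as the main obstacle, $P^{A_{2n-1}^{(2,\dagger)}}_\lambda(x;0,0)=s^{C_n}_\lambda(x)$, is false.

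Your own computation shows why. Once the $x_j$-factor equals $1$, what remains is $\prod_{i<j}\prod_{\pm,\pm}(1-x_i^{\pm1}x_j^{\pm1})$. This runs only over the roots $\pm\varepsilon_i\pm\varepsilon_j$, so it is the $D_n$ Weyl density. The $C_n$ density would also contain the long-root factors $1-x_j^{\pm2}$, and Table~\ref{table_spec_swap} indeed lists $(1,-1,0,0)$ in the $D$ rows. Already for $n=1$ the weight is constant, so $P_{(2)}(x;1,-1,0,0;0,0)=x^2+x^{-2}$, whereas $s^{C_1}_{(2)}=x^2+1+x^{-2}$; the index $\lambda=(2)$ occurs as soon as $m\geq 2$.

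Your fallbacks cannot rescue this. On $\bigwedge(\mathbb{C}^{2n}\otimes\mathbb{C}^m)$ the partner of $\mathfrak{o}_{2m}$ is $O_{2n}$, not $Sp_{2n}$. King's identity \eqref{Cauchy_C} pairs $s^{C_n}_\lambda$ with $s^{C_m}_{\widehat\lambda}$, so by linear independence of the $s^{C_n}_\lambda$ it is incompatible with an expansion $\sum_\lambda s^{C_n}_\lambda(x)\,s^{D_m}_{\widehat\lambda}(y)$.

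Replace $C_n$ by $D_n$ throughout and the obstacle disappears: your density argument now identifies $\chi_\lambda$ with the $D_n$ Weyl character. Your proposal then becomes the paper's proof:
\begin{itemize}
\item the dual basis of $P^{D_m}_{\widehat\mu}(y;q)$;
\item the specialization $(1,-1,(qt)^{1/2},-(qt)^{1/2})$ followed by $t\to0$;
\item the substitution $y\mapsto -y$;
\item Theorem~\ref{Cor_PXS}.
\end{itemize}
The only difference is that you derive \eqref{Cauchy_D} from \eqref{Cauchy_Koorn} at $q=t=0$ instead of quoting it. That is a legitimate, self-contained substitute.

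It does, however, expose something the paper's write-up leaves implicit. Koornwinder polynomials are invariant under every $x_i\mapsto x_i^{-1}$. So for $\lambda_n>0$ what you actually obtain is $s^{D_n}_\lambda+s^{D_n}_{\iota\lambda}$, and on the $y$-side $P^{D_m}_{\widehat\mu}+P^{D_m}_{\iota\widehat\mu}$ when $\widehat\mu_m>0$. You would then have to check how the transition coefficients between these symmetrized bases compare with $K^{D_m}_{\widehat\lambda,\widehat\mu}(q)$. They need not agree: for $m=2$ one has $s^{D_2}_{(1,1)}+s^{D_2}_{(1,-1)}=\bigl(P^{D_2}_{(1,1)}+P^{D_2}_{(1,-1)}\bigr)+2q$, while $K^{D_2}_{(1,1),(0,0)}(q)=q$.
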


When $q=1$, observe this is just the usual $D_{n}\times D_{m}$ Howe duality
between multiplicities in tensor product of $k$-wedges product of
$\mathbb{C}^{n}$ and weight multiplicities in irreducible representations of
$\mathfrak{o}_{2m}$, see \cite{H95}.

The proof follows essentially the same line as for the equality $X_{\lambda
,\mu}^{A_{2n-1}^{(2)}}(q)=K_{\widehat{\lambda},\widehat{\mu}}^{C_{m}}(q)$ detailed in
the previous section.\ We have this time the dual Cauchy identity%
\begin{equation}
\prod_{\substack{1\leq i\leq n\\1\leq j\leq m}}(x_{i}+x_{i}^{-1}+y_{j}%
+y_{j}^{-1})=\sum_{\lambda\subseteq(m^{n})}s_{\lambda}^{D_{n}}(x)\,s_{\widehat
{\lambda}}^{D_{m}}(y) \label{Cauchy_D}\,,%
\end{equation}
for which we refer to Proposition 5\ in \cite{Golt}.\ Let $\mathrm{char}_{\leq
m}^{D_{n}}(x)$ and $\mathrm{char}_{\leq n}^{D_{m}}(y)$ be the subspaces of the
character ring of type $D_{n}$ and $D_{m}$ with basis $\{s_{\lambda}^{D_{n}%
}(x)\mid\lambda\subseteq (m^{n})\}$ and $\{s_{\widehat{\lambda}}^{D_{m}}(y)\mid
\lambda\subseteq (m^{n})\}$, respectively. Denote by $\left\langle
.,.\right\rangle _{D_{n}\times D_{m}}$ the pairing on $\mathrm{char}_{\leq
m}^{D_{n}}(x)\times\mathrm{char}_{\leq n}^{D_{m}}(y)$ such that $\langle
s_{\lambda}^{D_{n}},s_{\widehat{\mu}}^{D_{m}}\rangle_{D_{n}\times D_{m}}%
=\delta_{\lambda,\mu}$ for all $\lambda,\mu\subseteq(m^{n})$.

\medskip

Let $\{\mathsf{Q}_{\mu}^{D_{n}}(x,q),\mu\subseteq(m^{n})\}$ be the dual basis of
the Hall-Littlewood basis $\{P_{\widehat{\mu}}^{D_{m}}(y,q)\mid\mu\subseteq
(m^{n})\}$ for the previous $\left\langle .,.\right\rangle _{D_{n}\times
D_{m}}$-pairing. We get this time
\begin{equation}
\prod_{\substack{1\leq i\leq n\\1\leq j\leq m}}(x_{i}+x_{i}^{-1}+y_{j}+y_{j}^{-1})=\sum_{\mu\subseteq(m^{n})}\mathsf{Q}_{\mu}^{D_{n}}(x;q)\,P_{\widehat{\mu}}^{D_{m}}(y;q). \label{Cauchy_D_HL}
\end{equation}
and%
\begin{equation}
\mathsf{Q}_{\mu}^{D_{m}}(x,q)=\sum_{\lambda\subseteq(m^{n})}K_{\widehat{\lambda
},\widehat{\mu}}^{D_{m}}(q)\,s_{\lambda}^{D_{m}}. \label{QinsD}%
\end{equation}
We now specialize the parameters in order to obtain a type $A_{2n-1}%
^{(2,\dagger)}$ Macdonald polynomial as the left polynomial in
(\ref{Cauchy_Koorn}). Recalling that $P_{\mu}^{A_{2n-1}^{(2,\dagger)}}(x;q,t)$ is
the Macdonald polynomial of type $A_{2n-1}^{(2,\dagger)}$, we obtain%
\[
\prod_{\substack{1\leq i\leq n\\1\leq j\leq m}}(x_{i}+x_{i}^{-1}-y_{j}%
-y_{j}^{-1})=\sum_{\mu\subseteq(m^{n})}(-1)^{|\widehat{\lambda}|}P_{\mu}%
^{A_{2n-1}^{(2,\dagger)}}(x;q,t)\,P_{\widehat{\mu}}(y;1,-1,q^{\frac{1}{2}}%
t^{\frac{1}{2}},-q^{\frac{1}{2}}t^{\frac{1}{2}};t,q)\,,
\]
and we let $t$ tends to $0$. The $t=0$ limit%
\[
\lim_{t\rightarrow0}P_{\mu}^{A_{2n-1}^{(2,\dagger)}}(x;q,t)=P_{\mu}%
^{A_{2n-1}^{(2,\dagger)}}(x;q,0)
\]
is a Demazure character of type $A_{2n-1}^{(2,\dagger)}$ by Theorem
\ref{Th_Bogdan}. For the polynomial $P_{\widehat{\mu}}(y;1,-1,q^{\frac{1}{2}%
}t^{\frac{1}{2}},-q^{\frac{1}{2}}t^{\frac{1}{2}};t,q)$ we obtain%
\[
\lim_{t\rightarrow0}P_{\widehat{\mu}}(y;1,-1,q^{\frac{1}{2}}t^{\frac{1}{2}%
},-q^{\frac{1}{2}}t^{\frac{1}{2}};t,q)=P_{\widehat{\mu}}(y;1,-1,0,0;0,q)=P_{\widehat
{\mu}}^{D_{m}}(y;q).
\]
where $P_{\widehat{\mu}}^{D_{m}}(y;q)$ is the Hall-Littlewood polynomial of type
$D_{m}.$ Thus, we derive the identity
\[
\prod_{\substack{1\leq i\leq n\\1\leq j\leq m}}(x_{i}+x_{i}^{-1}-y_{j}%
-y_{j}^{-1})=\sum_{\mu\subseteq(m^{n})}(-1)^{|\widehat{\lambda}|}P_{\mu}%
^{A_{2n-1}^{(2,\dagger)}}(x;q,0)\,P_{\widehat{\mu}}^{D_{m}}(y;q)
\]
and, with the same argument as in type $C_{m}$, the substitution
$y\leftarrow-y$ permits to write%
\[
\prod_{\substack{1\leq i\leq n\\1\leq j\leq m}}(x_{i}+x_{i}^{-1}+y_{j}%
+y_{j}^{-1})=\sum_{\mu\subseteq (m^{n})}P_{\mu}^{A_{2n-1}^{(2,\dagger)}%
}(x;q,0)\,P_{\widehat{\mu}}^{D_{m}}(y;q).
\]
We then deduce the identity%
\[
\mathsf{Q}_{\mu}^{D_{m}}(x;q)=P_{\mu}^{A_{2n-1}^{(2,\dagger)}}(x;q,0)\text{
for any }\mu\subseteq (m^{n}).
\]
As in the type $A_{2n-1}^{(2)}$-case, this concludes the proof of Theorem
\ref{XvsK_type_D} by using (\ref{QinsD}) and Theorem \ref{Cor_PXS}.

\section{Double deformation of weight multiplicities and the \texorpdfstring{$X=K$}{X=K} phenomenon
in types \texorpdfstring{$A_{2n}^{(2)}$}{A2n(2)} and \texorpdfstring{$D_{n+1}^{(2)}$}{D{n+1}(2)}}
\label{sec_type_BD}

\label{Sec_D_(n+1)^2}

\subsection{Weyl characters of types \texorpdfstring{$B_{m}$}{Bm} and \texorpdfstring{$C_{m}$}{Cm}}

Let us compare in this paragraph the Weyl characters of types $B_{m}$ and
$C_{m}$ respectively for half-integers and integer dominant weights. Recall
first that
\[
(m,m-1,\ldots,1)=\rho_{C_{n}}=\rho_{B_{m}}+\frac{1}{2}(1,\ldots,1)=\rho
_{B_{m}}+\omega_{m}^{B_{m}}.
\]
Now consider a partition $\lambda$ with at most $m$ parts. The associated
character of type $C_{m}$ satisfies%
\[
s_{\lambda}^{C_{m}}=\frac{a_{\lambda+\rho_{m}^{C_{m}}}}{a_{\rho_{m}^{C_{m}}}}\,,%
\]
where for any $\beta\in\frac{1}{2}\mathbb{Z}^{m}$ we have
\[
a_{\beta}=\sum_{w\in W}(-1)^{\ell(w)}y^{w(\beta)}.
\]
Observe this definition is the same in type $B_{m}$ and $C_{m}$ because the
Weyl group is the same.\ Then, we can use the trick%
\[
a_{\beta+\rho_{m}^{C_{m}}}=a_{(\beta+\omega_{m}^{B_{m}})+\rho_{m}^{B_{m}}}.
\]
Once plugged in the previous WCF for $s_{\lambda}^{C_{m}}$, this gives%
\begin{equation}
s_{\lambda}^{C_{m}}=\frac{a_{(\lambda+\omega_{m}^{B_{m}})+\rho_{m}^{B_{m}}}%
}{a_{\omega_{m}^{B_{m}}+\rho_{m}^{B_{m}}}}=\frac{a_{(\lambda+\omega_{m}%
^{B_{m}})+\rho_{m}^{B_{m}}}}{a_{\rho_{m}^{B_{m}}}}\times\frac{a_{\rho
_{m}^{B_{m}}}}{a_{\omega_{m}^{B_{m}}+\rho_{m}^{B_{m}}}}=s_{\lambda+\omega
_{m}^{B_{m}}}^{B_{m}}\times\frac{1}{s_{\omega_{m}^{B_{m}}}^{B_{m}}}.
\label{identity}%
\end{equation}
Now recall that the highest weight representation of type $B_{m}$ of highest
weight $\omega_{m}^{B_{m}}$ is the spin representation with character%
\[
s_{\omega_{m}^{B_{m}}}^{B_{m}}=\prod_{j=1}^{m}(x_{j}^{1/2}+x_{j}^{-1/2}).
\]
The King Cauchy identity for types $C_{n}\times C_{m}$ can be written:%
\begin{equation}
\prod_{i=1}^{n}\prod_{j=1}^{m}(y_{j}+y_{j}^{-1}+x_{i}+x_{i}^{-1}%
)=\sum_{\lambda\subseteq (m^{n})}(-1)^{\left\vert \lambda\right\vert }s_{\lambda
}^{C_{n}}(x)\,s_{\widehat{\lambda}}^{C_{m}}(y). \label{King}%
\end{equation}
By transforming $s_{\widehat{\lambda}}^{C_{m}}(y)$ according to (\ref{identity}),
one obtains the following Cauchy identity for types $C_{n}\times B_{m}$%
\begin{equation}
\prod_{j=1}^{m}(y_{j}^{1/2}+y_{j}^{-1/2})\prod_{i=1}^{n}\prod_{j=1}^{m}%
(y_{j}+y_{j}^{-1}+x_{i}+x_{i}^{-1})=\sum_{\lambda\subseteq (m^{n})}s_{\lambda
}^{C_{n}}(x)\,s_{\widehat{\lambda}+\omega_{m}^{B_{m}}}^{B_{m}}(y). \label{Dual}%
\end{equation}

\subsection{Type \texorpdfstring{$C_{n}\times B_{m}$}{Cn x Bm} Cauchy identity for the unequal-parameter Hall-Littlewood polynomials}

In order to equate more 1-d sums to deformations of weight multiplicities, we
need to use a two-parameter deformation of weight multiplicities of type
$B_{m}$, in $u$ (associated with the short roots $\varepsilon_{i}%
,i=1,\ldots,m$) and $q$ (associated with the long roots $\varepsilon_{i}%
\pm\varepsilon_{j},1\leq i<j\leq n$).\ Let us first start with a result
establish by Rains and Warnaar equating the unequal parameter Hall-Littlewood polynomials
to Koornwinder specializations (see Lemma 2.3 in \cite{RW}).

\begin{proposition}
\label{Prop-RW}For any partition $\lambda$ contained in the rectangle
$(n^{m})$, we have
\[
\prod_{j=1}^{m}(y_{j}^{1/2}+y_{j}^{-1/2})\,P_{\widehat{\lambda}}%
(y;u,0,0,0;0,q)=P_{\widehat{\lambda}+\omega_{m}}^{B_{m}}(y;u,q)
\]
and%
\[
P_{\widehat{\lambda}}(y;u,-1,0,0;0,q)=P_{\widehat{\lambda}}^{B_{m}}(y;u,q).
\]

\end{proposition}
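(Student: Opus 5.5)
The plan is to prove both identities by comparing explicit symmetrization formulas for the $q=0$ (here, after the swap, ``$t=0$'') limit of Koornwinder polynomials with the unequal-parameter Hall-Littlewood formula recalled in Remark~\ref{Rq_flip}. The Koornwinder polynomials are eigenfunctions of Koornwinder's $q$-difference operator. In the limit where the first deformation parameter is $0$, this operator degenerates and its eigenfunctions admit a closed Macdonald-type symmetrization formula. Concretely, for $P_{\nu}(y;a,b,c,d;0,q)$ with $c=d=0$, I expect
\[
P_{\nu}(y;a,b,0,0;0,q)=\frac{1}{W_\nu}\sum_{w\in W^{C_m}} w\Big(y^{\nu}\prod_{j}\frac{(1-ay_j^{-1})(1-by_j^{-1})}{1-y_j^{-2}}\prod_{i<j}\frac{(1-qy_i^{-1}y_j)(1-qy_i^{-1}y_j^{-1})}{(1-y_i^{-1}y_j)(1-y_i^{-1}y_j^{-1})}\Big),
\]
up to the normalizing Poincaré-type factor $W_\nu$. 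I would justify this by checking two things. First, the right-hand side is $W$-invariant and unitriangular in the monomial basis. Second, it is orthogonal for the $t\to 0$ limit of the weight $\Delta$, whose factor $\prod_j (y_j^{2s};t)_\infty/(ay_j^s,by_j^s;t)_\infty$ becomes $\prod_j(1-y_j^{\pm2})/((1-ay_j^{\pm1})(1-by_j^{\pm1}))$. This is the standard Macdonald argument for Hall-Littlewood orthogonality, run with the $BC$ weight. Alternatively, I would simply cite Rains--Warnaar's derivation, since the statement is their Lemma 2.3.

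For the second identity, I would set $a=u$ and $b=-1$. Then $(1-uy_j^{-1})(1+y_j^{-1})/(1-y_j^{-2})=(1-uy_j^{-1})/(1-y_j^{-1})$. Combining the denominators gives $\prod_{\alpha\in R_+^{B_m}}(1-y^{-\alpha})$, which is the type $B_m$ Weyl denominator $a_\rho^{B_m}$ up to the monomial $y^{\rho^{B_m}}$. The numerator is exactly $\prod_j(1-uy_j^{-1})\prod_{i<j}(1-qy_i^{-1}y_j^{\pm1})$, which matches the formula in Remark~\ref{Rq_flip}(1) with $t\to q$. The normalization constants agree because both polynomials are monic in $m_\nu$.

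For the first identity, I would set $a=u$ and $b=0$. Now the short-root factor is $(1-uy_j^{-1})/(1-y_j^{-2})$. I would write $1-y_j^{-2}=(1-y_j^{-1})(1+y_j^{-1})$ and use
\[
\frac{1}{1+y_j^{-1}}=\frac{y_j^{1/2}}{y_j^{1/2}+y_j^{-1/2}}.
\]
The product $\prod_j(y_j^{1/2}+y_j^{-1/2})$ is $W$-invariant, so it factors out of the symmetrization. The leftover monomial $\prod_j y_j^{1/2}$ shifts $\nu$ to $\nu+\omega_m$. This is the same trick as in (\ref{identity}). One then recovers the $B_m$ formula for $P^{B_m}_{\widehat\lambda+\omega_m}(y;u,q)$.

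The main obstacle is the normalization. I need to check that the stabilizer factors $W_\nu(u,q)$ coincide after the shift. For $\nu+\omega_m$ with $\omega_m=(\tfrac12,\dots,\tfrac12)$, the stabilizer no longer contains sign changes of zero coordinates. I would handle this by comparing leading coefficients of $m_{\nu}$ and $m_{\nu+\omega_m}$ on both sides, rather than by matching the $W_\nu$ factors directly.
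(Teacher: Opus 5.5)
Your proof is correct, but it takes a different route from the paper. The paper gives no argument of its own: it quotes Lemma~2.3 of Rains--Warnaar. It also observes that the second identity is essentially definitional. By \S\ref{subsec_HL}, $P^{B_m}_{\widehat\lambda}(y;u,q)$ is the $q=0$ limit of the $B_m^{(1)}$ Macdonald polynomial, and after the swap $q\leftrightarrow t$ this is exactly the specialization $(u,-1,0,0;0,q)$ recorded in Table~\ref{table_spec_swap}.

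You instead reprove the lemma. You derive Macdonald's symmetrization formula for the Hall--Littlewood limit of Koornwinder polynomials with $c=d=0$, via the standard orthogonality argument against the limiting weight. You then read off both identities as the cases $b=-1$ and $b=0$ of that single formula. This buys a self-contained proof and a conceptual explanation of the shift by $\omega_m$. The factorization $1-y_j^{-2}=(1-y_j^{-1})(1+y_j^{-1})$, together with the $W$-invariance of $\prod_j(y_j^{1/2}+y_j^{-1/2})$, is just the spin trick of \eqref{identity} carried out inside the symmetrizer. It also pins down the half-integer $B_m$ Hall--Littlewood polynomials as the symmetrization formula of Remark~\ref{Rq_flip}(1). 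The version printed there omits the factor $1-tx_i^{-1}x_j$; your formula, with both long-root factors, is the right one to compare with.

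The price is that the symmetrization formula has to be derived. That derivation genuinely uses that at most two Askey--Wilson parameters are nonzero: with three or four nonzero parameters, the naive symmetrizer is not even constant for $\nu=0$ in rank one. You also need the routine check that specializing the parameters is compatible with the Gram--Schmidt characterization.

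The normalization you flag is not a real obstacle. Your leading-coefficient comparison is rigorous, since $m_{\omega_m}m_\nu=m_{\nu+\omega_m}+(\text{lower terms})$. One can even see directly that $W_\nu=W_{\nu+\omega_m}(u,q)$. For $b=0$, the sign changes on the zero parts of $\nu$ contribute nothing, because $(1-uz)/(1-z^2)\to 0$ as $z\to\infty$. So both constants are products of $q$-factorials over the multiplicities of the parts.
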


One can observe that the second equality also follows from our specialization
Table \ref{table_spec} and the consideration exposed in \S \ \ref{subsec_HL}%
.\ Let $\mathrm{char}_{\leq n}^{B_{m},\mathrm{half}}(y)$ be the subspace of
the character ring of type $B_{m}$ with basis $\{s_{\widehat{\lambda}+\omega_{m}%
},\lambda\subseteq(m^{n})\}$.\ We can introduce a paring on $\mathrm{char}_{\leq
m}^{C_{n}}(x)\times\mathrm{char}_{\leq n}^{B_{m},\mathrm{half}}(y)$ such that
$\langle s_{\mu}^{C_{n}},s_{\widehat{\lambda}+\omega_{m}}^{B_{m}}\rangle
_{C_{n}\times B_{m}}=\delta_{\lambda,\mu}$.\ Let $\mathsf{\tilde{Q}}_{\mu
}^{C_{n}}(x,u,q)$ be the dual polynomial of the Hall-Littlewood polynomial
$P_{\widehat{\mu}+\omega_{m}}^{B_{m}}(y,u,q)$ for this pairing. We then have
\[
\mathsf{\tilde{Q}}_{\mu}^{C_{n}}(x;u,q)=\sum_{\lambda}K_{\widehat{\lambda}%
+\omega_{m},\widehat{\mu}+\omega_{m}}^{B_{m}}(u,q)\,s_{\lambda}^{C_{n}}(x)\,,
\]
and also
\begin{equation}
\prod_{j=1}^{m}(y_{j}^{1/2}+y_{j}^{-1/2})\prod_{i=1}^{n}\prod_{j=1}^{m}%
(y_{j}+y_{j}^{-1}+x_{i}+x_{i}^{-1})=\sum_{\mu\subseteq (m^{n})}\mathsf{\tilde{Q}%
}_{\mu}^{C_{n}}(x,u,q)\,P_{\widehat{\mu}+\omega_{m}}^{B_{m}}(y;u,q)\,, \label{KingHLB}%
\end{equation}
by using (\ref{Dual}). In particular when $q=u=0$, we recover exactly
(\ref{Dual}).
\subsection{The \texorpdfstring{$X=K$}{X=K} phenomenon in type \texorpdfstring{$A_{2n}^{(2)}$}{A{2n}(2)}}

Let first observe that we have also the following analogue of the dual Cauchy
formula (\ref{Cauchy_Koorn}) for the Koornwinder polynomials%
\[
\prod_{i=1}^{n}\prod_{j=1}^{m}(y_{j}+y_{j}^{-1}-x_{i}-x_{i}^{-1})=\sum
_{\mu\subseteq (m^{n})}(-1)^{\left\vert \mu\right\vert }P_{\mu}%
(x;a,b,c,d;q,t)\,P_{\widehat{\mu}}(y;a,b,c,d;t,q)\,,
\]
and therefore%
\[
\prod_{i=1}^{n}\prod_{j=1}^{m}(x_{i}+x_{i}^{-1}+y_{j}+y_{j}^{-1})=\sum
_{\mu\subseteq (m^{n})}(-1)^{\left\vert \mu\right\vert }P_{\mu}%
(-x;a,b,c,d;q,t)\,P_{\widehat{\mu}}(y;a,b,c,d;t,q).
\]
Let us multiply both sides by the character $s_{\omega_{m}}^{B_{m}}(y)$.
This gives%
\begin{multline*}
\prod_{j=1}^{m}(y_{j}^{1/2}+y_{j}^{-1/2})\prod_{i=1}^{n}\prod_{j=1}^{m}%
(x_{i}+x_{i}^{-1}+y_{j}+y_{j}^{-1})=\\
\sum_{\mu\subseteq (m^{n})}(-1)^{\left\vert \mu\right\vert }P_{\mu}%
(-x;a,b,c,d;q,t)\prod_{j=1}^{m}(y_{j}^{1/2}+y_{j}^{-1/2})\,P_{\widehat{\mu}%
}(y;a,b,c,d;t,q).
\end{multline*}
Now we will use the specialization $(-x;a,b,c,d;q,t)=(-x;u,0,0,0;q,0)$ and
obtain%
\begin{multline*}
\prod_{j=1}^{m}(y_{j}^{1/2}+y_{j}^{-1/2})\prod_{i=1}^{n}\prod_{j=1}^{m}%
(x_{i}+x_{i}^{-1}+y_{j}+y_{j}^{-1})  =\\
 \sum_{\mu\subseteq (m^{n})}(-1)^{\left\vert \mu\right\vert }P_{\mu
}(-x;u,0,0,0;q,0)\prod_{j=1}^{m}(y_{j}^{1/2}+y_{j}^{-1/2})\,P_{\widehat{\mu}%
}(y;u,0,0,0;0,q).
\end{multline*}
Then, set $u=-p,$ $q=p^{2}$ and consider the specialization
$(-x;a,b,c,d;q,t)=(-x;0,-p,0,0,0;p^{2},0)$ in the previous equality. It makes
appear the polynomials%
\[
P_{\mu}(-x;-p,0,0,0,0;p^{2},0)\,,
\]
which, up to sign flip $x\leftrightarrow-x$, are Demazure characters of type
$A_{2n}^{(2)}$ by the specialization Table \ref{table_spec} in which we have
to replace $q^{1/2}$ by $p$, that is Demazure characters of type $A_{2n}%
^{(2)}$ evaluated in $p^{2}$ instead of $q$. Since $\mathsf{\tilde{Q}}_{\mu
}^{C_{n}}(x;-p,p^{2})$ is the dual polynomial of the Hall-Littlewood
polynomial $P_{\widehat{\mu}+\omega_{m}}^{B_{m}}(y,-p,p^{2})$ for the pairing
$\langle\cdot,\cdot\rangle_{C_{n}\times B_{m}}$, and we have
\[
P_{\widehat{\mu}+\omega_{m}}^{B_{m}}(y;-p,p^{2})=\prod_{j=1}^{m}(y_{j}^{1/2}%
+y_{j}^{-1/2})\,P_{\widehat{\mu}}(y;u,0,0,0;0,q)
\]
by Proposition \ref{Prop-RW}, we can conclude by arguments similar to the
previous ones that
\[
(-1)^{\left\vert \mu\right\vert }P_{\mu}(-x;-p,0,0,0;p^{2},0)=\mathsf{\tilde
{Q}}_{\mu}^{C_{n}}(x;-p,p^{2})=\sum_{\lambda}K_{\widehat{\lambda}+\omega_{m}%
,\widehat{\mu}+\omega_{m}}^{B_{m}}(-p,p^{2})\,s_{\lambda}^{C_{n}}(x).
\]
Therefore, we have%
\[
P_{\mu}(-x;-p,0,0,0;p^{2},0)=\sum_{\lambda}(-1)^{\left\vert \mu\right\vert
}K_{\widehat{\lambda}+\omega_{m},\widehat{\mu}+\omega_{m}}^{B_{m}}(-p,p^{2}%
)\,s_{\lambda}^{C_{n}}(x).
\]
But since we have for any Weyl character of type $C_{n}$ the identity
$s_{\lambda}^{C_{n}}(-x)=(-1)^{\left\vert \mu\right\vert }s_{\lambda}^{C_{n}%
}(x)$, we can drop the signs in the set of variables $x$ and get%
\[
P_{\mu}(x;-p,0,0,0;p^{2},0)=\sum_{\lambda}(-1)^{\left\vert \lambda\right\vert
+\left\vert \mu\right\vert }K_{\widehat{\lambda}+\omega_{m},\widehat{\mu}+\omega_{m}%
}^{B_{m}}(-p,p^{2})\,s_{\lambda}^{C_{n}}(x).
\]
Then Theorem \ref{Th_Bogdan} tells us that $(-1)^{\left\vert \lambda
\right\vert +\left\vert \mu\right\vert }K_{\widehat{\lambda}+\omega_{m},\widehat{\mu
}+\omega_{m}}^{B_{m}}(-p,p^{2})$ is a 1-d sum of type $A_{2n}^{(2)}$ evaluated
in $p^{2}$ instead of $q$. At first glance, the signs seem problematic but in
fact they simplify as we will now explain.
Let us establish the lemma below
\begin{lemma}
    In the polynomials $K_{\widehat{\lambda}+\omega_{m},\widehat{\mu}+\omega_{m}}^{B_{m}
}(p,p^{2})$, all the powers $p^{k}$ which appear are even (resp. odd) when
$\left\vert \widehat{\lambda}\right\vert -\left\vert \widehat{\mu}\right\vert $ is
even (resp. odd).
\end{lemma}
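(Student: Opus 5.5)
We argue directly from the definition (\ref{DefKF(u,t)}) of the unequal-parameter Kostka-Foulkes polynomials as an alternating sum of values of the $(u,t)$-Kostant partition function of type $B_m$:
\[
K_{\widehat{\lambda}+\omega_{m},\widehat{\mu}+\omega_{m}}^{B_{m}}(u,q)=\sum_{w\in W}(-1)^{\ell(w)}\mathcal{P}^{B_m}_{u,q}\bigl(w(\widehat\lambda+\omega_m+\rho^{B_m})-(\widehat\mu+\omega_m+\rho^{B_m})\bigr).
\]
The idea is to track, for each monomial in $\mathcal{P}^{B_m}_{u,q}(\beta)$, the parity of the exponent of $p$ after substituting $u=p$, $q=p^2$, and to show that this parity depends only on $\beta$ through $|\beta|=\beta_1+\cdots+\beta_m$.

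First step: expand the generating series. A monomial in the expansion of $\prod_i(1-ux_i)^{-1}\prod_{i<j}(1-qx_i/x_j)^{-1}(1-qx_ix_j)^{-1}$ comes from choosing $k_i$ copies of the short root $\varepsilon_i$ and some multiset of long roots, contributing $u^{\sum k_i}q^{N}$ where $N$ is the number of long roots used. Under $u=p$, $q=p^2$ its $p$-exponent is $\sum_i k_i+2N\equiv \sum_i k_i \pmod 2$. Each long root $\varepsilon_i\pm\varepsilon_j$ has coordinate sum $0$ or $2$, while each short root has coordinate sum $1$. Hence for a decomposition of $\beta$ we get $|\beta|\equiv\sum_i k_i\pmod 2$. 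So every monomial of $\mathcal{P}^{B_m}_{p,p^2}(\beta)$ has $p$-exponent congruent to $|\beta|$ modulo $2$.

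Second step: compute $|\beta|$ modulo $2$ for $\beta=w(\nu)-\gamma$, with $\nu=\widehat\lambda+\omega_m+\rho^{B_m}$ and $\gamma=\widehat\mu+\omega_m+\rho^{B_m}$. Note that $\omega_m+\rho^{B_m}=\rho^{C_m}=(m,\ldots,1)$ is integral, so $\nu$ and $\gamma$ are integer vectors. An element $w$ of the hyperoctahedral group permutes coordinates and changes some signs, and changing the sign of an integer $a$ does not change $a$ modulo $2$. Thus $|w(\nu)|\equiv|\nu|\pmod 2$, and
\[
|\beta|\equiv|\nu|-|\gamma|=|\widehat\lambda|-|\widehat\mu|\pmod 2.
\]
Combining the two steps, every term of the alternating sum has only even (resp. odd) powers of $p$ when $|\widehat\lambda|-|\widehat\mu|$ is even (resp. odd), and the lemma follows.

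The main point requiring care is the integrality in the second step. For half-integer vectors, sign changes would alter the coordinate sum by odd integers, and the parity argument would fail. This is exactly why the shift by $\rho^{B_m}$ must be combined with $\omega_m$ into $\rho^{C_m}$. Everything else is bookkeeping of exponents.
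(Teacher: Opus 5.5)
Your proof is correct and follows the same two-step route as the paper: the $p$-exponents in $\mathcal{P}_{p,p^{2}}^{B_{m}}(\beta)$ all have the parity of $|\beta|$, and for every $w$ in the hyperoctahedral group, $|\beta|$ has the parity of $|\widehat{\lambda}|-|\widehat{\mu}|$. Your explicit observation that $\omega_m+\rho^{B_m}=\rho^{C_m}$ makes the vectors integral is more careful than the paper's write-up. The paper states the parity claim for $w(\widehat{\lambda}+\omega_{m})-(\widehat{\mu}+\omega_{m})$ without the $\rho$-shift, and in that form sign changes on half-integer coordinates would actually flip the parity.
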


\begin{proof}
    Observe first that for the $(p,p^{2})$-partition function $\mathcal{P}_{p,p^{2}}^{B_{n}}$, the polynomial $\mathcal{P}_{p,p^{2}}^{B_{n}}(\beta),\beta\in\mathbb{Z}^{n}$ has a partity equal to that of $\left\vert \beta\right\vert =\beta_{1}+\cdots+\beta_{n}$. The lemma follows because for any element $w$ in the Weyl group of type $B_m$, the parity of the integer $\left\vert w(\widehat{\lambda}+\omega_{m})-(\widehat{\mu}+\omega_{m})\right\vert$ is equal to that of $\left\vert \widehat{\lambda}-\widehat{\mu}\right\vert$. 
\end{proof}

By using the previous lemma, we obtain
\[
K_{\widehat{\lambda}+\omega_{m},\widehat{\mu}+\omega_{m}}^{B_{m}}(-p,p^{2}%
)=(-1)^{\left\vert \widehat{\lambda}\right\vert -\left\vert \widehat{\mu}\right\vert
}K_{\widehat{\lambda}+\omega_{m},\widehat{\mu}+\omega_{m}}^{B_{m}}(p,p^{2}).
\]
Now observe that%
\[
\left\vert \widehat{\lambda}\right\vert -\left\vert \widehat{\mu}\right\vert
=(nm-\left\vert \lambda\right\vert )-(nm-\left\vert \mu\right\vert
)=-\left\vert \lambda\right\vert +\left\vert \mu\right\vert =\left\vert
\lambda\right\vert +\left\vert \mu\right\vert \;\,\operatorname{mod}2\,,
\]
and we thus have%
\[
(-1)^{\left\vert \lambda\right\vert +\left\vert \mu\right\vert }%
K_{\widehat{\lambda}+\omega_{m},\widehat{\mu}+\omega_{m}}^{B_{m}}(-p,p^{2}%
)=K_{\widehat{\lambda}+\omega_{m},\widehat{\mu}+\omega_{m}}^{B_{m}}(p,p^{2}).
\]
This is eventually the modified Kostka-Foulkes polynomial $K_{\widehat{\lambda
}+\omega_{m},\widehat{\mu}+\omega_{m}}^{B_{m}}(p,p^{2})$, which is a 1-d sum of
type $A_{2n}^{(2)}$ evaluated in $p^{2}$.

Let $\lambda,\mu\subseteq(m^{n})$. We denote by $X_{\lambda,\mu}^{A_{2n}%
^{(2)}}(q)$ the one-dimensional sum associated with the dominant weight
$\lambda$ of the Kirillov-Reshetikhin crystal $B^{(\mu_{1}^{\prime},1)}%
\otimes\cdots\otimes B^{(\mu_{m}^{\prime},1)}$. Here $B^{(k,1)}$ denotes the
KR crystal of type $A_{2n-1}^{(2)}$ and column shape of height $k$. By
removing the $0$-arrows in $B^{(k,1)}$, we get a type $C_{n}$-crystal
isomorphic to the sum%
\[
B(\omega_{k})\oplus B(\omega_{k-1})\oplus\cdots\oplus B(\omega_{1})\oplus
B(0).
\]

\begin{theorem}\label{thm_A2n2}
For any pairs of partition $\lambda,\mu$ in the rectangle $(n^{m})$, we have%
\[
X_{\lambda,\mu}^{A_{2n}^{(2)}}(p^{2})=K_{\widehat{\lambda}+\omega_{m},\widehat{\mu
}+\omega_{m}}^{B_{m}}(p,p^{2}).
\]

\end{theorem}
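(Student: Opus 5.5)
The plan follows the pattern of Theorem \ref{XvsK_type_C}, applied to the twisted $C_n\times B_m$ Cauchy identity (\ref{KingHLB}). There are two ways to expand the left-hand side
\[
\prod_{j=1}^{m}(y_{j}^{1/2}+y_{j}^{-1/2})\prod_{i=1}^{n}\prod_{j=1}^{m}(x_{i}+x_{i}^{-1}+y_{j}+y_{j}^{-1}).
\]

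The first expansion comes from (\ref{KingHLB}). It writes this product as $\sum_{\mu}\mathsf{\tilde{Q}}_{\mu}^{C_{n}}(x;u,q)\,P_{\widehat{\mu}+\omega_{m}}^{B_{m}}(y;u,q)$. Here $\mathsf{\tilde{Q}}_{\mu}^{C_{n}}$ expands in the Weyl characters $s_\lambda^{C_n}$ with coefficients $K^{B_m}_{\widehat\lambda+\omega_m,\widehat\mu+\omega_m}(u,q)$.

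The second expansion comes from Mimachi's dual Cauchy formula (\ref{Cauchy_Koorn}). I use it with $x\mapsto -x$, multiply by $s^{B_m}_{\omega_m}(y)$, and specialize $(a,b,c,d)=(u,0,0,0)$. I then specialize $t\to0$ on the $x$-side, $q\to 0$ on the $y$-side, and finally set $u=-p$, $q=p^2$. On the $y$-side, Proposition \ref{Prop-RW} identifies $\prod_j(y_j^{1/2}+y_j^{-1/2})P_{\widehat\mu}(y;u,0,0,0;0,q)$ with $P^{B_m}_{\widehat\mu+\omega_m}(y;u,q)$. The family $\{P^{B_m}_{\widehat\mu+\omega_m}\}$ is a basis of $\mathrm{char}^{B_m,\mathrm{half}}_{\le n}$, unitriangular with respect to the Weyl characters. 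Since the pairing $\langle\cdot,\cdot\rangle_{C_n\times B_m}$ is nondegenerate, the coefficients of this basis in the two expansions must agree. This yields
\[
(-1)^{|\mu|}P_\mu(-x;-p,0,0,0;p^2,0)=\mathsf{\tilde Q}^{C_n}_\mu(x;-p,p^2).
\]

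Next I remove the signs, exactly as in the computation preceding the statement. First, $s^{C_n}_\lambda(-x)=(-1)^{|\lambda|}s^{C_n}_\lambda(x)$. Second, by the parity lemma just proved, $K^{B_m}(-p,p^2)=(-1)^{|\widehat\lambda|-|\widehat\mu|}K^{B_m}(p,p^2)$. Third, $|\widehat\lambda|-|\widehat\mu|\equiv|\lambda|+|\mu|\pmod 2$. Together these give
\[
P_\mu(x;-p,0,0,0;p^2,0)=\sum_\lambda K^{B_m}_{\widehat\lambda+\omega_m,\widehat\mu+\omega_m}(p,p^2)\,s^{C_n}_\lambda(x).
\]

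To finish I must identify the left side with the $t=u=0$ type $A_{2n}^{(2)}$ Macdonald polynomial at $q\mapsto p^2$. In Table \ref{table_spec_Demaz}, the $A_{2n}^{(2)}$ row is $(0,-1,0,0)$. Here the nonzero parameter equals $-p$, which appears to be the relevant specialization of $D_{n+1}^{(2)}$ or $A_{2n}^{(2)}$ with $q^{1/2}$ replaced by $p$. I will check this identification with care, using the permutation invariance of $a,b,c,d$ and the substitution $x\mapsto -x$, which sends $(a,b,c,d)\mapsto(-a,-b,-c,-d)$. I expect this bookkeeping to be the main obstacle: the correct row of Table \ref{table_spec_Demaz} must be pinned down, along with the matching $q\leftrightarrow p^2$ rescaling of the grading. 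Once that is done, Theorem \ref{Cor_PXS} expands $P^{A_{2n}^{(2)}}_\mu(x;p^2,0)=\sum_\lambda X^{A_{2n}^{(2)}}_{\lambda,\mu}(p^2)s^{C_n}_\lambda(x)$. Comparing coefficients in the basis $s^{C_n}_\lambda$ then proves the theorem.
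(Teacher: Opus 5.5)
Up to your last display, your argument is the paper's own: Mimachi's identity with $x\mapsto -x$, the spin factor, the specialization $(u,0,0,0)$, Proposition~\ref{Prop-RW}, duality, then the parity lemma. One wording slip there: $q,t$ are shared by both factors of Mimachi's identity. There is a single limit $t\to 0$, which produces $(q,0)$ in the $x$-factor and $(0,q)$ in the $y$-factor; you cannot send $q\to 0$ on one side only.

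\textbf{The gap.} The genuine gap is the final identification. It carries the whole content of the theorem, you leave it open, and the tools you propose cannot close it. Precisely, $(-1)^{|\mu|}P_\mu(-x;a,b,c,d;q,t)=P_\mu(x;-a,-b,-c,-d;q,t)$, because $\Delta(-x;a,b,c,d)=\Delta(x;-a,-b,-c,-d)$ and Koornwinder polynomials are unique. So permuting $(a,b,c,d)$ and the sign change $x\mapsto -x$ can never turn $(-p,0,0,0)$ into $(0,-1,0,0)$. The two polynomials really differ. In rank one, $P_{(1)}(x;a,b,c,d;q,t)=x+x^{-1}-\frac{e_1-e_3}{1-e_4}$, with $e_k$ the elementary symmetric functions of $a,b,c,d$. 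This gives $x+x^{-1}+p$ for $(-p,0,0,0)$ and $x+x^{-1}+1=s^{B_1}_{\omega_1}$ for $(0,-1,0,0)$. The latter belongs to the labeling with classical part $B_n$, i.e. to $A_{2n}^{(2,\dagger)}$, which is exactly how $(0,-1,0,0)$ is used in Section~\ref{sec_type_D_halfint}. For $A_2^{(2)}$, by contrast, the components $B(\omega_1)$ and $B(0)$ of $B^{(1,1)}$ sit in different degrees of the level-one Demazure crystal, so the answer cannot be $q$-free.

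\textbf{How to close it.} Use the correct specialization for $A_{2n}^{(2)}$ with classical part $C_n$. In Table~\ref{table_spec}:
\begin{itemize}
\item the pair $(a,b)$ encodes the finite end: $(u,-1)$ if short, $(u^{1/2},-u^{1/2})$ if long;
\item the pair $(c,d)$ encodes the affine end: $(u^{1/2}q^{1/2},-u^{1/2}q^{1/2})$ if long, $(uq^{1/2},-q^{1/2})$ if short.
\end{itemize}
Here $\alpha_n=2\varepsilon_n$ is long and $\alpha_0$ is short, so the parameters are $(u^{1/2},-u^{1/2},uq^{1/2},-q^{1/2})$. This specialization is written, with $u=t$, in the remark following the example after the statement. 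Its $t=u=0$ limit is $(0,0,0,-q^{1/2})$, which at $q=p^2$ is a permutation of $(-p,0,0,0)$. Hence $P_\mu(x;-p,0,0,0;p^2,0)=P^{A_{2n}^{(2)}}_\mu(x;p^2,0)$ with no sign change, and Theorem~\ref{Cor_PXS} finishes the proof.

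This is the identification the paper's proof invokes ("Table~\ref{table_spec} with $q^{1/2}$ replaced by $p$"). The row that actually delivers it is the one above, not the one whose limit is $(0,-1,0,0)$. A check with $n=1$, $m=2$, $\mu=(2)$ via the big $q$-Hermite recurrence gives $s^{C_1}_{(2)}+(p+p^3)s^{C_1}_{(1)}+(p^2+p^4)s^{C_1}_{(0)}$, matching $K^{B_2}(p,p^2)$.

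\textbf{Grading.} The rank-one identity $x+x^{-1}+p=s^{C_1}_{(1)}+p\,s^{C_1}_{(0)}$ settles your rescaling question. One $0$-arrow is weighted by $p=q^{1/2}$, reflecting $\delta'=\delta/2$. So $X(p^2)$ in the statement must be read with $X$ graded by $\delta$. With integer-valued energy the identity reads $X(p)=K(p,p^2)$, the form used in Section~\ref{SecExample}.

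\textbf{A simplification.} If you take $u=p$ instead of $u=-p$, the sign rule above gives $\mathsf{\tilde{Q}}^{C_n}_\mu(x;p,p^2)=P_\mu(x;-p,0,0,0;p^2,0)$ directly, so the parity lemma is not needed.
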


\begin{remark}
    One may observe here (and similarly in the other analogous results that will be obtained in the forthcoming sections), that when $p=1$, we get a Howe-type duality between tensor product multiplicities of type $C_n$ and weight multiplicities of type $B_m$
\end{remark}

\begin{example}
Assume $n=m=3$ and put $\widehat{\lambda}+\omega_{m}=(5/2,3/2,1/2)$ $\widehat{\mu
}+\omega_{m}=(1/2,1/2,1/2)$. We get%
\[
K_{\widehat{\lambda}+\omega_{m},\widehat{\mu}+\omega_{m}}^{B_{m}}(p,p^{2}%
)=p^{13}+2q^{11}+3q^{9}+4q^{7}+3q^{5}+q^{3}.
\]
Then we have $\mu=(3,3,3)$ and $\lambda=(2,1,0)$. This polynomial is the
$A_{6}^{(2)}$ 1-d sum corresponding to the graded multiplicity of $\lambda$ in
the tensor product of $3$-KR column crystals of height $3$.
\end{example}

\begin{remark}
When $u=q$, we know that
\[
\prod_{j=1}^{m}(y_{j}^{1/2}+y_{j}^{-1/2})P_{\widehat{\lambda}}%
(y;q,0,0,0;0,q)=P_{\widehat{\lambda}+\omega_{m}}^{B_{m}}(y,q)
\]
is the Hall-Littlewood polynomial of type $B_{m}$ associated with the half-integer weight
$\widehat{\lambda}+\omega_{m}$. We can yet prove that we have
\[
P_{\mu}(x;q,0,0,0;q,0)=\sum_{\lambda}(-1)^{\left\vert \mu\right\vert
+\left\vert \lambda\right\vert }K_{\widehat{\lambda}+\omega_{m},\widehat{\mu}%
+\omega_{m}}^{B_{m}}(q)\,s_{\lambda}^{C_{n}}(x)\,,
\]
where the polynomials $K_{\widehat{\lambda}+\omega_{m},\widehat{\mu}+\omega_{m}%
}^{B_{m}}(q)$ are the ordinary one-parameter Lusztig $q$-analogues. But then,
the polynomial $P_{\lambda}(-x;q,0,0,0;q,0)$ is not a Demazure character of
type $A_{2n}^{(2)}$ since they come from the Koornwinder specialization%
\[
(a,b,c,d;q,t)=(t^{1/2},-t^{1/2},q^{1/2}t,-q^{1/2};q,0)\underset{t=0}%
{\rightarrow}(0,0,0,-q^{1/2},q,0).
\]
Hence we cannot claim that the 1-d sums of type $A_{2n}^{(2)}$ coincide with
the one-parameter KF polynomials of type $B_{m}$ and half weight (although
they do at $q=1$). Also the polynomials $(-1)^{\left\vert \mu\right\vert
+\left\vert \lambda\right\vert }K_{\widehat{\lambda}+\omega_{m},\widehat{\mu}%
+\omega_{m}}^{B_{m}}(q)$ have not nonnegative integer coefficients in general.
\end{remark}

\subsection{The \texorpdfstring{$X=K$}{X=K} phenomenon in type \texorpdfstring{$D_{n+1}^{(2)}$}{D{n+1}(2)}}

The ideas are quite similar as in the $A_{2n}^{(2)}$ case.\ We will see that
the 1-d sum then equates to two-parameter KF polynomials of type $B_{m}$ but
parametrized this time by partitions. There are nevertheless important
differences.\ We write the Mimachi formula~\eqref{Cauchy_Koorn} as follows:
\begin{equation}\label{mimachi-v2}
\prod_{i=1}^{n}\prod_{j=1}^{m}(y_{j}+y_{j}^{-1}-x_{i}-x_{i}^{-1}%
)=\sum_{\lambda\subseteq (m^{n})}(-1)^{\left\vert \lambda\right\vert }P_{\lambda
}(x;a,b,c,d;q,t)\,P_{\widehat{\lambda}}(y;a,b,c,d;t,q)\,,
\end{equation}
and we use this time the specialization $(x;a,b,c,d;q,t)=(x;u,-1,0,0;q,t)$, 
which by Proposition~\ref{Prop-RW} gives the two-parameter Hall-Littlewood polynomial of
type $B_{m}$%
\[
P_{\widehat{\lambda}}(y;u,q)=P_{\widehat{\lambda}}(y;u,-1,0,0;0,q).
\]
Here we do not need to multiply by the character $s_{\omega_{m}}^{B_{m}}(y).$
We get%
\begin{equation}
\prod_{i=1}^{n}\prod_{j=1}^{m}(y_{j}+y_{j}^{-1}-x_{i}-x_{i}^{-1})=\sum
_{\mu\subseteq (m^{n})}(-1)^{\left\vert \mu\right\vert }P_{\mu}%
(x;u,-1,0,0;q,0)\,P_{\widehat{\mu}}(y;u,q).\label{CauchyBnBm}%
\end{equation}
On the other hand, we have by Proposition 5\ in \cite{Golt}%
\[
\prod_{i=1}^{n}\prod_{j=1}^{m}(y_{j}+y_{j}^{-1}-x_{i}-x_{i}^{-1}%
)=\sum_{\lambda\subseteq (m^{n})}(-1)^{\left\vert \lambda\right\vert }s_{\lambda
}^{B_{n}}(x)\,s_{\widehat{\lambda}}^{B_{m}}(y).
\]
Let $\mathrm{char}_{\leq n}^{B_{m}}(y)$ (resp. Let $\mathrm{char}_{\leq
m}^{B_{m}}(x)$) be the subspace of the character ring of type $B_{m}$ (resp.
$B_{n}$) with basis $\{s_{\widehat{\lambda}}(y),\lambda\subseteq (m^{n})\}$ (resp.
$\{s_{\lambda}(x),\lambda\subseteq (m^{n})\}$). We can consider the pairing on
$\mathrm{char}_{\leq m}^{B_{n}}(x)\times\mathrm{char}_{\leq n}^{B_{m}}(y)$
such that $\langle s_{\mu}^{B_{n}},s_{\widehat{\lambda}}^{B_{m}}\rangle
_{B_{n}\times B_{m}}=(-1)^{\left\vert \lambda\right\vert }\delta_{\lambda,\mu
}$.\ Observe we use here a pairing with the two sides of type $B$.\ Let
$\mathsf{\tilde{Q}}_{\mu}^{B_{n}}(x;u,q)$ be the dual polynomial of the
Hall-Littlewood polynomial $P_{\widehat{\mu}}^{B_{m}}(y;u,q)$ for this pairing.
That is
\[
\langle\mathsf{\tilde{Q}}_{\mu}^{B_{n}}(x;u,q),P_{\widehat{\mu}}^{B_{m}%
}(y;u,q))=(-1)^{\left\vert \mu\right\vert }\delta_{\lambda,\mu}.
\]
We then have
\begin{equation}
\mathsf{\tilde{Q}}_{\mu}^{B_{n}}(x;u,q)=\sum_{\lambda}(-1)^{\left\vert
\lambda\right\vert +\left\vert \mu\right\vert }K_{\widehat{\lambda},\widehat{\mu}%
}^{B_{m}}(u,q)\,s_{\lambda}^{B_{n}}(x).\label{DecQ'Bn}
\end{equation}
Indeed, we can write
\[
\mathsf{\tilde{Q}}_{\mu}^{B_{n}}(x;u,q)=\sum_{\lambda}a_{\lambda,\mu
}s_{\lambda}^{B_{n}}(x)\,,
\]
where
\begin{equation*}
\begin{aligned}
a_{\lambda,\mu}&=(-1)^{\left\vert \lambda\right\vert }\langle\mathsf{\tilde{Q}%
}_{\mu}^{B_{n}}(x;u,q),s_{\widehat{\lambda}}^{B_{m}}(y)\rangle \\
&=(-1)^{\left\vert
\lambda\right\vert }\sum_{\widehat{\nu}}K_{\widehat{\lambda},\widehat{\nu}}^{B_{m}%
}(u,q)\langle\mathsf{\tilde{Q}}_{\mu}^{B_{n}}(x;u,q),P_{\widehat{\nu}}^{B_{m}%
}(y)\rangle \\
&=(-1)^{\left\vert \lambda\right\vert +\left\vert \mu\right\vert
}K_{\widehat{\lambda},\widehat{\mu}}^{B_{m}}(u,q).
\end{aligned}
\end{equation*}
We then get
\[
\prod_{i=1}^{n}\prod_{j=1}^{m}(y_{j}+y_{j}^{-1}-x_{i}-x_{i}^{-1})=\sum
_{\mu\subseteq(m^{n})}(-1)^{\left\vert \mu\right\vert }\mathsf{\tilde{Q}}_{\mu
}^{B_{n}}(x;u,q)\,P_{\widehat{\mu}}^{B_{m}}(y;u,q).
\]
Indeed
\begin{align*}
\prod_{i=1}^{n}\prod_{j=1}^{m}(y_{j}+y_{j}^{-1}-x_{i}-x_{i}^{-1}) &
=\sum_{\lambda\subseteq(m^{n})}(-1)^{\left\vert \lambda\right\vert }s_{\lambda
}^{B_{n}}(x)\,s_{\widehat{\lambda}}^{B_{m}}(y)=\\
\sum_{\mu}\left(  \sum_{\lambda}(-1)^{\left\vert \lambda\right\vert }%
K_{\widehat{\lambda},\widehat{\mu}}^{B_{m}}(u,q)s_{\lambda}^{B_{n}}(x)\right)
P_{\widehat{\mu}}^{B_{m}}(y;u,q) &  =\sum_{\mu}(-1)^{\left\vert \mu\right\vert
}\mathsf{\tilde{Q}}_{\mu}^{B_{n}}(x;u,q)\,P_{\widehat{\mu}}^{B_{m}}(y;u,q)\,,
\end{align*}
where we use (\ref{DecQ'Bn}). From (\ref{CauchyBnBm}), we so deduce the identity
\[
\mathsf{\tilde{Q}}_{\mu}^{B_{n}}(x;u,q)=P_{\mu}(x;u,-1,0,0;q,0).
\]
Now according to our specialization Table \ref{table_spec}, by setting $u=-p$
and $q=p^{2}$ we get from (\ref{DecQ'Bn}) that that $P_{\mu}(x,-p,-1,0,0;p^{2}%
,0)$ is a Demazure character of type $D_{n+1}^{(2)}$.

Recall that $X_{\lambda,\mu}^{D_{n+1}^{(2)}}(q)$ the one-dimensional sum of
type $D_{n+1}^{(2)}$ associated with the tensor product of columns defined by
$\mu$ (with $m$ columns) and the weight $\lambda$. Here the column KR crystals
$B^{(k,1)}$ of type $D_{n+1}^{(2)}$ have a classical structure (obtained by
removing the $0$-arrows) of type $B_{n}$ isomorphic to%
\[
B(\omega_{k})\oplus B(\omega_{k-1})\oplus\cdots\oplus B(\omega_{1})\oplus
B(0).
\]
We have proved the following theorem.

\begin{theorem}\label{thm_Dn+12}
For any pair of partitions $\lambda,\mu$ in the rectangle $(n^{m})$, we have
\[
X_{\lambda,\mu}^{D_{n+1}^{(2)}}(q)=(-1)^{\left\vert \lambda\right\vert
+\left\vert \mu\right\vert }K_{\widehat{\lambda},\widehat{\mu}}^{B_{m}}(-p,p^{2}%
)=(-1)^{\left\vert \widehat{\lambda}\right\vert +\left\vert \widehat{\mu}\right\vert
}K_{\widehat{\lambda},\widehat{\mu}}^{B_{m}}(-p,p^{2})\,.
\]
In particular. the 1-d sums of type $D_{n+1}^{(2)}$ are signed KF
polynomials for the unequal parameters $-p$ and $p^{2}$.
\end{theorem}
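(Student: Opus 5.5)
The plan is to assemble the identities already obtained in this subsection and read off the coefficients in the Weyl character basis of type $B_n$. The statement should be read with $q=p^2$ on the left-hand side, exactly as in the $A_{2n}^{(2)}$ case.

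The first step is to recall the specialized Mimachi identity \eqref{CauchyBnBm}. We take the Askey--Wilson specialization $(a,b,c,d)=(u,-1,0,0)$ in \eqref{mimachi-v2}. We then let $t\to 0$, which is legitimate since both sides are polynomial in $t$ after specialization. By the second equality of Proposition~\ref{Prop-RW}, the rank $m$ factor becomes $P^{B_m}_{\widehat\mu}(y;u,q)$. Comparing with the expansion through the dual basis $\mathsf{\tilde Q}^{B_n}_\mu(x;u,q)$, which is built from the type $B_n\times B_m$ dual Cauchy identity of \cite{Golt}, gives
\[
\mathsf{\tilde Q}^{B_n}_\mu(x;u,q)=P_\mu(x;u,-1,0,0;q,0).
\]
This comparison uses that $\{P^{B_m}_{\widehat\mu}(y;u,q)\}$ is a basis of $\mathrm{char}^{B_m}_{\leq n}(y)$, which follows from unitriangularity. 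Combining with \eqref{DecQ'Bn}, we get
\[
P_\mu(x;u,-1,0,0;q,0)=\sum_\lambda(-1)^{|\lambda|+|\mu|}K^{B_m}_{\widehat\lambda,\widehat\mu}(u,q)\,s^{B_n}_\lambda(x).
\]

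The second step specializes $u=-p$ and $q=p^2$. This turns the parameters into $(-p,-1,0,0;p^2,0)$. Up to permuting the Askey--Wilson parameters, these match the $t=u=0$ row for $D_{n+1}^{(2)}$ in Table~\ref{table_spec_Demaz}, namely $(0,-1,0,-q^{1/2})$ with $q^{1/2}=p$. Indeed, the Table~\ref{table_spec} specialization $(u,-1,uq^{1/2},-q^{1/2})$ at $u=t=0$ yields $(0,-1,0,-p)$.

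This matching is the main obstacle. Our specialization keeps a nonzero $a=-p$, whereas the Macdonald $D_{n+1}^{(2)}$ specialization at $t=u=0$ kills $a$ and $c$. What has to be verified is that the entries $\{-p,-1,0,0\}$ coincide, as a multiset, with $\{0,-1,0,-p\}$. They do: the nonzero entries are $-p$ and $-1$, and the other two are $0$. So by the symmetry of Koornwinder polynomials in $a,b,c,d$, we get $P_\mu(x;-p,-1,0,0;p^2,0)=P^{D_{n+1}^{(2)}}_\mu(x;p^2,0)$.

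We then apply Theorem~\ref{Cor_PXS}, which rests on Theorems~\ref{Th_Bogdan} and~\ref{Th_ST} for twisted type. It gives $P^{D_{n+1}^{(2)}}_\mu(x;p^2,0)=\sum_\lambda X^{D_{n+1}^{(2)}}_{\lambda,\mu}(p^2)\,s^{B_n}_\lambda(x)$.

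The final step compares coefficients of $s^{B_n}_\lambda(x)$ in the two expansions of $P_\mu(x;-p,-1,0,0;p^2,0)$. This yields $X^{D_{n+1}^{(2)}}_{\lambda,\mu}(p^2)=(-1)^{|\lambda|+|\mu|}K^{B_m}_{\widehat\lambda,\widehat\mu}(-p,p^2)$. The second equality in the statement is the parity computation already done in the $A_{2n}^{(2)}$ subsection: $|\widehat\lambda|-|\widehat\mu|=|\mu|-|\lambda|$, which has the same parity as $|\lambda|+|\mu|$.
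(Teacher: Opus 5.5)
Your route is the paper's. You specialize Mimachi's identity at $(a,b,c,d)=(u,-1,0,0)$, let $t\to0$, and identify $P_\mu(x;u,-1,0,0;q,0)$ with the dual element $\mathsf{\tilde Q}^{B_n}_\mu(x;u,q)$ from the $B_n\times B_m$ Cauchy identity. You then set $u=-p$, $q=p^2$ and use the symmetry in the Askey--Wilson parameters to land on the $t=u=0$ row of $D_{n+1}^{(2)}$. Your multiset check is exactly the step the paper leaves implicit. (Minor: the specialized Koornwinder polynomials are rational, not polynomial, in $t$; what you need is that their coefficients are regular at $t=0$.)

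What does not hold is your final application of Theorem~\ref{Cor_PXS}, and with it your reading $q=p^2$. In the normalization of Table~\ref{table_spec}, where $c,d$ carry $q^{1/2}$, one unit of energy is weighted by $q^{1/2}=p$, not by $q$.

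Check the case $m=1$, $\mu=(1)$, $\lambda=\emptyset$. Since $K^{B_1}_{(n),(n-1)}(u,q)=u$, your intermediate identity gives $P_{(1)}(x;-p,-1,0,0;p^2,0)=s^{B_n}_{(1)}(x)+p$. For $n=1$ you can see this directly: the degree-one Askey--Wilson polynomial is $x+x^{-1}-(a+b+c+d)$ here, since two parameters vanish. On the other hand, the trivial classical component of $B^{(1,1)}$ differs in energy by exactly $1$ from $B(\omega_1)$. Your reading would force that difference to be $1/2$.

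The same obstruction appears in general. Since $\widehat\lambda+\rho^{B_m}$ is half-integral, each sign change in $W_{B_m}$ flips the parity of $|w(\widehat\lambda+\rho^{B_m})-(\widehat\mu+\rho^{B_m})|$. Hence $K^{B_m}_{\widehat\lambda,\widehat\mu}(-p,p^2)$ typically contains powers of $p$ of both parities, and cannot be a polynomial in $p^2$. The $D_5^{(2)}$ example of Section~\ref{SecExample} shows this, with its terms $q^{21},q^{19},\dots$.

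The analogy with $A_{2n}^{(2)}$ does not carry over. The parity lemma used there relies on $\widehat\lambda+\omega_m+\rho^{B_m}$ being integral. What your argument actually proves is $X^{D_{n+1}^{(2)}}_{\lambda,\mu}(p)=(-1)^{|\lambda|+|\mu|}K^{B_m}_{\widehat\lambda,\widehat\mu}(-p,p^2)$, i.e.\ the theorem with $q=p$. The paper's proof is silent on this point, but its worked example reads the theorem with $q=p$.
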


\begin{remark}
We cannot simplify the signs appearing in the expressions 
$(-1)^{\left\vert\widehat{\lambda}\right\vert +\left\vert \widehat{\mu}\right\vert }K_{\widehat{\lambda},\widehat{\mu}}^{B_{m}}(-p,p^{2})$ to make appear the polynomials $K_{\widehat{\lambda},\widehat
{\mu}}^{B_{m}}(p,p^{2})$.\ This is coherent with the fact that these last
polynomials do not have nonnegative coefficients in general for $\widehat{\lambda
},\widehat{\mu}$ two partitions (this is nevertheless the case with half-integer
weights as explained previously).
\end{remark}

\section{The \texorpdfstring{$X=K$}{X=K} phenomenon in type \texorpdfstring{$A_{2n}^{(2,\dagger)}$}{A{2n}(2)} and type \texorpdfstring{$D_{m}$}{Dm}
Kostka-Foulkes polynomials for half-integer weights}
\label{sec_type_D_halfint}

To connect the type $D_{m}$ Lusztig $q$-analogues of type $D_{m}$ and half
integer weights with some 1-d sum, the idea is to consider the previous identity
\[
\mathsf{\tilde{Q}}_{\mu}^{B_{n}}(x;u,q)=P_{\mu}(x;u,-1,0,0;q,0)\,,
\]
and then specialize $u$ to $0$. We have
\[
K_{\widehat{\lambda},\widehat{\mu}}^{B_{m}}(u,q)=\sum_{w\in W_{B_{m}}}(-1)^{\ell
_{B_{m}}(w)}\mathcal{P}_{u,q}^{B_{m}}(w(\widehat{\lambda}+\rho_{B_{m}})-(\widehat{\mu
}+\rho_{B_{m}}))\,,
\]
where $\mathcal{P}_{u,q}$ is the $(u,q)$-Kostant partition function defined by%
\[
\prod_{i=1}^{m}(1-ux_{i})^{-1}\prod_{1\leq i<j\leq m}(1-qx_{i}x_{j}%
)=\sum_{\beta\in\mathbb{Z}^{m}}\mathcal{P}_{u,q}(\beta)x^{\beta}.
\]
Now when $u=0$, $\mathcal{P}_{0,q}=\mathcal{P}_{q}^{D_{m}}$ is the $q$-Kostant
partition function of type $D_{m}$. Moreover we have $W_{B_{m}}=W_{D_{m}}%
{\textstyle\bigsqcup}
W_{D_{m}}s_{\varepsilon_{m}}$ where $\iota=s_{\varepsilon_{m}}$ acts on
$\mathbb{Z}^{m}$ by changing the sign of the last coordinate.\ Also for any
$w$ in $W_{D_{m}},$ one has $(-1)^{\ell_{B_{m}}(w)}=(-1)^{\ell_{D_{m}}(w)}%
$.\ Finally $\rho_{B_{m}}=\rho_{D_{m}}+(1/2)^{m}$. This thus gives%
\[
K_{\widehat{\lambda},\widehat{\mu}}^{B_{m}}(0,q)=K_{\widehat{\lambda}+(1/2)^{m},\widehat{\mu
}+(1/2)^{m}}^{D_{m}}(q)-K_{\iota(\widehat{\lambda}+(1/2)^{m}),\widehat{\mu}+(1/2)^{m}%
}^{D_{m}}(q)
\]
where $\iota$ is the involution of the weight lattice induced by the type
$D_{m}$ Dynkin diagram automorphism permuting the nodes $m$ and $m-1$ (it
changes the sign of the last coordinates of the weights).\ Alternatively, we
also have
\[
K_{\widehat{\lambda},\widehat{\mu}}^{B_{m}}(0,q)=K_{\widehat{\lambda}+(1/2)^{m},\widehat{\mu
}+(1/2)^{m}}^{D_{m}}(q)-K_{\widehat{\lambda}+(1/2)^{m},\iota(\widehat{\mu}+(1/2)^{m}%
)}^{D_{m}}(q)
\]
because $\mathcal{P}^{D_{m}}(\beta)=\mathcal{P}^{D_{m}}(\iota(\beta))$.\ 

In fact this difference simplifies.\ To see this, recall that $\beta
\in\mathbb{Z}^{m}$ belongs to the set $Q_{+}^{D_{m}}$ of nonnegative
combinations of positive roots of type $D_{m}$ if and only if $\beta
_{1}+\cdots+\beta_{i}\geq0$ for any $i=1,\ldots,m$ and $\left\vert
\beta\right\vert =\beta_{1}+\cdots+\beta_{m}$ is even.\ Assume $K_{\widehat
{\lambda}+(1/2)^{m},\widehat{\mu}+(1/2)^{m}}^{D_{m}}(q)\neq0$. Then $\widehat{\lambda
}+(1/2)^{m}-(\widehat{\mu}+(1/2)^{m})$ belongs to $Q_{+}^{D_{m}}$ and therefore
$\left\vert \widehat{\lambda}\right\vert -\left\vert \widehat{\mu}\right\vert $ is
even.\ But now $\widehat{\lambda}+(1/2)^{m}-\iota(\widehat{\mu}+(1/2)^{m})=\left\vert
\widehat{\lambda}\right\vert -\left\vert \widehat{\mu}\right\vert +2\widehat{\mu}_{m}+1$ is odd
therefore $K_{\widehat{\lambda}+(1/2)^{m},\iota(\widehat{\mu}+(1/2)^{m})}^{D_{m}}(q)=0$.\ Similarly, when $K_{\widehat{\lambda}+(1/2)^{m},\iota(\widehat{\mu}%
+(1/2)^{m})}^{D_{m}}(q)\neq0$, then $K_{\widehat{\lambda}+(1/2)^{m},\widehat{\mu
}+(1/2)^{m}}^{D_{m}}(q)=0$.\ Finally, we have
\[
K_{\widehat{\lambda},\widehat{\mu}}^{B_{m}}(0,q)=\left\{
\begin{array}
[c]{c}%
K_{\widehat{\lambda}+(1/2)^{m},\widehat{\mu}+(1/2)^{m}}^{D_{m}}(q)\text{ when
}\left\vert \widehat{\lambda}\right\vert -\left\vert \widehat{\mu}\right\vert \text{
is even,}\\
-K_{\widehat{\lambda}+(1/2)^{m},\iota(\widehat{\mu}+(1/2)^{m})}^{D_{m}}(q)\text{when
}\left\vert \widehat{\lambda}\right\vert -\left\vert \widehat{\mu}\right\vert \text{
is odd,}%
\end{array}
\right.
\]
so that%
\[
(-1)^{\left\vert \lambda\right\vert +\left\vert \mu\right\vert }%
K_{\widehat{\lambda},\widehat{\mu}}^{B_{m}}(0,q)=\left\{
\begin{array}
[c]{c}%
K_{\widehat{\lambda}+(1/2)^{m},\widehat{\mu}+(1/2)^{m}}^{D_{m}}(q)\text{ when
}\left\vert \widehat{\lambda}\right\vert -\left\vert \widehat{\mu}\right\vert \text{
is even,}\\
K_{\widehat{\lambda}+(1/2)^{m},\iota(\widehat{\mu}+(1/2)^{m})}^{D_{m}}(q)\text{when
}\left\vert \widehat{\lambda}\right\vert -\left\vert \widehat{\mu}\right\vert \text{
is odd.}%
\end{array}
\right.
\]

With our specialization $u=0$, we moreover get%
\[
\mathsf{\tilde{Q}}_{\mu}^{B_{n}}(x;0,q)=P_{\mu}(x,0,-1,0,0;0,0)=\sum_{\lambda
}(-1)^{\left\vert \lambda\right\vert +\left\vert \mu\right\vert }%
K_{\widehat{\lambda},\widehat{\mu}}^{B_{m}}(0,q)\,s_{\lambda}^{B_{n}}(x).
\]
According to our specialization table \ref{table_spec}, the polynomial
$P_{\mu}(x;0,-1,0,0;0,0)$ is an affine Demazure character of type
$A_{2n}^{(2,\dagger)}$.\ Observe here the difference compared to the case of half-integer weights in type $B_{m}$.

Denote by $X_{\lambda,\mu}^{A_{2n}^{(2,\dagger)}}(q)$ the one-dimensional sum
of type $A_{2n}^{(2,\dagger)}$ associated with the tensor product of columns
defined by $\mu$ (with $m$ columns) and the weight $\lambda$. Here the column
KR crystals $B^{(k,1)}$ of type $A_{2n}^{(2,\dagger)}$ have a classical
structure (obtained by removing the $0$-arrows) of type $B_{n}$ isomorphic to
the connected crystal $B(\omega_{k})$. We have proved the following theorem

\begin{theorem}\label{thm_A2n2dag}
For any pair of partitions $\lambda,\mu$ in the rectangle $(n^{m})$, we have
\[
X_{\lambda,\mu}^{A_{2n}^{(2,\dagger)}}(q)=\left\{
\begin{array}
[c]{c}%
K_{\widehat{\lambda}+(1/2)^{m},\widehat{\mu}+(1/2)^{m}}^{D_{m}}(q)\text{ when
}\left\vert \widehat{\lambda}\right\vert -\left\vert \widehat{\mu}\right\vert \text{
is even,}\\
K_{\widehat{\lambda}+(1/2)^{m},\iota(\widehat{\mu}+(1/2)^{m})}^{D_{m}}(q)\text{ when
}\left\vert \widehat{\lambda}\right\vert -\left\vert \widehat{\mu}\right\vert \text{
is odd.}%
\end{array}
\right.
\]
\end{theorem}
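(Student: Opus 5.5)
The plan is to specialise the type $B_n\times B_m$ Cauchy argument of the $D_{n+1}^{(2)}$ case at $u=0$, identify both sides, and then rewrite the resulting $B_m$ polynomials as $D_m$ ones.

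\emph{Step 1: the identity at $u=0$.} Start from the identity $\mathsf{\tilde{Q}}_{\mu}^{B_{n}}(x;u,q)=P_{\mu}(x;u,-1,0,0;q,0)$. It was obtained by comparing the specialised Mimachi formula (\ref{CauchyBnBm}) with the Goltz $B_n\times B_m$ Cauchy identity. Set $u=0$ in it. This is legitimate because both sides are polynomial in $u$. The Hall-Littlewood polynomials $P^{B_m}_{\widehat\mu}(y;u,q)$ are unitriangular in the Weyl characters with coefficients in $\mathbb{Z}[u,q]$, so the dual basis element $\mathsf{\tilde Q}_\mu^{B_n}$ specialises without trouble. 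Together with (\ref{DecQ'Bn}) this gives
\[
P_{\mu}(x;0,-1,0,0;q,0)=\sum_{\lambda}(-1)^{|\lambda|+|\mu|}K^{B_m}_{\widehat\lambda,\widehat\mu}(0,q)\,s^{B_n}_\lambda(x).
\]

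\emph{Step 2: the left side is a $1$-d sum.} By Table~\ref{table_spec_Demaz}, the Askey-Wilson parameters $(1,-1,0,-q^{1/2})$ are those of $B_n^{(1,\dagger)}$. The parameters $(0,-1,0,0)$, after permuting, are exactly the $t=u=0$ specialisation listed for $A_{2n}^{(2)}$. I must check carefully which of $A_{2n}^{(2)}$ or $A_{2n}^{(2,\dagger)}$ they correspond to, since both involve the parameter set $u,-1,u^{1/2}q^{1/2},-u^{1/2}q^{1/2}$ up to relabelling. The check is to compare the classical decomposition of the KR crystals: under the $A_{2n}^{(2,\dagger)}$ labelling, $B^{(k,1)}$ restricts to the single classical crystal $B(\omega_k)$, matching the $m=1$ case where $P_{(1^k)}$ must be the Weyl character $s_{\omega_k}^{B_n}$. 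Granting the identification with $A_{2n}^{(2,\dagger)}$, Theorem~\ref{Cor_PXS} (Step 3 of its proof) yields
\[
X^{A_{2n}^{(2,\dagger)}}_{\lambda,\mu}(q)=(-1)^{|\lambda|+|\mu|}K^{B_m}_{\widehat\lambda,\widehat\mu}(0,q).
\]

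\emph{Step 3: from $B_m$ at $u=0$ to $D_m$.} Apply the Lusztig formula (\ref{DefKF(u,t)}) at $u=0$, where $\mathcal P_{0,q}^{B_m}=\mathcal P^{D_m}_q$. Split $W_{B_m}=W_{D_m}\sqcup W_{D_m}\iota$, and use $\rho_{B_m}=\rho_{D_m}+(1/2)^m$ together with $(-1)^{\ell_{B_m}(w)}=(-1)^{\ell_{D_m}(w)}$ on $W_{D_m}$ and the extra sign $-1$ on the coset $W_{D_m}\iota$. This gives
\[
K^{B_m}_{\widehat\lambda,\widehat\mu}(0,q)=K^{D_m}_{\widehat\lambda+(1/2)^m,\widehat\mu+(1/2)^m}(q)-K^{D_m}_{\widehat\lambda+(1/2)^m,\iota(\widehat\mu+(1/2)^m)}(q).
\]
Here $\iota$ has been moved onto $\widehat\mu$ using $\iota$-invariance of $\mathcal P^{D_m}$. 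A parity argument on the $D_m$ root lattice then kills one of the two terms. Membership in $Q_+^{D_m}$ requires an even coordinate sum, and applying $\iota$ changes the parity of the difference by $2\widehat\mu_m+1$, which is odd. So exactly one term survives, according to the parity of $|\widehat\lambda|-|\widehat\mu|$.

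\emph{Step 4: signs.} Since $|\widehat\lambda|-|\widehat\mu|\equiv|\lambda|+|\mu|\pmod 2$, the sign $(-1)^{|\lambda|+|\mu|}$ is $+1$ in the even case. In the odd case it cancels the minus sign in front of the $\iota$-term. This gives the stated formula.

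The main obstacle I expect is Step 2. Theorem~\ref{Th_Bogdan} is only known for the dagger types via the intertwiner argument sketched in Step 3 of Theorem~\ref{Cor_PXS}, and the normalisation of the energy must match the $q$-grading (a possible overall power of $q$). I would fix the normalisation by checking the case $\lambda=\mu$, or the lowest degree term, where both sides should equal $1$.
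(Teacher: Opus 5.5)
Your proposal is correct and follows the paper's own argument essentially step for step: specialize $\mathsf{\tilde{Q}}_{\mu}^{B_{n}}(x;u,q)=P_{\mu}(x;u,-1,0,0;q,0)$ at $u=0$, read the left side as an $A_{2n}^{(2,\dagger)}$ Demazure character via Theorem~\ref{Cor_PXS}, split $W_{B_m}=W_{D_m}\sqcup W_{D_m}\iota$ to write $K^{B_m}_{\widehat\lambda,\widehat\mu}(0,q)$ as a difference of two $D_m$ Kostka--Foulkes polynomials, and kill one of them by the parity of $|\widehat\lambda|-|\widehat\mu|$. Your caution in Step~2 is justified: Table~\ref{table_spec_Demaz} lists $(0,-1,0,0)$ under $A_{2n}^{(2)}$, while the paper's proof (like yours) treats it as $A_{2n}^{(2,\dagger)}$, and your $m=1$ check $P_{(1^k)}(x;0,-1,0,0;q,0)=s^{B_n}_{(1^k)}(x)$ correctly confirms that the dagger labelling is the consistent one. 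The general identification still rests on the same black box, Step~3 of Theorem~\ref{Cor_PXS}, that the paper invokes.
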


\section{Untwisted cases \texorpdfstring{$B_{n}^{(1)},\;B_{n}^{(1,\dagger)},\;C_{n}^{(1)},$}{} and \texorpdfstring{$D_{n}^{(1)}$}{D{n}(1)}}
\label{sec_nontwisted}

\subsection{General considerations about the untwisted cases}

Let us study whether we can obtain 1-d sums of untwisted types by similar
techniques as KF polynomials. First of all, we need to make appear
Hall-Littlewood polynomials thanks to specializations in the Koornwinder polynomials
$P_{\widehat{\mu}}(y,a,b,c,d;t,q)$ (keep in mind the flip of $(q,t)$ into
$(t,q)$). This can be done in several ways from Macdonald specializations at
$t=0$ but which will eventually produce the same Hall-Littlewood polynomial at the end. For
example Hall-Littlewood polynomials of type $C_{m}$ can be obtained from any affine root
system whose classical finite subroot system (obtained by removing the zero
node) is of type $C_{m}$, hence $C_{m}^{(1)}$, $A_{2m-1}^{(2)}$, and $A_{2n}%
^{(2)}$. We get the following table, where we use the parameter $u$ (a priori different from $q$) related to the orbits of $\varepsilon_{n}$ or
$2\varepsilon_{n}$.%
\begin{equation}%
\begin{tabular}[c]{ll}\hline
Type & $(y;a,b,c,d;t,q)$\\
\hline
$B_{m}$ integer weights & $(y;u,-1,0,0;0,q)$\\
$B_{m}$ half-integer weights & $(y;u,0,0,0;0,q)$\\
$C_{m}$ & $(y;u^{1/2},-u^{1/2},0;0,q)$\\
$D_{m}$ & $(y;1,-1,0,0;0,q)$\\\hline
\end{tabular}
\ \ \ \ \ \ \ \label{Table HL}%
\end{equation}
Now, if we want to make appear
1-d sums of untwisted affine types from Koornwinder polynomials beyond
type $A_{n-1}^{(1)}$, the specialization in the parameters $(x,a,b,c,d;q,t)$
should be done according to the table below.
\begin{subequations}
\label{0}%
\begin{equation}%
\begin{tabular}[c]{ll}\hline
Type & $(T_{N}^{(a)},a,b,c,d;q,t)$\\\hline
$B_{n}^{(1)}$ & $(x;0,-1,q^{1/2},-q^{1/2};q,0)$\\
$B_{n}^{(1,\dagger)}$ & $(x;0,1,-1,-q^{1/2};q,0)$\\
$C_{n}^{(1)}$ & $(x;0,0,0,0;q,0)$\\
$D_{n}^{(1)}$ & $(x;-1,1,q^{1/2},-q^{1/2};q,0)$\\\hline
\end{tabular}
\ \ \ \ \ \ \ \label{Table1dsumsNT}%
\end{equation}
One immediately sees that the in the first (HL specialization) table, at most two
parameters $(a,b,c,d)$ are non-zero.\ Therefore, there is no chance that these
specializations can make appear 1-d sums of type $B_{n}^{(1)}$, $B_{n}%
^{(1,\dagger)}$, or $D_{n}^{(1)}$, where we need at least three nonzero
parameters. In contrast, we can make appear 1-d sums of type $C_{n}^{(1)}$
from type $C_{m}$ two parameter HL specialization where we put $u=0$. This
will be studied in the following paragraph.
\end{subequations}

\subsection{Type \texorpdfstring{$C_{n}^{(1)}$}{C{n}(1)} 1-d sums}

We also start from~\eqref{mimachi-v2}, 
%\[
%\prod_{i=1}^{n}\prod_{j=1}^{m}(y_{j}+y_{j}^{-1}-x_{i}-x_{i}^{-1})=\sum_{\mu\subseteq(m^{n})}(-1)^{\left\vert \widehat{\mu}\right\vert }P_{\mu}(x;a,b,c,d;q,t)\,P_{\widehat{\mu}}(y;a,b,c,d;t,q)
%\]
and we use the specialization $(x;a,b,c,d;q,t)=(x;0,0,0,0;q,0)$, which gives
the two-parameter Hall-Littlewood polynomial of type $C_{m}$%
\[
P_{\widehat{\mu}}^{C_{m}}(y;0,q)=P_{\widehat{\mu}}(y;0,0,0,0;0,q).
\]
We get%
\[
\prod_{i=1}^{n}\prod_{j=1}^{m}(y_{j}+y_{j}^{-1}-x_{i}-x_{i}^{-1})=\sum
_{\mu\subseteq(m^{n})}(-1)^{\left\vert \mu\right\vert }P_{\mu}%
(x;0,0,0,0;q,0)\,P_{\widehat{\mu}}(y;0,q)\,,
\]
but since the $P_{\mu}(x,0,0,0,0;q,0)$'s are in the character ring of type
$C_{n}$ (because this is a Demazure character of type $C_{n}^{(1)}$), we
obtain
\[
\prod_{i=1}^{n}\prod_{j=1}^{m}(y_{j}+y_{j}^{-1}+x_{i}+x_{i}^{-1})=\sum
_{\mu\subseteq(m^{n})}P_{\mu}(x;0,0,0,0;q,0)\,P_{\widehat{\mu}}(y;0,q)\,,
\]
and%
\[
\prod_{i=1}^{n}\prod_{j=1}^{m}(y_{j}+y_{j}^{-1}+x_{i}+x_{i}^{-1}%
)=\sum_{\lambda\subseteq(m^{n})}s_{\lambda}^{C_{n}}(x)\,s_{\widehat{\lambda}}^{C_{m}%
}(y).
\]
We can consider yet the pairing on $\mathrm{char}_{\leq m}^{C_{n}}%
(x)\times\mathrm{char}_{\leq n}^{C_{m}}(y)$ such that $\langle s_{\mu}^{C_{n}%
},s_{\widehat{\lambda}}^{C_{m}})_{C_{n}\times C_{m}}=\delta_{\lambda,\mu}$.\ Let
$\widehat{\mathsf{Q}}_{\mu}^{C_{n}}(x;0,q)$ be the dual polynomial of the
Hall-Littlewood polynomial $P_{\widehat{\mu}}^{C_{m}}(y;0,q)$ for this pairing.
That is
\[
\langle\widehat{\mathsf{Q}}_{\mu}^{C_{n}}(x;0,q),P_{\widehat{\mu}}^{C_{m}%
}(y;0,q)\rangle_{C_{n}\times C_{m}}=\delta_{\lambda,\mu}.
\]
This should not be confused with the polynomial $\mathsf{Q}_{\mu}^{C_{n}}(x;q)$
used in Section \ref{Sec_A_(2n-1)^(2)} which is the dual of the ordinary
(one-parameter) Hall-Littlewood polynomial of type $C_{m}$. We then have
\[
\widehat{\mathsf{Q}}_{\mu}^{C_{n}}(x;0,q)=\sum_{\lambda}K_{\widehat{\lambda}%
,\widehat{\mu}}^{C_{m}}(0,q)\,s_{\lambda}^{C_{n}}(x)\,,
\]
and we get
\[
\prod_{i=1}^{n}\prod_{j=1}^{m}(y_{j}+y_{j}^{-1}+x_{i}+x_{i}^{-1})=\sum
_{\mu\subseteq(m^{n})}\widehat{\mathsf{Q}}_{\mu}^{C_{n}}(x;0,q)\,P_{\widehat{\mu}%
}^{C_{m}}(y;0,q).
\]
We deduce the identity%
\[
\widehat{\mathsf{Q}}_{\mu}^{C_{n}}(x;0,q)=P_{\mu}(x;0,0,0,0;q,0).
\]
Therefore the one-dimensional sums of type $C_{n}^{(1)}$ associated with the
tensor product of columns defined by $\mu$ (with $m$ columns) coincide with
the signed KF polynomials $K_{\widehat{\lambda},\widehat{\mu}}^{C_{m}}(0,q)$ of type
$C_{m}$. Let us now examine more precisely what are these polynomials
$K_{\widehat{\lambda},\widehat{\mu}}^{C_{m}}(0,q)$.\ We have%
\[
K_{\widehat{\lambda},\widehat{\mu}}^{C_{m}}(0,q)=\sum_{w\in W_{C_{m}}}\varepsilon
(w)\mathcal{P}_{0,q}^{C_{m}}(w(\widehat{\lambda}+\rho_{C_{m}})-(\widehat{\mu}%
+\rho_{C_{m}}))\,,
\]
and the Kostant partition function $\mathcal{P}_{0,q}^{C_{m}}$ is nothing but
the Kostant partition $\mathcal{P}_{0,q}^{D_{m}}$ function for type $D_{m}$.
We also have $\rho_{C_{m}}=\rho_{D_{m}}+(1)^{m}$ and $W_{C_{m}}=W_{D_{m}}%
{\textstyle\bigsqcup}
W_{D_{m}}s_{\varepsilon_{m}}$.\ Therefore, we get%
\[
K_{\widehat{\lambda},\widehat{\mu}}^{C_{m}}(0,q)=K_{\widehat{\lambda}+(1)^{m},\widehat{\mu
}+(1)^{m}}^{D_{m}}(q)-K_{\widehat{\lambda}+(1)^{m},\iota(\widehat{\mu}+(1)^{m}%
)}^{D_{m}}(q).
\]
Observe that contrary to the previous case of the polynomials $K_{\widehat
{\lambda},\widehat{\mu}}^{B_{m}}(0,q)$, we cannot conclude by saying that
$K_{\widehat{\lambda}+(1)^{m},\widehat{\mu}+(1)^{m}}^{D_{m}}(q)$ and $K_{\widehat{\lambda
}+(1)^{m},\iota(\widehat{\mu}+(1)^{m})}^{D_{m}}(q)$ cannot be simultaneously
nonzero polynomials.

Denote by $X_{\lambda,\mu}^{C_{n}^{(1)}}(q)$ the one-dimensional sum of type
$C_{n}^{(1)}$ associated with the tensor product of columns defined by $\mu$
(with $m$ columns) and the weight $\lambda$. Here the column KR crystals
$B^{(k,1)}$ of type $C_{n}^{(1)}$ have a classical structure (obtained by
removing the $0$-arrows) of type $C_{n}$ isomorphic to $B^{C_{n}}(\omega_{k}%
)$. We have proved the following theorem.

\begin{theorem}\label{thm_Cn1}
For any pair of partitions $\lambda,\mu$ in the rectangle $(n^{m})$, we have
\[
X_{\lambda,\mu}^{C_{n}^{(1)}}(q)=K_{\widehat{\lambda},\widehat{\mu}}^{C_{m}%
}(0,q)=K_{\widehat{\lambda}+(1)^{m},\widehat{\mu}+(1)^{m}}^{D_{m}}(q)-K_{\widehat{\lambda
}+(1)^{m},\iota(\widehat{\mu}+(1)^{m})}^{D_{m}}(q).
\]

\end{theorem}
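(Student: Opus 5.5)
The statement collects the computation carried out just above it, so the proof has two parts. The first is the identity $X^{C_n^{(1)}}_{\lambda,\mu}(q)=K^{C_m}_{\widehat\lambda,\widehat\mu}(0,q)$. The second is the rewriting of $K^{C_m}_{\widehat\lambda,\widehat\mu}(0,q)$ as a difference of type $D_m$ Kostka--Foulkes polynomials.

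For the first equality, I would start from Mimachi's formula~\eqref{mimachi-v2} with $a=b=c=d=0$ and let $t\to0$. On the $y$ side, Table~\eqref{Table HL} identifies $P_{\widehat\mu}(y;0,0,0,0;0,q)$ with the two-parameter Hall--Littlewood polynomial $P^{C_m}_{\widehat\mu}(y;0,q)$. This is legitimate because the Hall--Littlewood basis of \S\ref{subsec_HL} is well defined at $u=0$ and is still unitriangular with respect to the Weyl characters. On the $x$ side, by Table~\ref{table_spec_Demaz}, the polynomial $P_\mu(x;0,0,0,0;q,0)$ is the $t=0$ specialization of the $C_n^{(1)}$ Macdonald polynomial. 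By Theorem~\ref{Cor_PXS} (via \cite{LNSSS}, Step 1), it therefore equals $\sum_\lambda X^{C_n^{(1)}}_{\lambda,\mu}(q)\,s^{C_n}_\lambda(x)$.

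Next I would remove the signs with the substitution $x\mapsto -x$. Every $s^{C_n}_\lambda(x)$ has parity $|\lambda|$, and the Weyl module character is a combination of $s^{C_n}_\lambda$ with $|\lambda|\equiv|\mu|\pmod 2$, since the weights of $B_\mu$ all have this parity. Hence $(-1)^{|\mu|}P_\mu(-x;0,0,0,0;q,0)=P_\mu(x;0,0,0,0;q,0)$, and the identity takes the form of the King--Cauchy identity~\eqref{Cauchy_C}. Because the Weyl characters give a basis, $(P_\mu(x;0,0,0,0;q,0))_\mu$ must be the dual basis $\widehat{\mathsf Q}^{C_n}_\mu(x;0,q)$ of $(P^{C_m}_{\widehat\mu}(y;0,q))_\mu$ for $\langle\cdot,\cdot\rangle_{C_n\times C_m}$. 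The argument of Lemma~\ref{Lemma_Qins} then expands $\widehat{\mathsf Q}^{C_n}_\mu$ on the Weyl characters with coefficients $K^{C_m}_{\widehat\lambda,\widehat\mu}(0,q)$. Comparing coefficients of $s^{C_n}_\lambda(x)$ gives the first equality.

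For the second equality, I would use formula~\eqref{DefKF(u,t)} with $u=0$. In that case the factor $\prod(1-uy_i^2)^{-1}$ disappears, so $\mathcal P^{C_m}_{0,q}=\mathcal P^{D_m}_q$. I would then write $W_{C_m}=W_{D_m}\sqcup W_{D_m}\iota$, note that $\varepsilon_{C_m}(w)=\varepsilon_{D_m}(w)$ on $W_{D_m}$ while $\iota$ contributes a sign $-1$, and use $\rho_{C_m}=\rho_{D_m}+(1)^m$. The sum over $W_{D_m}$ gives $K^{D_m}_{\widehat\lambda+(1)^m,\widehat\mu+(1)^m}(q)$. The coset sum, after moving $\iota$ onto $\widehat\mu+(1)^m$ using $\mathcal P^{D_m}(\iota\beta)=\mathcal P^{D_m}(\beta)$ and the fact that $\iota$ normalizes $W_{D_m}$, gives $-K^{D_m}_{\widehat\lambda+(1)^m,\iota(\widehat\mu+(1)^m)}(q)$. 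This mirrors the type $B_m$ computation in Section~\ref{sec_type_D_halfint}.

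The main obstacle is the coset bookkeeping. One must check that $\iota(\widehat\lambda+\rho_{C_m})$ can be rewritten with $\rho_{D_m}$ correctly, namely $w\iota(\widehat\lambda+(1)^m+\rho_{D_m})-(\widehat\mu+(1)^m+\rho_{D_m})$ reindexed by $w\mapsto\iota w\iota$, so that the result is a genuine type $D_m$ Kostka--Foulkes polynomial evaluated at $\iota(\widehat\mu+(1)^m)$. That weight is dominant for $D_m$ because the last coordinate changes from $\widehat\mu_m+1$ to $-(\widehat\mu_m+1)$, which $D_m$-dominance permits. The other delicate point is the parity claim used to clear the signs in the $x\mapsto -x$ step.
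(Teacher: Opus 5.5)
Your proposal is correct and follows the paper's proof essentially step by step. The steps are: Mimachi's identity at $a=b=c=d=0$, $t=0$; removal of the sign by $x\mapsto -x$; identification of $P_\mu(x;0,0,0,0;q,0)$ with the dual basis $\widehat{\mathsf Q}^{C_n}_\mu(x;0,q)$ through the King--Cauchy identity; and the coset decomposition $W_{C_m}=W_{D_m}\sqcup W_{D_m}\iota$ together with $\mathcal P^{C_m}_{0,q}=\mathcal P^{D_m}_q$ and $\rho_{C_m}=\rho_{D_m}+(1)^m$. Your parity justification of the sign flip, via the weights of $B_\mu$, is more precise than the paper's remark, and the coset check you flag goes through because $\rho_{D_m}=(m-1,\ldots,1,0)$ is fixed by $\iota$.
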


\section{A worked example}

\label{SecExample}
We illustrate \Cref{thm_An-11}, \Cref{thm_A2n-12}, \Cref{thm_A2n-12dag}, \Cref{thm_Dn+12}, \Cref{thm_A2n2}, \Cref{thm_A2n2dag}, \Cref{thm_Cn1} on an example.
Take $n=4$, $m = 4$, and 
$$\mu = (4,3,2,0) = \gyoung(<><><><>,<><><>,<><>) \text{\quad so that \quad }
\widehat{\mu} = (3,2,1,1)=\gyoung(<><><>,<><>,<>,<>).$$
Choose 
$$\la=(1,0,0,0)=\gyoung(<>) \text{\quad so that \quad }\widehat{\la} = (4,4,4,3)=\gyoung(<><><><>,<><><><>,<><><><>,<><><>).$$

\medskip

The one-dimensional sums are computed using $B^{(3,1)}\otimes B^{(3,1)}\otimes B^{(2,1)}\otimes B^{(1,1)}$, 
the tensor product of column Kirillov-Reshetikhin crystals of shape
$\mu'=(3,3,2,1)$, that is
$$
{
%\scriptsize
\gyoung(<>,<>,<>)
\otimes
\gyoung(<>,<>,<>)
\otimes
\gyoung(<>,<>)
\otimes
\gyoung(<>)
}.
$$

\subsection{Type \texorpdfstring{$A_{n-1}^{(1)}$}{A{n-1}(1)}}

This is the classical setting of \cite{NY1997}, which does not require $\widehat\mu$ nor $\widehat\la$,
but $\mu'$ and $\la'$ instead.

\medskip

Let us first compute the one-dimensional sum $X_{\la,\mu}(q)$, which can be done for instance in Sage.
We get the following three highest weight vertices with corresponding energy function:
$$
\begin{array}{ll}
\hline
\text{Highest weight vertex} & \text{Energy}  
\\
\hline
{
\scriptsize
\gyoung(<1>,<3>,<4>)\otimes\gyoung(<1>,<2>,<4>)\otimes\gyoung(<2>,<3>)\otimes\gyoung(<1>)
}

&
4
\\
{\scriptsize\gyoung(<1>,<3>,<4>)\otimes\gyoung(<2>,<3>,<4>)\otimes\gyoung(<1>,<2>)\otimes\gyoung(<1>)}
&
2
\\
{\scriptsize\gyoung(<2>,<3>,<4>)\otimes\gyoung(<1>,<3>,<4>)\otimes\gyoung(<1>,<2>)\otimes\gyoung(<1>)}
&
3
\vspace{5pt}
\\
\hline
\end{array}
$$
which yields
$$
X_{\lambda ,\mu }^{A_{3}^{(1)}}(q)=q^{4}+q^{3}+q^{2}.
$$
In fact, in \Cref{thm_An-11}, in order to compute the corresponding Kostka-Foulkes polynomial,
we must have $|\la|=|\mu|$, which is not the case here,
but we can replace $\la$ by $\dot\la=(3,2,2,2)$ since $\dot\la-\la = 2.(1,1,1,1)$
so $\la$ and $\dot\la$ coincide as $\mathfrak{sl}_4$-weights.
One check that a direct computation of the Kostka-Foulkes polynomial gives
$$K_{{\dot\lambda}',\mu'}^{A_{3}}(q) = q^{4}+q^{3}+q^{2}.$$

\subsection{Type \texorpdfstring{$C_{n}^{(1)}$}{C{n}(1)}}

We now first illustrate \Cref{thm_Cn1} (for whom the computation is a bit lighter).
The energy function is given by the following table.

$$
\begin{array}{ll}
\hline
\text{Highest weight vertex} & \text{Energy}  
\\
\hline

{\scriptsize
\gyoung(<\overline{4}>,<\overline{3}>,<\overline{2}>)\otimes \gyoung(<2>,<3>,<4>)\otimes \gyoung(<2>,<\overline{2}>)\otimes \gyoung(<1>)
}
& 8
\\

{\scriptsize
\gyoung(<\overline{4}>,<\overline{3}>,<\overline{2}>)\otimes \gyoung(<3>,<4>,<\overline{1}>)\otimes \gyoung(<1>,<2>)\otimes \gyoung(<1>)
}
& 6
\\

{\scriptsize
\gyoung(<\overline{3}>,<\overline{2}>,<\overline{1}>)\otimes \gyoung(<1>,<2>,<3>)\otimes \gyoung(<2>,<\overline{2}>)\otimes \gyoung(<1>)
}
& 6
\\

{\scriptsize
\gyoung(<\overline{3}>,<\overline{2}>,<\overline{1}>)\otimes \gyoung(<1>,<4>,<\overline{4}>)\otimes \gyoung(<2>,<3>)\otimes \gyoung(<1>)
}
& 8
\\

{\scriptsize
\gyoung(<\overline{3}>,<\overline{2}>,<\overline{1}>)\otimes \gyoung(<3>,<4>,<\overline{4}>)\otimes \gyoung(<1>,<2>)\otimes \gyoung(<1>)
}
& 7
\\

{\scriptsize
\gyoung(<\overline{3}>,<\overline{2}>,<\overline{1}>)\otimes \gyoung(<2>,<3>,<\overline{2}>)\otimes \gyoung(<1>,<2>)\otimes \gyoung(<1>)
}
& 4
\\

{\scriptsize
\gyoung(<3>,<\overline{3}>,<\overline{2}>)\otimes \gyoung(<4>,<\overline{4}>,<\overline{3}>)\otimes \gyoung(<2>,<3>)\otimes \gyoung(<1>)
}
& 9
\\

{\scriptsize
\gyoung(<3>,<\overline{3}>,<\overline{2}>)\otimes \gyoung(<3>,<\overline{3}>,<\overline{1}>)\otimes \gyoung(<1>,<2>)\otimes \gyoung(<1>)
}
& 5
\\

{\scriptsize
\gyoung(<3>,<\overline{3}>,<\overline{2}>)\otimes \gyoung(<2>,<3>,<\overline{3}>)\otimes \gyoung(<2>,<\overline{2}>)\otimes \gyoung(<1>)
}
& 7
\vspace{5pt}
\\

\hline
\end{array}
$$
So we find
$$
X_{\lambda,\mu }^{C_{4}^{(1)}}(q)=q^{9}+2q^{8}+2q^{7}+2q^{6}+q^{5}+q^{4}=K_{\widehat{\lambda},\widehat{\mu}}^{C_{4}}(0,q^{2})\,,$$
where the Kostka-Foulkes polynomials are computed independently.

\subsection{Type \texorpdfstring{$A_{2n-1}^{(2)}$}{A{2n-1}(2)}}

We illustrate \Cref{thm_A2n-12} by
a similar computation, which gives
\begin{equation*}
X_{\lambda ,\mu
}^{A_{7}^{(2)}}(q)=q^{14}+q^{13}+2q^{12}+3q^{11}+4q^{10}+6q^{9}+7q^{8}+5q^{7}+4q^{6}+2q^{5}=K_{\widehat{\lambda},\widehat{\mu}}^{C_{4}}(q).
\end{equation*}

\subsection{Type \texorpdfstring{$A_{2n-1}^{(2,\dagger)}$}{A{2n-1}(2)}}

In type $A_{7}^{(2,\dagger )}$, we find the one-dimensional sum
\begin{equation*}
X_{\lambda ,\mu }^{D_{5}^{(2,\dagger
)}}(q)=q^{10}+q^{9}+2q^{8}+2q^{7}+2q^{6}+q^{5}+q^{4}=K_{\widehat{\lambda},\widehat{%
\mu}}^{D_{4}}(q).
\end{equation*}

\subsection{Type \texorpdfstring{$D_{n+1}^{(2)}$}{D{n+1}(2)}}

\begin{equation*}
\begin{aligned}
X_{\lambda ,\mu
}^{D_{5}^{(2)}}(q)&=q^{28}+2q^{26}+4q^{24}+8q^{22}+q^{21}+13q^{20}+q^{19}+19q^{18}+q^{17}+\\
&\qquad+24q^{16}+q^{15}+24q^{14}+19q^{12}+10q^{10}+3q^{8}\\
&=K_{\widehat{\lambda},\widehat{\mu}}^{B_{4}}(-q,q^{2}).
\end{aligned}
\end{equation*}

\subsection{Type \texorpdfstring{$A_{2n}^{(2)}$}{A{2n}(2)}}

In type $A_{8}^{(2)}$, we find the one-dimensional sum 
\begin{equation*}
\begin{aligned}
X_{\lambda ,\mu
}^{A_{8}^{(2)}}(q)&=q^{28}+2q^{26}+4q^{24}+8q^{22}+12q^{20}+19q^{18}+24q^{16}+24q^{14}+19q^{12}+10q^{10}+3q^{8}\\
&=K_{%
\widehat{\lambda}+(1/2)^{4},\widehat{\mu}+(1/2)^{4}}^{B_{4}}(q,q^{2}).
\end{aligned}
\end{equation*}

\subsection{Type \texorpdfstring{$A_{2n}^{(2,\dagger)}$}{A{2n}(2)}}

In type $A_{8}^{(2,\dagger )}$, we find the one-dimensional sum%
\begin{equation*}
X_{\lambda ,\mu }^{A_{8}^{(2,\dagger
)}}(q)=q^{10}+q^{9}+2q^{8}+2q^{7}+2q^{6}+q^{5}+q^{4}=K_{\widehat{\lambda}+(1/2)^{4},\widehat{\mu}+(1/2)^{4}}^{D_{4}}(q)
\end{equation*}

\section{Future works}\label{sec:final}

\begin{enumerate}
\item As mentioned briefly in the Introduction and explained in Section \ref{sec_nontwisted}, it is not possible to equate the one-dimensional sums associated with a tensor product of KR-crystals of type $B_{n}^{(1)}$ and $D_{n}^{(1)}$ with a generalized Kostka-Foulkes polynomials. We nevertheless think there are relevant extensions of the notion of Kostka-Foulkes polynomials (defined similarly from
alternating sums of suitable $q$-Kostant type partition functions) giving these
missing identifications.

\item The equalities illustrated in Table \ref{table} can be specialized at $q=1$ and
then give various Howe-type dualities. An interesting problem concerns the
generalization of the combinatorial Howe duality \cite{GGL} obtained in type $C_{n}$ which permits to get a charge statistic on King tableaux.\ 
More precisely, 
it would be interesting to have a combinatorial proof of the various Howe-type
dualities coming from the $q=1$ specialization of our results. As explained
in Remark~\ref{RemGGL}, transferring the energy statistic through this correspondence would
give a charge statistic on relevant combinatorial objects. For example, in type
$C_{n}^{(1)}$ the KR-column crystals are parametrized by the so-called
admissible columns. The duality described in Remark~\ref{RemGGL}, once restricted to the highest weight tensor products of such columns, gives a subset of King tableaux with a simple
combinatorial description, hence a combinatorial description of the
generalized Kostka-Foulkes polynomials $K_{\widehat{\lambda},\widehat{\mu}}^{C_{m}}(0,q)$.

\item Besides the study of the combinatorics mentioned above, we plan to continue developing the combinatorics of the quantum alcove model in~\cite{LNSSS,Lenart,LeSc} in the direction of the Kostka-Foulkes polynomials and the energy function, as suggested in the Introduction.
\end{enumerate}

% \bibliographystyle{plain}
% \bibliography{biblio}

\end{document}